 \newtheorem{thm}{Theorem}[section]
 \newtheorem{cor}[thm]{Corollary}
 \newtheorem{lem}[thm]{Lemma}
 \newtheorem{prop}[thm]{Proposition}
 \theoremstyle{definition}
 \newtheorem{defn}[thm]{Definition}
 \newtheorem{rem}[thm]{Remark}
 \numberwithin{equation}{section}
 \numberwithin{equation}{section}
\newcommand{\R}{{\mathbb R}}
\newcommand{\Z}{{\mathbb Z}}
\newcommand{\C}{{\mathbb C}}
\newcommand{\N}{{\mathbb N}}
\newcommand{\cD}{{\mathcal D}}
\newcommand{\cL}{{\mathcal L}}
\newcommand{\cF}{{\mathcal F}}
\newcommand{\cE}{{\mathcal E}}
\newcommand{\cC}{{\mathcal C}}
\newcommand{\cB}{{\mathcal B}}
\newcommand{\cM}{{\mathcal M}}
\newcommand{\cS}{{\mathcal S}}
\newcommand{\cO}{{\mathcal O}}
\newcommand{\su}{\subseteq}
\newcommand\proj{\mathop{\rm proj\,}}
\newcommand\ind{\mathop{\rm ind\,}}
\begin{document}

%
%
%
%---------------------------------------------------------------------------
%Insert here the title, affiliations and abstract:
%

\title[Convolutors on $\cS_{\omega}(\R^N)$ ]
 {Convolutors on $\cS_{\omega}(\R^N)$}

%----------Author 1
\author[A.A. Albanese, C. Mele]{Angela\,A. Albanese and Claudio Mele}

\address{ Angela A. Albanese\\
Dipartimento di Matematica e Fisica ``E. De Giorgi''\\
Universit\`a del Salento- C.P.193\\
I-73100 Lecce, Italy}
\email{angela.albanese@unisalento.it}

\address{Claudio Mele\\
	Dipartimento di Matematica e Fisica ``E. De Giorgi''\\
	Universit\`a del Salento- C.P.193\\
	I-73100 Lecce, Italy}
\email{claudio.mele1@unisalento.it}

\thanks{\textit{Mathematics Subject Classification 2020:}
Primary  46E10, 46F05; Secondary  47B38.}
%\thanks{\textsuperscript{*} Corresponding author}
\keywords{Convolutor, Multiplier,  weight function, ultradifferentiable function space, Fourier Transform. }

%----------classification, keywords, date
%\subjclass{Primary 47A10, 47B38, 47D06; Secondary 46A04, 46E10, 47A16, 47A35.}

%\keywords{Ces\`aro operator, Fr\'echet space of smooth functions, spectrum, mean ergodic operator, $C_0$-semigroup.}

%\date{26/05/2015}
%----------additions
%\dedicatory{To my boss}
%%% ----------------------------------------------------------------------

\begin{abstract}
In this paper we  continue the study of the 
spaces $\cO_{M,\omega}(\R^N)$ and $\cO_{C,\omega}(\R^N)$ undertaken in \cite{AC}. We determine new representations  of such spaces and we give some structure theorems for their dual spaces. Furthermore, we show that $\cO'_{C,\omega}(\R^N)$ is the space of convolutors of the  space $\cS_\omega(\R^N)$ of the $\omega$-ultradifferentiable rapidly decreasing functions of Beurling type (in the sense of Braun, Meise and Taylor) and of its dual space $\cS'_\omega(\R^N)$. We also establish  that the Fourier transform is 
 an isomorphism from $\cO'_{C,\omega}(\R^N)$
  onto $\cO_{M,\omega}(\R^N)$. In particular, we prove that this  isomorphism is  topological when the former space is endowed with the strong operator lc-topology induced by $\cL_b(\cS_\omega(\R^N))$ and the last space is endowed with its natural lc-topology.
  %%%%%%%%%
 % endowed with the strong operator lc-topology, onto $\cO_{M,\omega}(\R^N)$, when  endowed with its natural lc-topology.
% topological isomorphism from $\cO'_{C,\omega}(\R^N)$, endowed with the strong operator lc-topology, onto $\cO_{M,\omega}(\R^N)$, when  endowed with its natural lc-topology.
%The aim of this paper is to study the space $\cO'_{C,\omega}(\R^N)$ of the convolutors of the space $\cS_\omega(\R^N)$ of the $\omega$-ultradifferentiable rapidly decreasing functions of Beurling type.
%, i.e, the set of all functions $f\in C^\infty(\R^N)$ such that $fg\in\cS_\omega(\R^N)$ for every $g\in\cS_\omega(\R^N)$.
%We determine new characterizations of the spaces $\cO_{M,\omega}(\R^N)$ and $\cO_{C,\omega}(\R^N)$ and we give a structure theorem for the relative ultradistribution spaces. Moreover, we show the isomorphism between the multiplier space and convolutor space of $\cS_\omega(\R^N)$.
\end{abstract}

%%% ----------------------------------------------------------------------
\maketitle
%%% ----------------------------------------------------------------------
%\tableofcontents
\section{Introduction }\label{intro}

The study of the space $\cO_M(\R^N)$ of multipliers and  of the space $\cO_C(\R^N)$ of convolutors of the space
$\cS(\R^N)$ of rapidly decreasing functions was started by Schwartz \cite{S}. Since then, the
spaces $\cO_M(\R^N)$ and  $\cO_C(\R^N)$ attracted the attention of several authors, even recently, (see, f.i.,
\cite{BO,Gro,H,Ki,Ki1,Ku,Ku1,La,LaW} and the references therein). Their interest lies in the rich topological structure and in the importance of their application to the study of partial differential equations. In the case of ultradifferentiable classes of rapidly decreasing functions of Beurling or Roumieu type in the sense of Komatsu \cite{K} or in the frame of Gelfand-Shilov spaces,  the spaces of multipliers and of convolutors have been also considered and studied in the recent years (see, f.i., \cite{De2,DPV,DPPV,Kov,Kov-2,Pi,Pi2,Z,Z2}). Inspired by  this
line of research, in \cite{AC} the authors recently   introduced and studied the
space $\cO_{M,\omega}(\R^N)$ of the slowly increasing functions of Beurling type in the setting of
ultradifferentiable function space in the sense of \cite{BMT}, showing  there that it is the space of  multipliers of the space $\cS_\omega(\R^N)$ of  the $\omega$-ultradifferentiable rapidly decreasing functions of Beurling type, as introduced by
Björck \cite{B}. An analogous result for a more general classes of ultradifferentiable rapidly decreasing functions of Beurling type
or Roumieu type was  recently established also in \cite{De}.
 We point out that, in
general, the ultradifferentiable  classes defined in one way cannot be defined in the other way (see \cite{BMM}).

In this paper we  continue the study of the 
spaces $\cO_{M,\omega}(\R^N)$ and $\cO_{C,\omega}(\R^N)$ undertaken in \cite{AC}, where their elements were defined in terms of weighted $L^\infty$-norms. Our main aim is to show that  $\cO'_{C,\omega}(\R^N)$ is the space of convolutors of the  space $\cS_\omega(\R^N)$  and of its dual space $\cS'_\omega(\R^N)$ (see Section  5). To this end, Sections  3 and  4 are devoted to  establish all the necessary results. In particular, in Section  3 we  first prove that the elements of both spaces $\cO_{M,\omega}(\R^N)$ and $\cO_{C,\omega}(\R^N)$  can be also defined in terms of weighted $L^p$-norms and then  we give some structure theorems for their dual spaces. The characterization in terms of $L^p$-norms relies on an appropriate weighted Sobolev embedding theorem (Proposition \ref{PropMorrey}).  In Section  4 we introduce the weighted Fr\'echet spaces $\cD_{L^p_\mu,\omega}(\R^N)$, $1\leq p\leq\infty$, and study the convolution   operators on their duals, i.e., on the weighted spaces of ultradistributions $\cD'_{L^p_\mu,\omega}(\R^N)$ (of
Beurling type) of $L^{p'}$-growth, $p'$ being the conjugate exponent of $p$. Finally, in Section  6
we  establish that the Fourier transform is 
an isomorphism from $\cO'_{C,\omega}(\R^N)$
onto $\cO_{M,\omega}(\R^N)$. This isomorphim   is topological when the former space is endowed with the strong operator lc-topology induced by $\cL_b(\cS_\omega(\R^N))$ and the last space is endowed with its natural lc-topology.
 We point out that the methods of the proofs   are different from the ones used in \cite{De,De2}. 
Indeed,  in \cite{De,De2}  the proofs relies on  tools  from the time-frequency analysis as the short time Fourier transform (STFT).

\section{Preliminary}
	%Weight functions and the space $\mathcal{S}_{\omega}(\mathbb{R}^N)$}

We first give the definition of non-quasianalytic weight function in the sense of Braun-Meise-Taylor \cite{BMT} suitable
for the Beurling case, i.e., we also consider the logarithm as a weight function.

\begin{defn}\label{D.weight} A non-quasianalytic weight function is a continuous increasing function $\omega:[0,\infty)\to[0,\infty)$  satisfying the following properties:
	\begin{itemize}
		\item[($\alpha$)] there exists $K\geq1$ such that $\omega(2t)\leq K(1+\omega(t))$ for every $t\geq0$;
		\item[($\beta$)] $\int_{1}^{\infty} \frac{\omega(t)}{1+t^2}\, dt < \infty$;
		\item[($\gamma$)] there exist $a \in \mathbb{R}$, $b>0$ such that $\omega(t) \geq a+b\log(1+t)$, for every $t\geq0$;
		\item[($\delta$)] $\varphi_\omega(t)= \omega \circ \exp(t)$ is a convex function.
	\end{itemize}
\end{defn}

%Given a non-quasianalytic weight function $\omega$, we can extend it on $\C^N$ by setting
%$\omega(z)=\omega(|z|)$  for all $z \in \C^N$, where $\vert \cdot \vert$ denotes the standard euclidean norm.

\begin{rem}\label{wheight prop gamma'}\rm 
	We recall some known properties of the weights functions that shall be useful in the following; the proofs can be found in the literature. 
	
	Let $\omega$ be a non-quasianalytic weight function. Then the following properties are satisfied.
	
	(1) Condition $(\alpha)$ implies for every $t_1, t_2 \geq 0$ that
	\begin{equation}\label{sub}
	\omega(t_1+t_2)\leq K(1+\omega(t_1)+\omega(t_2)).
	\end{equation}
	Observe that this condition is weaker than subadditivity (i.e.,  $\omega(t_1+t_2)\leq \omega(t_1) + \omega(t_2))$. The weight functions satisfying ($\alpha$) are not necessarily subadditive in general.
	
	(2) Condition $(\alpha)$ implies that there exists $L\geq 1$ such that for every $t\geq 0$
	\begin{equation}\label{l}
	\omega(e t)\leq L(1+\omega(t)).
	\end{equation}
	
	%(3) By condition $(\beta)$ and the fact that $\omega$ is an increasing function, we have that $\omega(t) = o(t)$ as $t\to\infty$.
	%This can be deduced by the fact that for every $t>0$ we have
	%\begin{equation*}
	%\frac{\omega(t)}{t}=\int_{t}^{\infty} \frac{\omega(t)}{s^2}\, ds\leq \int_{t}^{\infty} \frac{\omega(s)}{s^2}\, ds\,.
	%\end{equation*}
	
	(3) By condition $(\gamma)$ we have  for every  $\lambda\geq \frac{N+1}{bp}$ that
	\begin{equation}\label{eq.Lpspazi}
	e^{-\lambda \omega(t)}\in L^p(\R^N).
	\end{equation}
\end{rem}

Given a non-quasianalytic weight function $\omega$, we define the Young conjugate $\varphi^*_\omega$ of $\varphi_\omega$ as the function $\varphi^*_\omega:[0,\infty)\to [0,\infty)$ by
\begin{equation}\label{Yconj}
\varphi^*_\omega(s):=\sup_{t\geq 0}\{ st-\varphi_\omega(t)\},\quad s\geq 0.
\end{equation}
There is no loss of generality to assume that $\omega$ vanishes on $[0,1]$. Moreover $\varphi^*_\omega$ is convex and increasing, $\varphi^*_\omega(0)=0$ and $(\varphi^*_\omega)^*=\varphi_\omega$. Further useful properties of $\varphi^*_\omega$  are listed in the following lemma, see \cite{BMT}.

\begin{lem}\label{L.Pfistar} Let $\omega\colon [0,\infty)\to [0,\infty)$ be a non-quasianalytic weight function. Then the following properties are satisfied.
	\begin{enumerate}
		%\item[\rm (1)] $\lim_{t\to\infty}\frac{t}{\varphi^*_\omega(t)}=0$.
		\item[\rm (1)] $\frac{\varphi^*_\omega(t)}{t}$ is an increasing function in $(0,\infty)$.
		%\item[\rm (3)] For every $s,t\geq 0$ and $\lambda>0$
		%\begin{equation}\label{eq.convessits}
	%	2\lambda\varphi^*_\omega\left(\frac{s+t}{2\lambda}\right)\leq \lambda \varphi^*_\omega\left(\frac {s}{\lambda}\right)+\lambda\varphi^*_\omega\left(\frac{t}{\lambda}\right) \leq \lambda\varphi^*_\omega\left(\frac{s+t}{\lambda}\right).
		%\end{equation}
		\item[\rm (2)] For every $s,t\geq 0$ and $\lambda>0$
		\begin{equation}\label{secondprop}
		2\lambda\varphi^*_\omega\left(\frac{s+t}{2\lambda}\right)\leq \lambda \varphi^*_\omega\left(\frac {s}{\lambda}\right)+\lambda\varphi^*_\omega\left(\frac{t}{\lambda}\right) \leq \lambda\varphi^*_\omega\left(\frac{s+t}{\lambda}\right).
		\end{equation}
		\item[\rm (3)] For every $t\geq 0$ and $\lambda>0$
		\begin{equation}\label{eq.bb}
		\lambda L \varphi^*_\omega\left(\frac{t}{\lambda L}\right)+t\leq \lambda \varphi^*_\omega\left(\frac{t}{\lambda}\right)+\lambda L,
		\end{equation}
		where $L\geq 1$ is the costant appearing in formula $(\ref{l})$.
		\item[\rm (4)] For every $m,M\in\mathbb{N}$ with $M\geq mL$, where $L$ is the constant appearing in formula $(\ref{l})$, and for every $t\geq 0$
		\begin{equation}\label{firstprop}
		2^{t}\exp\left(M\varphi^*_\omega\left(\frac{t}{M}\right)\right)\leq C \exp\left(m\varphi^*_\omega\left(\frac{t}{m}\right)\right),
		\end{equation}
		with $C:=e^{mL}$.
		\end{enumerate}
\end{lem}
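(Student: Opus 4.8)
The plan is to derive all four items from three ingredients only: the convexity of $\varphi^*_\omega$, the normalisation $\varphi^*_\omega(0)=0$ together with the duality $(\varphi^*_\omega)^*=\varphi_\omega$, and the growth estimate $(\ref{l})$. For item (1) I would invoke the elementary fact that a convex function $f$ on $[0,\infty)$ with $f(0)=0$ has increasing difference quotient $f(t)/t$: for $0<t_1<t_2$, writing $t_1=\tfrac{t_1}{t_2}\,t_2+\bigl(1-\tfrac{t_1}{t_2}\bigr)\cdot 0$ and using convexity with $\varphi^*_\omega(0)=0$ gives $\varphi^*_\omega(t_1)/t_1\le \varphi^*_\omega(t_2)/t_2$, which is exactly the assertion.

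For item (2), the left-hand inequality in $(\ref{secondprop})$ is just midpoint convexity of $\varphi^*_\omega$ applied to the points $s/\lambda$ and $t/\lambda$ and then multiplied by $2\lambda$. For the right-hand inequality I would use item (1): putting $a=s/\lambda$, $b=t/\lambda$ and exploiting that $u\mapsto\varphi^*_\omega(u)/u$ is increasing, one has $\varphi^*_\omega(a)\le a\,\varphi^*_\omega(a+b)/(a+b)$ and $\varphi^*_\omega(b)\le b\,\varphi^*_\omega(a+b)/(a+b)$; adding these and multiplying by $\lambda$ produces $\lambda\varphi^*_\omega(s/\lambda)+\lambda\varphi^*_\omega(t/\lambda)\le \lambda\varphi^*_\omega((s+t)/\lambda)$ (the degenerate cases $a=0$ or $b=0$ being trivial since $\varphi^*_\omega(0)=0$).

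Item (3) is the step I expect to be the main obstacle, as it is the only one genuinely using the growth condition $(\ref{l})$. The idea is to unwind the definition of the conjugate: by positive homogeneity of the supremum, $\lambda L\,\varphi^*_\omega(t/(\lambda L))=\sup_{\tau\ge0}\{t\tau-\lambda L\varphi_\omega(\tau)\}$ while $\lambda\,\varphi^*_\omega(t/\lambda)=\sup_{\sigma\ge0}\{t\sigma-\lambda\varphi_\omega(\sigma)\}$. Rewriting $(\ref{l})$ with $e^\tau$ in place of $t$ yields $\varphi_\omega(\tau+1)\le L(1+\varphi_\omega(\tau))$, i.e. $\lambda L\varphi_\omega(\tau)\ge\lambda\varphi_\omega(\tau+1)-\lambda L$, so that for every $\tau\ge0$
\[
t\tau-\lambda L\varphi_\omega(\tau)\le t(\tau+1)-\lambda\varphi_\omega(\tau+1)-t+\lambda L\le \lambda\varphi^*_\omega(t/\lambda)-t+\lambda L,
\]
where the last inequality is the substitution $\sigma=\tau+1$ followed by passing to the supremum over $\sigma$. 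Taking the supremum over $\tau\ge0$ on the left gives precisely $(\ref{eq.bb})$.

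Finally, for item (4) I would first observe that the map $\lambda\mapsto\lambda\varphi^*_\omega(t/\lambda)=t\,\dfrac{\varphi^*_\omega(t/\lambda)}{t/\lambda}$ is decreasing in $\lambda$, again by item (1); since $M\ge mL$ this forces $M\varphi^*_\omega(t/M)\le mL\,\varphi^*_\omega(t/(mL))$. Combining this with $(\ref{eq.bb})$ for $\lambda=m$ gives $M\varphi^*_\omega(t/M)+t\le m\varphi^*_\omega(t/m)+mL$. As $\log 2<1$ we have $t\log 2\le t$, whence $t\log 2+M\varphi^*_\omega(t/M)\le m\varphi^*_\omega(t/m)+mL$, and exponentiating yields $2^{t}\exp\bigl(M\varphi^*_\omega(t/M)\bigr)\le e^{mL}\exp\bigl(m\varphi^*_\omega(t/m)\bigr)$, which is $(\ref{firstprop})$ with $C=e^{mL}$.
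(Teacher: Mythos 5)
Your proof is correct in all four parts, and no gaps remain. For context: the paper itself does not prove this lemma at all---it quotes these properties from the reference [BMT]---so there is no in-paper argument to compare against, and your write-up supplies a self-contained derivation of exactly the standard kind. Items (1) and (2) follow, as you show, purely from the convexity of $\varphi^*_\omega$ and the normalisation $\varphi^*_\omega(0)=0$ (including your correct handling of the degenerate cases $s=0$ or $t=0$ in the right-hand inequality of $(\ref{secondprop})$). Item (3) is indeed the only place where the weight enters: your use of the homogeneity identity $\lambda\varphi^*_\omega\left(\frac{t}{\lambda}\right)=\sup_{\tau\geq 0}\{t\tau-\lambda\varphi_\omega(\tau)\}$ together with the translated form $\varphi_\omega(\tau+1)\leq L(1+\varphi_\omega(\tau))$ of $(\ref{l})$, followed by the shift $\sigma=\tau+1$ inside the supremum, is precisely the argument one finds in the literature, and every step checks out. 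Item (4) is then the correct combination of the monotonicity of $\lambda\mapsto\lambda\varphi^*_\omega\left(\frac{t}{\lambda}\right)$ (a consequence of item (1)), inequality $(\ref{eq.bb})$ at $\lambda=m$, and the elementary bound $t\log 2\leq t$, yielding the constant $C=e^{mL}$ as stated.
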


We now introduce the ultradifferentiable function spaces and their duals of Beurling type in the sense of Braun, Meise and Taylor \cite{BMT}. 

\begin{defn}\label{D.Beurling} Let $\omega$ be a non-quasianalytic weight.
	
	(a) For a compact subset $K$ of $\R^N$ and $\lambda >0$ define
	\[
	\cE_{\omega,\lambda}(K):=\left\{f\in C^\infty(K)\colon p_{K,\lambda}(f):=\sup_{x\in K}\sup_{\alpha\in\N_0^N}|\partial^{\alpha}f(x)|\exp\left(-\lambda \varphi^*_\omega\left(\frac{|\alpha|}{\lambda}\right)\right)<\infty\right\}.
	\]
	Then $(\cE_{\omega,\lambda}(K), p_{K,\lambda})$ is a Banach space.
	
	(b) For an open subset $\Omega$ of $\R^N$ define
	\[
	\cE_\omega(\Omega):=\left\{f\in C^\infty(\Omega)\colon p_{K,m}(f)<\infty\  \forall K \Subset \Omega,\, m\in\N  \right\}
	\]
	and endow it with its natural Fr\'echet space topology, i.e., with  the lc-topology generated by the system of seminorms $\{p_{K,m}\}_{K\Subset \Omega, m\in\N}$.
	The elements of $\cE_\omega(\Omega)$ are called \textit{$\omega$-ultradifferentiable functions
		of Beurling  type} on $\Omega$. The dual $\cE'_\omega(\Omega)$ of $\cE_\omega(\Omega)$ is endowed with its strong topology. 
	
	(c) For a compact subset $K$ of $\R^N$ define
	\[
	\cD_\omega(K):=\left\{f\in \cE_\omega(\R^N)\colon {\rm supp}\, f\su  K\right\}
	\]
	and endow it with the Fr\'echet space topology generated by the sequence $\{p_{K, m}\}_{m\in\N}$ of norms. For an open subset $\Omega$ of $\R^N$ define
	\[
	\cD_\omega(\Omega):=\ind_{j\rightarrow}\cD_{\omega}(K_j),
	\]
	where $\{K_j\}_{j\in\N}$ is any fundamental sequence of compact subsets of $\Omega$. The elements of $\cD_\omega(\Omega)$ are called \textit{test functions of Beurling type} on $\Omega$. The dual $\cD'_\omega(\Omega)$ of $\cD_\omega(\Omega)$ is endowed with its strong topology. The elements
	of $\cD'_\omega(\Omega)$ are called \textit{$\omega$-ultradistributions of Beurling  type} on $\Omega$.
	
	(d) We denote by  $\mathcal{S}_\omega(\mathbb{R}^N)$  the set of all functions $f\in L^1(\mathbb{R}^N)$ such that $f,\hat{f}\in C^\infty(\mathbb{R}^N)$ and for each $\lambda >0$ and $\alpha\in\N_0^N$ we have
	\begin{equation}\label{cond Sw 1}
	\| \exp(\lambda\omega)\partial^\alpha f\|_\infty <\infty\ \ {\rm and }\ \  
	\|\exp(\lambda\omega)\partial^\alpha\hat{f}\|_\infty <\infty \; ,
	\end{equation}
	where $\hat{f}$ denotes the Fourier transform of $f$. The elements of  $\mathcal{S}_\omega(\mathbb{R}^N)$ are called \textit{$\omega$-ultradifferentiable rapidly decreasing functions of Beurling type}. %\cite[Definition 1.8.1]{B}
	We denote by  $\cS'_{\omega}(\R^N)$ the dual of $\cS_{\omega}(\R^N)$  endowed with its strong topology.
\end{defn}
We refer to \cite{B,BMT} for the main properties of the spaces $\cE_\omega(\Omega)$, $\cD_\omega(\Omega)$ and $\cS_\omega(\R^N)$. In particular, we recall what follows.
%\begin{rem}\label{R.PspaziB} Let $\omega$ be a non-quasianalytic weight function. Then the following properties are satisfied.
	
%	(1) For every open subset $\Omega$ of $\R^N$ the space $\cE_\omega(\Omega)$ is a nuclear Fr\'echet space, see \cite[Proposition 4.9]{BMT}.
	
%	(2) For every compact subset $K$ of $\R^N$ we have $\cD_\omega(K)\not=\{0\}$ and $\cD_\omega(K)$ is a nuclear Fr\'echet space, see \cite[Remark 3.2 (1) and Corollary 3.6(2)]{BMT}.
	
%	(3) For every open subset $\Omega$ of $\R^N$ the inclusion $\cD_\omega(\Omega)\hookrightarrow \cE_\omega(\Omega)$ is continuous with dense range, see \cite[Proposition 4.7 (1)]{BMT}.
	
%	 (4) Let $K$ be a compact subset of $\R^N$. Then for every $m, m'\in \N$ with $m<m'$  the inclusion $\cE_{\omega,m'}(K)\hookrightarrow \cE_{\omega,m}(K)$ is compact, see \cite[Proposition 2.2]{K}. Moreover, for every $m\in\N$ there exists $m'>m$ such that the inclusion $\cE_{\omega,m'}(K)\hookrightarrow \cE_{\omega,m}(K)$ is nuclear, see \cite[Proposition 2.4]{K}.
	
%\end{rem}

%The space $\cS_\omega(\R^N)$ is a Fr\'echet space with respect to the lc-topology generated by the sequence of norms
%\[
%\| \exp(n\omega)\partial^\alpha f\|_\infty +\|\exp(n\omega)\partial^\alpha\hat{f}\|_\infty,\quad f\in \cS_\omega(\R^N),\ n\in\N.
%\]

\begin{rem}\label{in s} Let $\omega$ be a non-quasianalytic  weight function. 
	
	(1)	The condition $(\gamma)$ of Definition \ref{D.weight} implies that  $\mathcal{S}_\omega(\mathbb{R}^N)\su \mathcal{S}(\mathbb{R}^N)$  with continuous inclusion. 
	Accordingly,  we can rewrite the definition of $\mathcal{S}_\omega(\mathbb{R}^N)$ as the set of all the rapidly decreasing functions that satisfy the condition  $(\ref{cond Sw 1})$. 
	
		(2)	The space $\mathcal{S}_\omega(\mathbb{R}^N)$ is closed under convolution, under point-wise multiplication, translation and modulation, where the translation and modulation operators are defined by $\tau_y f(x):=f(x+y)$ and $M_t f(x):=e^{itx} f(x)$, respectively, where $t,x,y\in\mathbb{R}^N$  (\cite[Propositions 1.8.3 and 1.8.5]{B}).
	
	(3) The inclusions $\cD_\omega(\R^N)\hookrightarrow \cS_\omega(\R^N)\hookrightarrow \cE_\omega(\R^N)$ are continuous with dense range (\cite[Proposition 4.7.(1)]{BMT} and \cite[Propositions 1.8.6 and 1.8.7]{B}). Therefore, the inclusions $\mathcal{E}'_\omega(\mathbb{R}^N)\hookrightarrow\mathcal{S}'_\omega(\mathbb{R}^N)\hookrightarrow\mathcal{D}'_\omega(\mathbb{R}^N)$ are well defined and continuous.

(4)	The Fourier transform $\mathcal{F}:\mathcal{S}_{\omega}(\mathbb{R}^N)\to \mathcal{S}_{\omega}(\mathbb{R}^N)$ is a continuous isomorphism, that can be extended in the usual way to $\cS'_\omega(\R^N)$ (\cite[Proposition 1.8.2]{B}), i.e. $\mathcal{F}(T)(f):=\langle T,\hat{f}\rangle $ for every $f\in \cS_\omega(\R^N)$ and $T\in \cS'_\omega(\R^N)$. Moreover,  for every $f\in \cS_\omega(\R^N)$ and $T\in \cS'_\omega(\R^N)$ the convolution  $(T\star f)(x):=\langle T_y, \tau_x\check{f}\rangle$, for $x\in\R^N$, (where $\check{f}$ is the function $x\mapsto f(-x)$) is a well defined function on $\R^N$ such that  $T\star f \in \mathcal{S}'_\omega(\mathbb{R}^N)$, see \cite[Theorem 1.8.12]{B}, and 
	\begin{equation}\label{trf}
	\mathcal{F}(T\star f)=\hat{f}\mathcal{F}{T}.
	\end{equation}

	(5) The space $\cS_\omega(\R^N)$ is a nuclear Fr\'echet space, see, f.i., \cite[Theorem 3.3]{BJOS} or \cite[Theorem 1.1]{De1}.
\end{rem}

The space $\cS_\omega(\R^N)$ is a Fr\'echet space with different equivalent systems of seminorms. Indeed, the following result holds.

\begin{prop}\label{P.norme}
	Let $\omega$ be a non-quasianalytic weight function and consider $f \in \mathcal{S}(\mathbb{R}^N)$. Then $f\in\cS_\omega(\R^N)$ if and only if one of the following  conditions is satisfied.
	\begin{enumerate}	
		\item	\begin{enumerate}	
			\item[{\rm (i)} ] $\forall\lambda>0, \; \alpha\in\mathbb{N}^N_0, \; 1\leq p\leq \infty$ $\exists C_{\alpha,\lambda,p}>0$ such that 
			$\|\exp(\lambda\omega)\partial^\alpha f\|_p\leq C_{\alpha,\lambda,p}$, and
			
			\item[{\rm (ii)}] $\forall\lambda>0, \; \alpha\in\mathbb{N}^N_0,\; 1\leq p\leq \infty$  $\exists C_{\alpha,\lambda,p}>0$ such that 	
			$\| \exp(\lambda\omega)\partial^\alpha \hat{f}\|_p\leq C_{\alpha,\lambda,p} $.
		\end{enumerate}
		\item \begin{enumerate}
			\item[\rm (i)] $\forall \lambda>0,\; 1\leq p \leq \infty$ $\exists C_{\lambda,p}>0$ such that
			$\| \exp(\lambda\omega) f\|_p \leq C_{\lambda,p}$, and
			\item[\rm (ii)]	$\forall \lambda>0,\; 1\leq p \leq \infty$ $\exists C_{\lambda,p}>0$ such that
			$\|\exp(\lambda\omega)\hat{f}\|_p\leq C_{\lambda,p} $.
		\end{enumerate}
		\item $\forall\lambda,\;\mu>0,\; 1\leq p\leq \infty$ $\exists C_{\lambda,\mu,p}>0$ such that
		\[
		q_{\lambda,\mu,p}(f):=\underset{\alpha \in \mathbb{N}^N_0}{\sup}\| \exp(\mu\omega) \partial^\alpha f\|_p\exp\left(-\lambda\varphi^*_\omega\left(\frac{|\alpha|}{\lambda}\right)\right) \leq C_{\lambda,\mu,p} 	.\]
		\item $\forall\lambda,\;\mu>0,\; 1\leq p< \infty$ $\exists C_{\lambda,\mu,p}>0$ such that
		\[
		\sigma_{\lambda,\mu,p}(f):=\left(\sum_{\alpha\in\mathbb{N}^N_0}\|\exp(\mu\omega)\partial^\alpha f\|_p^p\exp\left(-p\lambda\varphi^*_\omega\left(\frac{|\alpha|}{\lambda}\right)\right)\right)^{\frac{1}{p}}\leq C_{\lambda,\mu,p}
			.\]
	\end{enumerate}
\end{prop}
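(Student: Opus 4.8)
The plan is to prove that $f\in\cS_\omega(\R^N)$ and each of (1)--(4) are equivalent by checking $(D)\Leftrightarrow(1)$, $(1)\Rightarrow(2)$, $(2)\Rightarrow(D)$, and $(D)\Leftrightarrow(3)\Leftrightarrow(4)$, where $(D)$ denotes the defining condition of Definition \ref{D.Beurling}(d); observe that $(D)$ is exactly condition (1) for $p=\infty$ (for $f$ and for $\hat f$). Two elementary devices drive the argument. The first is the pair of dual identities $\sup_{t\ge0}t^{\,j}e^{-\lambda\omega(t)}=\exp(\lambda\varphi^*_\omega(j/\lambda))$ and $\sup_{j\in\N_0}t^{\,j}\exp(-\lambda\varphi^*_\omega(j/\lambda))=e^{\lambda\omega(t)}$ for $t\ge1$, both immediate from the definition \eqref{Yconj} of the Young conjugate via the substitution $t=e^s$ (using that $\omega$ vanishes on $[0,1]$); these let me trade a monomial weight $|x|^{\,j}$ for the weight $e^{\lambda\omega}$, and dually trade a bound that is merely finite for each $\alpha$ for the $\varphi^*_\omega$-uniform bounds appearing in (3) and (4). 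The second device is \eqref{eq.Lpspazi}: since $e^{-\mu\omega}\in L^p(\R^N)$ for $\mu\ge(N+1)/(bp)$, H\"older's inequality gives $\|e^{\lambda\omega}\partial^\alpha g\|_p\le\|e^{-\mu\omega}\|_p\,\|e^{(\lambda+\mu)\omega}\partial^\alpha g\|_\infty$, so an $L^\infty$-bound for every $\lambda$ upgrades to an $L^p$-bound for every $p$; the reverse passage $L^p\to L^\infty$ I supply by a weighted local Sobolev embedding, estimating $\partial^\alpha g$ on a unit ball by the $L^p$-norms of $\partial^{\alpha+\gamma}g$ with $|\gamma|\le k$, $k>N/p$, and transferring the weight across the ball by $\omega(|x|)\le K(1+\omega(|y|)+\omega(1))$ for $|x-y|\le1$, which is \eqref{sub}.

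Granting these devices, several implications are routine. Applying the second device to both $f$ and $\hat f$ yields $(D)\Leftrightarrow(1)$, and $(1)\Rightarrow(2)$ is the restriction to $\alpha=0$. For $(3)\Leftrightarrow(4)$ the inequality $q_{\lambda,\mu,p}(f)\le\sigma_{\lambda,\mu,p}(f)$ is term-by-term, giving $(4)\Rightarrow(3)$; for the converse I bound $\|e^{\mu\omega}\partial^\alpha f\|_p$ using (3) with a larger parameter $\lambda'\ge\lambda L$ and invoke Lemma \ref{L.Pfistar}(4), whose factor $2^{|\alpha|}$ produces $\|e^{\mu\omega}\partial^\alpha f\|_p\,\exp(-\lambda\varphi^*_\omega(|\alpha|/\lambda))\le C\,2^{-|\alpha|}$, and since $\sum_\alpha 2^{-p|\alpha|}<\infty$ condition (4) follows.

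The heart of the proof is the equivalence $(D)\Leftrightarrow(3)$, since (3) constrains only $f$ yet must reproduce the decay of $\hat f$ and of all its derivatives. Here I use the integration-by-parts identity that $\xi^\gamma\partial^\beta\hat f$ equals, up to a unimodular constant, the Fourier transform of $\partial^\gamma(x^\beta f)$, so that $\|\xi^\gamma\partial^\beta\hat f\|_\infty\le c\,\|\partial^\gamma(x^\beta f)\|_1$. Expanding by the Leibniz rule and estimating each term $\||x|^{\,k}\partial^{\delta}f\|_1\le\exp(\mu\varphi^*_\omega(k/\mu))\,\|e^{\mu\omega}\partial^\delta f\|_1$ by the first device together with condition (3) at $p=1$, I obtain monomial-weighted bounds $\|\xi^\gamma\partial^\beta\hat f\|_\infty\le M_{\gamma,\beta}$ with explicit $\varphi^*_\omega$-growth in $\gamma$ and $\beta$. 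The dual identity then converts the whole family of monomial weights into the single weight $e^{\nu\omega}$: from $|\xi|^{2m}|\partial^\beta\hat f(\xi)|\le\sum_{|\gamma|=m}\binom{m}{\gamma}\,M_{2\gamma,\beta}$ and Lemma \ref{L.Pfistar} (to absorb the factors $\binom{m}{\gamma}$ and $N^m$) one gets $\|e^{\nu\omega}\partial^\beta\hat f\|_\infty<\infty$ for every $\nu,\beta$, after choosing the parameters in (3) large enough that the growth of $M_{\gamma,\beta}$ is slow enough to beat $e^{\nu\omega}$; this is the $\hat f$-part of $(D)$, while the $f$-part is (3) itself at $p=\infty$ (reached from finite $p$ by the Sobolev step). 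The reverse $(D)\Rightarrow(3)$ is the symmetric computation: Fourier inversion gives $\|x^\gamma\partial^\alpha f\|_\infty\le c\,\|\partial^\gamma(\xi^\alpha\hat f)\|_1$, controlled by the weighted decay of the derivatives of $\hat f$ furnished by $(D)$, and the first device again turns $x^\gamma$ into $e^{\mu\omega(x)}$ with the required $\varphi^*_\omega(|\alpha|/\lambda)$ rate.

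The one remaining and genuinely delicate implication is $(2)\Rightarrow(D)$, and I expect it to be the main obstacle. Unlike (3) or $(D)$, condition (2) controls $f$ and $\hat f$ themselves but none of their derivatives, so the one-pass estimate above does not close: bounding a derivative of $\hat f$ through $\widehat{\partial^\gamma(x^\beta f)}$ needs the weighted decay of the derivatives of $f$, and symmetrically, producing the decay of the derivatives of $f$ from (2) needs that of $\hat f$; the estimates are mutually circular. The clean resolution is that the derivative-free weighted $L^1$- (equivalently $L^\infty$-) decay of both $f$ and $\hat f$ already characterizes $\cS_\omega(\R^N)$: this is the Beurling--Bj\"orck symmetric description, in which the $\omega$-decay of $\hat f$ encodes the $\omega$-ultradifferentiability of $f$ and conversely, and it is contained in the properties of $\cS_\omega(\R^N)$ recalled from \cite{B} (cf. Remark \ref{in s}). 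Thus $(2)\Rightarrow(D)$ can be read off from \cite{B}; alternatively one runs the coupled Fourier estimates and closes them by a contraction in the parameters $\lambda,\mu$, exploiting that (2) is assumed for all $\lambda$. Apart from this point, the only step demanding care is making the passage between monomial and $\omega$-weights in the first device quantitative and uniform in $\alpha$, which is precisely what forces the $\varphi^*_\omega$-form of (3) and (4).
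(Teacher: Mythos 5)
Your reduction steps are sound where they are genuinely elementary: $(D)\Leftrightarrow(1)$ (H\"older with $e^{-\mu\omega}\in L^p$ in one direction, the trivial restriction to $p=\infty$ in the other), $(1)\Rightarrow(2)$, and $(3)\Leftrightarrow(4)$ --- the latter being exactly the part the paper proves by hand via \eqref{eq.bb}; your use of Lemma \ref{L.Pfistar}(4) is the same estimate in a slightly different dress. The problem is that all the real content of the proposition is concentrated in the passage from the qualitative conditions --- $(D)$, $(1)$, $(2)$, whose constants $C_{\alpha,\lambda,p}$ may depend on $\alpha$ arbitrarily --- to the quantitative condition $(3)$, which demands uniformity in $\alpha$ at the precise rate $\exp\left(\lambda\varphi^*_\omega\left(\frac{|\alpha|}{\lambda}\right)\right)$, and your proposal never actually proves this passage. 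Your sketch of $(D)\Rightarrow(3)$ fails for exactly the reason you yourself identify when discussing $(2)\Rightarrow(D)$: writing $x^\gamma\partial^\alpha f=c\,\mathcal{F}^{-1}[\partial^\gamma(\xi^\alpha\hat f)]$ and invoking $(D)$ for the factors $\partial^{\gamma-\delta}\hat f$ brings in constants $C_{\gamma-\delta,\lambda}$ with uncontrolled dependence on $\gamma-\delta$, and the supremum over $\gamma$ that is supposed to convert monomials into $e^{\mu\omega(x)}$ then cannot be closed. So the circularity is not confined to $(2)\Rightarrow(D)$; it infects $(D)\Rightarrow(3)$ as well, and with it your whole chain.

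Your proposed escape --- that the derivative-free characterization ``is contained in the properties of $\cS_\omega(\R^N)$ recalled from \cite{B} (cf.\ Remark \ref{in s})'' --- is not correct. Bj\"orck's definition of $\cS_\omega(\R^N)$ \emph{is} condition $(D)$, with derivative conditions on both $f$ and $\hat f$; neither \cite{B} nor Remark \ref{in s} contains the statement that the derivative-free decay of $f$ and $\hat f$ (condition (2)) already suffices. That equivalence is a substantially later theorem (a Chung--Chung--Kim-type result transplanted to the $\omega$-setting), and it is precisely what the paper cites \cite[Theorem 4.8]{BJOR} and \cite[Theorem 2.6]{BJO} for. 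Its known proofs require either a Landau--Kolmogorov/Gorny interpolation inequality --- trading the decay of $f$ on unit balls against the uniform derivative bounds $\|\partial^\beta f\|_\infty\leq C\exp\left(\lambda\varphi^*_\omega\left(\frac{|\beta|}{\lambda}\right)\right)$ that do follow from $\|e^{\lambda\omega}\hat f\|_1<\infty$ alone --- or short-time Fourier transform methods; the ``contraction in the parameters $\lambda,\mu$'' you gesture at is not an argument. As written, your proof establishes only the easy equivalences and re-derives the paper's $(3)\Leftrightarrow(4)$; the analytic core of the proposition is missing, and would be repaired either by citing the correct references (as the paper does) or by supplying the interpolation argument in full.
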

\begin{proof} For a proof  of (1)$\Leftrightarrow$(2)$\Leftrightarrow$(3) we refer to \cite[Theorem 4.8]{BJOR} and \cite[Theorem 2.6]{BJO}.

	(3)$\Rightarrow$(4). Fix $\lambda, \mu>0$. Then by \eqref{eq.bb}
	\begin{align*}
	(\sigma_{\lambda,\mu,p}(f))^p&=\sum_{\alpha\in\mathbb{N}^N_0}\|\exp(\mu\omega)\partial^\alpha f\|_p^p\exp\left(-p\lambda\varphi^*_\omega\left(\frac{|\alpha|}{\lambda}\right)\right)\\
	&\leq\sum_{\alpha\in\mathbb{N}^N_0}\|\exp(\mu\omega)\partial^\alpha f\|_p^p\exp\left(-pL\lambda\varphi^*_\omega\left(\frac{|\alpha|}{L\lambda}\right)\right)\exp(-p|\alpha|+pL\lambda)\\
	&= \exp(pL\lambda)(q_{L\lambda,\mu,p}(f))^p\sum_{\alpha\in\mathbb{N}^N_0}\exp(-p|\alpha|),
	\end{align*}
	where $\sum_{\alpha\in\mathbb{N}^N_0}\exp(-p|\alpha|)<\infty$. So,
	\begin{equation*}
	\sigma_{\lambda,\mu,p}(f)\leq \exp(L\lambda)\left(\sum_{\alpha\in\mathbb{N}^N_0}\exp(-p|\alpha|)\right)^{\frac{1}{p}}q_{L\lambda,\mu,p}(f)<\infty.
	\end{equation*}
	
	(4)$\Rightarrow$(3). Fix $\lambda,\mu>0$. Then for every $\alpha\in\mathbb{N}^N_0$
	\begin{equation*}
	\|\exp(\mu\omega)\partial^\alpha f\|_p\exp\left(-\lambda\varphi^*_\omega\left(\frac{|\alpha|}{\lambda}\right)\right)\leq \sigma_{\lambda,\mu,p}(f).
	\end{equation*}
	Accordingly, $q_{\lambda,\mu,p}(f)\leq \sigma_{\lambda,\mu,p}(f)<\infty$. 
\end{proof}

\begin{rem}\label{R.Normenegativo} We observe that the inequalities
	\[
\sigma_{\lambda,\mu,p}(f)\leq 	\exp(L\lambda)\left(\sum_{\alpha\in\mathbb{N}^N_0}\exp(-p|\alpha|)\right)^{\frac{1}{p}}q_{L\lambda,\mu,p}(f) \ {\rm and \ }\ q_{\lambda,\mu,p}(f)\leq \sigma_{\lambda,\mu,p}(f)
	\]
	continue to hold in the case $\mu\in\R$ and their proofs are similar.
	%We observe that the assumption $f\in \cS(\R^N)$ in Proposition \ref{P.norme} can be replaced by the weaker assumption  $f\in C^\infty(\R^N)$.  
%	Indeed, the condition $(\gamma)$ in Definition \ref{D.weight} implies for every $x\in\R^N$ and $\alpha\in\N_0^N$ that 
	\[
%	|x^\alpha|\leq \exp\left(-\frac{|\alpha|}{b}\right) \exp\left(\frac{|\alpha|}{b}\omega(x)\right),
	\]
%	where $b$ is the constant appearing in condition $(\gamma)$. Therefore, if one (and hence all) of the equivalent conditions (1)$\div$(3) of Proposition \ref{P.norme} is satisfied, then for every $\alpha,\beta\in\N_0^N$ and $1\leq p\leq \infty$ we have
%	\[
%	\|x^\alpha\partial^\beta f\|_p\leq \exp\left(-\frac{|\alpha|}{b}\right) \left\|\exp\left(\frac{|\alpha|}{b}\omega\right)\partial^\beta f\right\|_p<\infty.
%	\]
%	Accordingly, $f\in \cS(\R^N)$.
\end{rem}

Since $\cS_\omega(\R^N)$ is a Fr\'echet space, Proposition \ref{P.norme} implies that the sequences of norms
\begin{equation*}
 \{\sigma_{m,m,p}\}_{m\in\mathbb{N}} \ (1\leq p<\infty) \quad \text{and}\quad \{q_{m,m,p}\}_{m\in\mathbb{N}}\ (1\leq p\leq \infty),
\end{equation*}
 define  the same lc-topology on $\mathcal{S}_{\omega}(\mathbb{R}^N)$. In the following,  we will often use this  system of norms generating the Fr\'echet topology of $\cS_\omega(\R^N)$
\begin{equation}\label{eq.usualnorms}
q_{\lambda,\mu}(f):=q_{\lambda,\mu,\infty}(f), \ \lambda,\mu>0, \ f\in 	\cS_\omega(\R^N),
\end{equation}
or equivalently, the sequence of norms $\{q_{m,m}\}_{m\in\mathbb{N}}$.

%We define the convolution between $\mathcal{S}'_\omega(\mathbb{R}^N)$ and $\mathcal{S}_\omega(\mathbb{R}^N)$.

%\begin{defn}
%	Let $\omega$ be a non-quasianalytic weight function. Given $T \in \mathcal{S}'_\omega(\mathbb{R}^N)$ and $f \in \mathcal{S}_\omega(\mathbb{R}^N)$, we define $T\star f$ as the function such that $(T \star f) (x)= \langle T_y, \tau_x f\rangle$, for each $x \in \mathbb{R}^N$.
%\end{defn}
%Finally, we recall these important properties. For the proof see \cite{B}.
%\begin{enumerate}
%		\item $\mathcal{E}'_\omega(\mathbb{R}^N)\subset\mathcal{S}'_\omega(\mathbb{R}^N)\subset\mathcal{D}'_\omega(\mathbb{R}^N)$.
	%	\item Given $f\in \mathcal{S}_\omega(\mathbb{R}^N)$ and $T \in \mathcal{S}'_\omega(\mathbb{R}^N)$, we have $T\star f \in \mathcal{S}'_\omega(\mathbb{R}^N)$ and 
	%	\begin{equation}\label{trf}
	%		\mathcal{F}(T\star f)=\hat{f}\mathcal{F}{T}.
	%	\end{equation}
%	\end{enumerate}

\section{The spaces $\cO_{M,\omega}(\R^N)$ and $\cO_{C,\omega}(\R^N)$ and their duals}

The elements of the spaces $\cO_{M,\omega}(\R^N)$ and $\cO_{C,\omega}(\R^N)$ have been defined in \cite{AC} in terms of weighted $L^\infty$-norms. The main aim of 
this section is to show that the  elements of
such spaces can be also defined in  terms of weighted $L^p$-norms, with $1\leq p<\infty$. This characterization allows to easily show  some structure theorems for  
their strong dual spaces.  In order to do this, we begin by recalling the definition and some basic properties of the spaces 
$\cO_{M,\omega}(\R^N)$ and $\cO_{C,\omega}(\R^N)$ given in \cite{AC}. 

\begin{defn}\label{D.spaziO}
	Let $\omega$ be a non-quasianalytic weight function. 
	
	(a)  For $m\in \mathbb{N}$ and $n\in \mathbb{Z}$ we define the space $\mathcal{O}^m_{n,\omega}(\mathbb{R}^N)$ as the set of all functions $f\in C^\infty(\mathbb{R}^N)$ satisfying the following condition:
	\begin{equation}\label{omn}
	r_{m,n}(f):= \underset{\alpha\in \mathbb{N}^N_0}{\sup}\underset{x\in\mathbb{R}^N}{\sup}\, |\partial^\alpha f(x)|\exp\left(-n\omega(x)-m\varphi^*_\omega\left(\frac{|\alpha|}{m}\right)\right)<\infty.
	\end{equation}	
	The space $(\mathcal{O}^m_{n,\omega}(\mathbb{R}^N), r_{m,n})$ is a Banach space.
	
	(b)	We denote by $\mathcal{O}_{M,\omega}(\mathbb{R}^N)$  the set of all functions $f\in {C}^\infty(\mathbb{R}^N)$ such that for each $m\in \mathbb{N}$ there exist $C>0$ and $n \in\mathbb{N}$ such that for every $\alpha \in \mathbb{N}^N_0$ and $x\in \mathbb{R}^N$ we have
	\begin{equation}\label{m}
	|\partial^\alpha f(x)|\leq C\exp \left(n\omega(x)+m\varphi^*_\omega\left(\frac{|\alpha|}{m}\right)\right);		
	\end{equation}
	or equivalently,
	\begin{equation}\label{eq.OM}
	\cO_{M,\omega}(\R^N):=\bigcap_{m=1}^{\infty}\bigcup_{n=1}^\infty \mathcal{O}^m_{n,\omega}(\mathbb{R}^N).
	\end{equation}
	The elements of  $\mathcal{O}_{M,\omega}(\mathbb{R}^N)$ are called \textit{slowly increasing functions of Beurling type}. The space $\mathcal{O}_{M,\omega}(\mathbb{R}^N)$ is endowed with its natural lc-topology $t$, i.e., $\cO_{M,\omega}(\R^N)=\proj_{\stackrel{\leftarrow}{m}}\,\ind_{\stackrel{\rightarrow}{n}}\, \mathcal{O}^m_{n,\omega}(\mathbb{R}^N)$ is a projective limit of (LB)-spaces.
	
	(c)	We denote by  $\cO_{C,\omega}(\mathbb{R}^N)$  the set of all functions $f\in C^\infty(\R^N)$ for which there exists $n\in \mathbb{N}$ such that for every $m \in\mathbb{N}$ there exists $C>0$ so that for every $\alpha \in \mathbb{N}^N_0$ and $x\in \mathbb{R}^N$ we have
	\begin{equation}\label{c}
	|\partial^\alpha f(x)|\leq C\exp \left(n\omega(x)+m\varphi^*_\omega\left(\frac{|\alpha|}{m}\right)\right);		
	\end{equation}
	or equivalently,
	\begin{equation}\label{eq.OC}
	\cO_{C,\omega}(\R^N):=\bigcup_{n=1}^{\infty}\bigcap_{m=1}^\infty \mathcal{O}^m_{n,\omega}(\mathbb{R}^N).
	\end{equation}
	The elements of $\mathcal{O}_{C,\omega}(\mathbb{R}^N)$ are called  \textit{very slowly increasing functions of Beurling type}. The space $\cO_{C,\omega}(\R^N)$ is endowed with its natural lc-topology, i.e., $\cO_{C,\omega}(\R^N)=\ind_{\stackrel{\rightarrow}{n}}\,\proj_{\stackrel{\leftarrow}{m}}\, \mathcal{O}^m_{n,\omega}(\mathbb{R}^N)$ is an (LF)-space.
\end{defn}

We point out that condition $(\gamma)$ in Definition \ref{D.weight} implies that $\cO_{M,\omega}(\R^N)\su \cO_M(\R^N)$ ( $\cO_{C,\omega}(\R^N)\su \cO_C(\R^N)$, resp.) with continuous inclusion,  where $\cO_M(\R^N)$  ($\cO_C(\R^N)$, resp.) denotes the space of multipliers  (of convolutors, resp.)  of $\cS(\R^N)$.

 The space $\mathcal{O}_{M,\omega}(\mathbb{R}^N)$ is the space of multipliers of $\cS_\omega(\R^N)$ and of $\cS'_\omega(\R^N)$ as proved in \cite{AC}. In particular, in \cite[Theorem 5.3]{De} it is shown that $\mathcal{O}_{M,\omega}(\mathbb{R}^N)$ is an ultrabornological space.
 
On the other hand, by \cite[Theorems 4.4, 4.6 and 5.1]{AC} the following result holds.

 \begin{thm}\label{T.Multiplier} Let $\omega$ be a non-quasianalytic weight function and $f\in C^\infty(\R^N)$. Then the following properties are equivalent.
 	\begin{enumerate}
 		\item $f\in \cO_{M,\omega}(\R^N)$.
 		\item For every $g\in \cS_\omega(\R^N)$ and $m\in\N$ we have
 		\begin{equation}\label{eq.nuovenorme}
 		q_{m,g}(f):=\sup_{\alpha\in \N_0^N}\sup_{x\in \R^N}|g(x)||\partial^\alpha f(x)|<\infty.
 		\end{equation}
 		\item  For every $g\in \cS_\omega(\R^N)$ we have  $fg\in \cS_{\omega}(\R^N)$.
 		\item For every $T\in \mathcal{S}'_\omega(\mathbb{R}^N)$ we have $fT \in \mathcal{S}'_\omega(\mathbb{R}^N)$.  
 	\end{enumerate}
 	Moreover, if $f\in\cO_{M,\omega}(\R^N)$, then the linear operators $M_f: \mathcal{S}_{\omega}(\mathbb{R}^N)\to\mathcal{S}_{\omega}(\mathbb{R}^N)$ defined by $M_f(g):=fg$, for  $g\in\mathcal{S}_{\omega}(\mathbb{R}^N)$, and $\cM_f\colon \mathcal{S}'_\omega(\mathbb{R}^N)\to  \mathcal{S}'_\omega(\mathbb{R}^N)$ defined by $\cM_f(T):=fT$, for $T\in \mathcal{S}'_\omega(\mathbb{R}^N)$, are continuous.
 \end{thm}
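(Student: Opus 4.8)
The plan is to prove the four-way equivalence by establishing a cycle of implications (1)$\Rightarrow$(3)$\Rightarrow$(2)$\Rightarrow$(1) together with the separate equivalence (3)$\Leftrightarrow$(4) obtained by duality, and then to read off the continuity of $M_f$ and $\cM_f$ from the quantitative estimates produced along the way. The central technical object will be the system of norms $q_{\lambda,\mu}$ from \eqref{eq.usualnorms}, and the main computational engine will be the Leibniz rule combined with the convexity inequalities \eqref{secondprop} and \eqref{eq.bb} for $\varphi^*_\omega$.

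First I would prove (1)$\Rightarrow$(3). Given $f\in\cO_{M,\omega}(\R^N)$ and $g\in\cS_\omega(\R^N)$, I would estimate a typical norm $q_{\lambda,\mu}(fg)$ of the product. Applying the Leibniz rule to $\partial^\alpha(fg)=\sum_{\beta\leq\alpha}\binom{\alpha}{\beta}\partial^\beta f\,\partial^{\alpha-\beta}g$, I would bound $|\partial^\beta f(x)|$ by the multiplier estimate \eqref{m} (absorbing its weight $e^{n\omega(x)}$ into a lower-order weight on $g$, using that $g$ decays faster than any $e^{-N\omega}$) and $|\partial^{\alpha-\beta}g(x)|e^{\mu\omega(x)}$ by a seminorm of $g$ times $\exp\bigl(\lambda'\varphi^*_\omega(|\alpha-\beta|/\lambda')\bigr)$. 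The key point is to recombine the two factors $\exp\bigl(m\varphi^*_\omega(|\beta|/m)\bigr)$ and $\exp\bigl(\lambda'\varphi^*_\omega(|\alpha-\beta|/\lambda')\bigr)$ into a single factor $\exp\bigl(\lambda\varphi^*_\omega(|\alpha|/\lambda)\bigr)$; this is exactly what the right-hand inequality in \eqref{secondprop} delivers after choosing the parameters compatibly, while the binomial factor $\binom{\alpha}{\beta}\leq 2^{|\alpha|}$ is absorbed using \eqref{firstprop}. This shows $fg\in\cS_\omega(\R^N)$ with a continuous dependence on $g$, which is precisely the continuity of $M_f$.

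The implication (3)$\Rightarrow$(2) is immediate by specializing: if $fg\in\cS_\omega(\R^N)$ then in particular it is bounded, and choosing test functions appropriately one reads off that $q_{m,g}(f)$ is finite for each fixed $g$. The hard direction, and the main obstacle, is (2)$\Rightarrow$(1): from the hypothesis that every $g\in\cS_\omega(\R^N)$ yields the \emph{family} of pointwise bounds $\sup_{\alpha,x}|g(x)||\partial^\alpha f(x)|<\infty$, one must recover the \emph{structured} estimate \eqref{m} with the explicit weight $\exp\bigl(n\omega(x)+m\varphi^*_\omega(|\alpha|/m)\bigr)$. My approach here would be a closed-graph / uniform-boundedness argument: fix $m\in\N$ and consider, for each multi-index $\alpha$, the functional $g\mapsto \sup_x |g(x)|\,|\partial^\alpha f(x)|\exp\bigl(-m\varphi^*_\omega(|\alpha|/m)\bigr)$; hypothesis (2) makes the associated family of linear maps from $\cS_\omega(\R^N)$ into a suitable weighted sup-space pointwise bounded, and since $\cS_\omega(\R^N)$ is a Fr\'echet space (Remark \ref{in s}(5)) the Banach--Steinhaus theorem yields equicontinuity, i.e. a single seminorm estimate $\sup_{\alpha,x}|g(x)||\partial^\alpha f(x)|\exp(-m\varphi^*_\omega(|\alpha|/m))\leq C\, q_{\lambda_0,\mu_0}(g)$ valid uniformly. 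Testing this against a well-chosen family of functions $g$ that saturate the weight $e^{-n\omega}$ near each point $x$ (for instance suitable translates and dilates of a fixed bump in $\cD_\omega$, whose $q_{\lambda_0,\mu_0}$-norm is controlled by $e^{C\omega(x)}$) then converts the bound on $|g(x)||\partial^\alpha f(x)|$ into the desired growth estimate \eqref{m}, with $n$ determined by $\mu_0$ and the subadditivity-type inequality \eqref{sub}.

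Finally, for (3)$\Leftrightarrow$(4) I would argue by duality. If $fg\in\cS_\omega(\R^N)$ for all $g$, then for $T\in\cS'_\omega(\R^N)$ one defines $\langle fT,g\rangle:=\langle T,fg\rangle$; the continuity of $M_f$ established above shows this is a well-defined element of $\cS'_\omega(\R^N)$ and that $\cM_f$ is the transpose of $M_f$, hence continuous (and weak$^*$-weak$^*$ continuous, with strong continuity following since $M_f$ is continuous on the Fr\'echet space $\cS_\omega$). Conversely, (4)$\Rightarrow$(3) follows by embedding $\cS_\omega(\R^N)\hookrightarrow\cS'_\omega(\R^N)$ (via Remark \ref{in s}) and noting that $fg$, a priori an ultradistribution, is in fact the smooth function $fg$, which must therefore lie in $\cS_\omega(\R^N)$ by a regularity/bootstrap comparison of the two structures. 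The continuity statements for $M_f$ and $\cM_f$ are then exactly the equicontinuity estimates gathered in the proofs of (1)$\Rightarrow$(3) and the duality step.
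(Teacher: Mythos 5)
A preliminary remark on the comparison itself: this paper does not prove Theorem \ref{T.Multiplier} at all; it is quoted from \cite[Theorems 4.4, 4.6 and 5.1]{AC}. So your proposal can only be assessed on its own merits. Two of your implications are essentially sound: (1)$\Rightarrow$(3) by the Leibniz rule, absorbing the weight $e^{n\omega}$ from \eqref{m} into the decay of $g$ and recombining the exponential factors via \eqref{secondprop} and \eqref{firstprop} (this also yields the estimate $q_{\lambda,\mu}(fg)\leq C\,q_{m,\mu+n_m}(g)$, i.e.\ the continuity of $M_f$); and (2)$\Rightarrow$(1) by Banach--Steinhaus plus translated bumps, where $q_{\lambda_0,\mu_0}(\tau_{-x_0}\phi)\leq e^{\mu_0K(1+\omega(x_0))}q_{\lambda_0,\mu_0K}(\phi)$ follows from \eqref{sub} (a cosmetic point: apply Banach--Steinhaus to the \emph{linear} point-evaluation functionals $g\mapsto g(x)\partial^\alpha f(x)e^{-m\varphi^*_\omega(|\alpha|/m)}$ indexed by pairs $(x,\alpha)$, not to the sup over $x$, which is not linear; also note that you are tacitly using the intended definition of $q_{m,g}$, which carries the factor $e^{-m\varphi^*_\omega(|\alpha|/m)}$ omitted by a typo in \eqref{eq.nuovenorme}).

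However, there are two genuine gaps, and they break the logical cycle. First, (3)$\Rightarrow$(2) is \emph{not} ``immediate by specializing''. Hypothesis (3) controls derivatives of the products $fh$, $h\in\cS_\omega(\R^N)$, whereas \eqref{eq.nuovenorme} asks for bounds on $g\,\partial^\alpha f$, the test function times a \emph{high derivative} of $f$; boundedness of $fg$ gives no information about $g\,\partial^\alpha f$. To pass from one to the other one needs, for instance, the inverted Leibniz identity $g\,\partial^\alpha f=\sum_{\beta\leq\alpha}(-1)^{|\beta|}\binom{\alpha}{\beta}\partial^{\alpha-\beta}\bigl(f\,\partial^\beta g\bigr)$ together with a quantitative, uniform-in-$h$ version of (3), which in turn requires the closed graph theorem to establish the continuity of $M_f$ \emph{before} (1) is available. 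Since this is the only arrow you draw out of (3), your cycle (1)$\Rightarrow$(3)$\Rightarrow$(2)$\Rightarrow$(1) collapses: what you have actually proved is (1)$\Rightarrow$(3) and (2)$\Rightarrow$(1), which does not make (3) equivalent to the rest. Second, your argument for (4)$\Rightarrow$(3) is not a proof: from $fg\in\cS'_\omega(\R^N)$ and $fg\in C^\infty(\R^N)$ one \emph{cannot} conclude $fg\in\cS_\omega(\R^N)$ by any ``regularity/bootstrap comparison'' --- the constant function $1$, and more generally every element of $\cO_{M,\omega}(\R^N)$, is a smooth function lying in $\cS'_\omega(\R^N)$ (Proposition \ref{P.duali-Inc}) but not in $\cS_\omega(\R^N)$. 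Membership in $\cS'_\omega$ is a growth condition, membership in $\cS_\omega$ a decay condition, and no regularity argument bridges the two. A correct route is structural: show that $\cM_f\colon\cS'_\omega(\R^N)\to\cS'_\omega(\R^N)$ has closed graph and apply the closed graph theorem (legitimate since $\cS'_\omega(\R^N)$, the strong dual of a nuclear Fr\'echet space, is ultrabornological and webbed), then use the reflexivity of $\cS_\omega(\R^N)$ to identify the transpose of $\cM_f$ with $g\mapsto fg$ on $\cS_\omega(\R^N)$, which forces $fg\in\cS_\omega(\R^N)$. Until (3)$\Rightarrow$(2) (or (3)$\Rightarrow$(1)) and (4)$\Rightarrow$(3) are repaired along these lines, the four-way equivalence is not established.
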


The set $\{q_{m,g}\}_{m\in\N,g\in \cS_\omega(\R^N)}$ defines a complete Hausdorff lc-topology $\tau$ on $\mathcal{O}_{M,\omega}(\mathbb{R}^N)$ weaker than $t$ (\cite[Theorem 5.2(2)]{AC}). Actually, by combining \cite[Proposition 5.6 and Theorem 5.9]{AC} with \cite[Theorem 5.2]{De} it follows that $t=\tau$. So, the lc-topology  $t$ is described by means of $\{q_{m,g}\}_{m\in\N,g\in \cS_\omega(\R^N)}$. Moreover, by \cite[Theorems 3.8, 3.9 and 5.2(1)]{AC}  the following relationship between the spaces 	$\mathcal{O}_{C,\omega}(\mathbb{R}^N)$ and $\mathcal{O}_{M,\omega}(\mathbb{R}^N)$ holds.
 
\begin{thm}\label{T.incl} 	Let $\omega$ be a non-quasianalytic weight function. 
%	\begin{enumerate}
	%	\item The inclusion
	%	\begin{equation}\label{eq.inclOO}
	%	\mathcal{O}_{C,\omega}(\mathbb{R}^N) \hookrightarrow \mathcal{O}_{M,\omega}(\mathbb{R}^N)
	%	\end{equation}
	%	is well-defined and continuous.	
	%	\item 
		Then the  inclusions
	    \begin{equation}\label{eq.incl_comp}
    	\mathcal{D}_{\omega}(\mathbb{R}^N) \hookrightarrow \mathcal{S}_{\omega}(\mathbb{R}^N) \hookrightarrow \mathcal{O}_ {C,\omega}(\mathbb{R}^N)\hookrightarrow \mathcal{O}_ {M,\omega}(\mathbb{R}^N)\hookrightarrow \mathcal{E}_ {\omega}(\mathbb{R}^N)
    	\end{equation}
	    are well-defined, continuous and with dense range.
\end{thm}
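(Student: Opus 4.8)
The plan is to handle the four inclusions in turn, observing at the outset that the leftmost one, $\cD_\omega(\R^N)\hookrightarrow\cS_\omega(\R^N)$, is already furnished by Remark \ref{in s}(3). For well-definedness and continuity of the remaining three I would work directly with the projective/inductive descriptions $\cO_{C,\omega}(\R^N)=\ind_n\bigcap_m\cO^m_{n,\omega}(\R^N)$ and $\cO_{M,\omega}(\R^N)=\proj_m\ind_n\cO^m_{n,\omega}(\R^N)$ of Definition \ref{D.spaziO}. For $\cS_\omega(\R^N)\hookrightarrow\cO_{C,\omega}(\R^N)$ one has $r_{m,1}(f)\le q_{m,m}(f)$ for every $m$ (the weight $e^{-\omega}$ in $r_{m,1}$ is dominated by the weight $e^{m\omega}$ in $q_{m,m}$), so $\cS_\omega(\R^N)$ embeds continuously into the single Fr\'echet step $\bigcap_m\cO^m_{1,\omega}(\R^N)$ and thence into $\cO_{C,\omega}(\R^N)$. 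The inclusion $\cO_{C,\omega}(\R^N)\subseteq\cO_{M,\omega}(\R^N)$ is the elementary set relation $\bigcup_n\bigcap_m\subseteq\bigcap_m\bigcup_n$, and its continuity reduces, by the universal properties of the two limits, to the continuity of each composite $\bigcap_m\cO^m_{n,\omega}\to\cO^m_{n,\omega}\to\ind_{n'}\cO^m_{n',\omega}$. Finally, $p_{K,m}(f)\le C_{K,n}\,r_{m,n}(f)$ on every $\cO^m_{n,\omega}(\R^N)$, since $e^{n\omega}$ is bounded on the compact $K$; this gives the continuity of $\cO_{M,\omega}(\R^N)\hookrightarrow\cE_\omega(\R^N)$.

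For the density assertions I would reduce all of them to the single statement that $\cD_\omega(\R^N)$ is dense in each of $\cO_{C,\omega}(\R^N)$ and $\cO_{M,\omega}(\R^N)$ (its density in $\cE_\omega(\R^N)$ being part of Remark \ref{in s}(3)). Indeed, from the chain $\cD_\omega(\R^N)\subseteq\cS_\omega(\R^N)\subseteq\cO_{C,\omega}(\R^N)\subseteq\cO_{M,\omega}(\R^N)$, density of the smallest space in the larger ones propagates to every intermediate inclusion. To approximate a given $f$ I would use the rescaled cut-offs $\chi_k(x):=\chi(x/k)$, with $\chi\in\cD_\omega(\R^N)$ equal to $1$ near the origin; then $|\partial^\beta\chi_k(x)|\le|\partial^\beta\chi(x/k)|$ for $k\ge1$ (so the derivatives stay bounded uniformly in $k$), while $1-\chi_k$ is supported in a region $\{|x|\ge ck\}$ expanding to infinity. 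Each product $\chi_k f$ lies in $\cD_\omega(\R^N)$, and the task is to prove $\chi_k f\to f$ in the respective space.

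The heart of the matter is the estimate of $r_{m,n}\big((1-\chi_k)f\big)$ after a Leibniz expansion. The term carrying no derivative on the cut-off is supported in $\{|x|\ge ck\}$ and, using $f\in\cO^{M}_{n_0,\omega}(\R^N)$ with $M\ge m$, is bounded by $r_{M,n_0}(f)\,e^{(n_0-n)\omega(x)}$; taking $n=n_0+1$ makes this at most $r_{M,n_0}(f)\,e^{-\omega(ck)}\to0$. For the terms that differentiate $\chi_k$, I would distribute the Young weight by \eqref{secondprop}, in the form $M\varphi^*_\omega(|\beta|/M)+M\varphi^*_\omega(|\alpha-\beta|/M)\le M\varphi^*_\omega(|\alpha|/M)$, and then absorb the combinatorial factor $2^{|\alpha|}=\sum_\beta\binom{\alpha}{\beta}$ via \eqref{firstprop} (legitimate once $M\ge mL$), which replaces $2^{|\alpha|}e^{M\varphi^*_\omega(|\alpha|/M)}$ by $C\,e^{m\varphi^*_\omega(|\alpha|/m)}$ and so cancels the weight built into $r_{m,n}$; as these terms too are supported in $\{|x|\ge c'k\}$, the residual factor $e^{(n_0-n)\omega}$ sends them to $0$. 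Running this for each fixed $m$---with $M=mL$ and $n=n_0+1$, where $n_0$ is an $\omega$-index with $f\in\cO^{M}_{n_0,\omega}(\R^N)$ (uniform in $m$ when $f\in\cO_{C,\omega}(\R^N)$, possibly $m$-dependent when $f\in\cO_{M,\omega}(\R^N)$)---yields $r_{m,n}\big((1-\chi_k)f\big)\to0$, hence convergence in a fixed step and therefore in $\cO_{C,\omega}(\R^N)$ and $\cO_{M,\omega}(\R^N)$. I expect this Leibniz estimate to be the principal obstacle: one must simultaneously align the Young-conjugate indices so that \eqref{secondprop} applies, absorb the binomial factor through \eqref{firstprop}, and exploit the one-unit gap in the $\omega$-index to extract decay---all uniformly in $\alpha$.
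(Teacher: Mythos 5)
Your proof is correct, but there is no internal argument to measure it against: this paper does not prove Theorem \ref{T.incl} at all, it imports the statement from the companion paper \cite{AC} (Theorems 3.8, 3.9 and 5.2(1) there). Judged on its own, your argument holds up. The continuity claims are the standard universal-property reductions for $\proj$/$\ind$ limits, and the seminorm estimates behind them ($r_{m,1}\le q_{m,m}$ since $\omega\ge0$; $p_{K,m}\le C_{K,n}\,r_{m,n}$ since $e^{n\omega}$ is bounded on $K$) are valid; note only that your bound for the underived Leibniz term also uses $M\varphi^*_\omega(|\alpha|/M)\le m\varphi^*_\omega(|\alpha|/m)$ for $M\ge m$, i.e.\ Lemma \ref{L.Pfistar}(1), which deserves to be cited explicitly. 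Your density scheme --- dilated cut-offs $\chi_k$, Leibniz splitting, superadditivity \eqref{secondprop} to recombine the Young conjugates, \eqref{firstprop} with $M\ge mL$ to absorb the binomial factor $2^{|\alpha|}$, and the index gap $n=n_0+1$ to extract the decay $e^{-\omega(ck)}$ from the support of $1-\chi_k$ --- is precisely the technique the authors themselves deploy for the analogous density statement in Proposition \ref{propdlpn}(3), where the cut-off is $\phi(\epsilon x)$ and the decay comes instead from the vanishing tails of $\|e^{\mu\omega}\partial^\alpha f\|_p$; since elements of $\cO_{C,\omega}(\R^N)$ and $\cO_{M,\omega}(\R^N)$ need not decay at all, your index-gap mechanism is exactly the right substitute. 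The quantifier bookkeeping ($n_0$ uniform in $m$ for $\cO_{C,\omega}(\R^N)$, $m$-dependent for $\cO_{M,\omega}(\R^N)$, with convergence in a single step sufficing for convergence in the inductive, resp.\ projective-of-inductive, topology) is also handled correctly. The one step you should not leave implicit is the claim $\chi_k f\in\cD_\omega(\R^N)$: it requires that the product of an $\cE_\omega$-function with a $\cD_\omega$-function again lies in $\cE_\omega$, which follows from the very Leibniz-plus-\eqref{secondprop}/\eqref{firstprop} computation you already run, restricted to $\mathrm{supp}\,\chi_k$; alternatively one could invoke Theorem \ref{T.Multiplier}(3), but that theorem is itself only quoted from \cite{AC}, so a self-contained write-up should take the first route.
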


Denoted by $\cO'_{C,\omega}(\R^N)$ ($\cO'_{M,\omega}(\R^N)$, resp.) the strong dual space of  $\cO_{C,\omega}(\R^N)$ ($\cO_{M,\omega}(\R^N)$, resp.),  Theorem \ref{T.incl} implies that the inclusions
	\begin{equation}\label{eq.inc.duali}
		\mathcal{E}'_{\omega}(\mathbb{R}^N) \rightarrow \mathcal{O}'_ {M,\omega}(\mathbb{R}^N)\hookrightarrow \mathcal{O}'_ {C,\omega}(\mathbb{R}^N) \rightarrow  \mathcal{S}'_{\omega}(\mathbb{R}^N)\rightarrow \mathcal{D}'_ {\omega}(\mathbb{R}^N)  
	\end{equation}
are also well-defined and continuous. Moreover, the following result holds.

\begin{prop}\label{P.duali-Inc} Let $\omega $ be a non-quasianalytic weight function. Then the following properties hold.
	\begin{enumerate}
		\item $\cO_{M,\omega}(\R^N)\hookrightarrow \cS'_\omega(\R^N)$ continuously.
		\item $\cO_{C,\omega}(\R^N)\hookrightarrow \cS'_\omega(\R^N)$ continuously.
	\end{enumerate}
\end{prop}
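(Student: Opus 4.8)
The plan is to realize both inclusions concretely as the assignment $f\mapsto T_f$, where $T_f$ is the functional $g\mapsto \int_{\R^N} f(x)g(x)\,dx$ on $\cS_\omega(\R^N)$, and to prove statement (1) first; statement (2) will then follow at once from the continuous inclusion $\cO_{C,\omega}(\R^N)\hookrightarrow \cO_{M,\omega}(\R^N)$ of Theorem \ref{T.incl}, because the embedding of $\cO_{C,\omega}(\R^N)$ into $\cS'_\omega(\R^N)$ is simply the restriction of that of $\cO_{M,\omega}(\R^N)$ (so continuity is a composition and injectivity is inherited).

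First I would record the pointwise growth bound. Taking $\alpha=0$ in \eqref{m} (recall $\varphi^*_\omega(0)=0$) shows that every $f\in\cO_{M,\omega}(\R^N)$ satisfies $|f(x)|\le C\exp(n\omega(x))$ for suitable $n\in\N$, $C>0$; more precisely $\sup_{x}|f(x)|e^{-n\omega(x)}\le r_{m,n}(f)$ for every $m$. Fixing $\lambda_0:=\tfrac{N+1}{b}$ so that $e^{-\lambda_0\omega}\in L^1(\R^N)$ by Remark \ref{wheight prop gamma'}(3), one has for each $g\in\cS_\omega(\R^N)$
\[
\int_{\R^N}|f g|\,dx \;\le\; \Big(\sup_x|f|e^{-n\omega}\Big)\,\|e^{-\lambda_0\omega}\|_1\,\|e^{(n+\lambda_0)\omega}g\|_\infty \;\le\; C'\,q_{\lambda,\,n+\lambda_0}(g)
\]
for any $\lambda>0$. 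Hence $T_f$ is a well-defined element of $\cS'_\omega(\R^N)$, and $f\mapsto T_f$ is plainly linear. Injectivity follows because $T_f=0$ forces $\int f\varphi\,dx=0$ for all $\varphi\in\cD_\omega(\R^N)$, whence $f\equiv0$ by continuity of $f$.

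Next I would verify continuity for the strong topology of $\cS'_\omega(\R^N)$, i.e.\ bound $\sup_{g\in B}|T_f(g)|$ by a continuous seminorm of $f$ for each bounded set $B\subseteq\cS_\omega(\R^N)$. Boundedness of $B$ gives $A_\mu:=\sup_{g\in B}\|e^{\mu\omega}g\|_\infty<\infty$ for every $\mu>0$, so $\sup_{g\in B}|g(x)|\le A_\mu e^{-\mu\omega(x)}$. Consequently, for $f\in\cO^m_{n,\omega}(\R^N)$ with $m$ fixed (say $m=1$) and $\mu=n+\lambda_0$,
\[
\sup_{g\in B}|T_f(g)| \;\le\; \int_{\R^N}|f|\,\sup_{g\in B}|g|\,dx \;\le\; A_{n+\lambda_0}\int_{\R^N}|f|e^{-(n+\lambda_0)\omega}\,dx \;\le\; A_{n+\lambda_0}\,\|e^{-\lambda_0\omega}\|_1\,r_{m,n}(f).
\]
Thus $f\mapsto\sup_{g\in B}|T_f(g)|$ is dominated by a constant multiple of $r_{m,n}$ on each Banach step $\cO^m_{n,\omega}(\R^N)$, hence is a continuous seminorm on the (LB)-space $\ind_n\cO^m_{n,\omega}(\R^N)$, and therefore on $\cO_{M,\omega}(\R^N)=\proj_m\ind_n\cO^m_{n,\omega}(\R^N)$ since it factors through the step $m=1$. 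This establishes the continuity of the embedding in (1).

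The point requiring care is the interplay with the projective–inductive topology of $\cO_{M,\omega}(\R^N)$: the $f$-dependence of the growth index $n$ must be handled at the level of the individual Banach steps $\cO^m_{n,\omega}(\R^N)$ rather than globally, and the uniform-over-$B$ constants $A_\mu$ must be seen to be finite for \emph{all} $\mu>0$, which is precisely boundedness of $B$ in $\cS_\omega(\R^N)$. Once the estimate is organized step-by-step in $n$, no single global growth rate for the elements of $\cO_{M,\omega}(\R^N)$ is needed, and the resulting seminorm is continuous on the projective limit.
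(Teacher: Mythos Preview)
Your proof is correct and follows essentially the same approach as the paper: both realize the embedding via $f\mapsto T_f$, establish the estimate $|T_f(g)|\le r_{m,n}(f)\cdot(\text{seminorm of }g)$ on each Banach step $\cO^m_{n,\omega}(\R^N)$, and then pass to the inductive limit in $n$ and the projective limit in $m$. The only cosmetic difference is that the paper bounds $\int|fg|$ directly by $r_{m,n}(f)\,\sigma_{m,n,1}(g)$ using the $L^1$-type seminorm on $\cS_\omega(\R^N)$, whereas you split off an integrable weight $e^{-\lambda_0\omega}$ and use the $L^\infty$-type seminorm $q_{\lambda,n+\lambda_0}$ instead; your additional remarks on injectivity and on the projective--inductive structure are more explicit than in the paper but add nothing new.
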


\begin{proof} In view of Theorem \ref{T.incl} it suffices to prove only the first statement. To see this, we first show  for every  $m, n\in\N$ that the inclusion $\cO_{n,\omega}^m(\R^N)\hookrightarrow \cS'_\omega(\R^N)$ is continuous. Indeed, fixed $m,n\in\N$ and $f\in \cO_{n,\omega}^m(\R^N)$ and denoted by $T_f$ the linear functional defined as $T_f(g):=\int_{\R^N}f(x)g(x)\,dx$, for $g\in\ \cS_\omega(\R^N)$, we have for every $g\in\cS_\omega(\R^N)$ that 
	\begin{equation}\label{eq.Lin}
	|T_f(g)|\leq \int_{\R^N}|f(x)g(x)|\,dx\leq r_{m,n}(f)\sigma_{m,n,1}(g).
	\end{equation}
This means that $T_f\in \cS'_\omega(\R^N)$, i.e., $f\in \cS'_\omega(\R^N)$. Fixed a bounded subset of $\cS_{\omega}(\R^N)$, from \eqref{eq.Lin} it  follows  that 
\[
\sup_{g\in B}	|T_f(g)|\leq \sup_{g\in B}\sigma_{m,n,1}(g) r_{m,n}(f),
\]
where $\sup_{g\in B}\sigma_{m,n,1}(g)<\infty $. Hence, as $f$ is arbitrary, we can conclude  that the inclusion $\cO_{n,\omega}^m(\R^N)\hookrightarrow \cS'_\omega(\R^N)$ is continuous.  

Now,  the fact that the inclusion $\cO_{n,\omega}^m(\R^N)\hookrightarrow \cS'_\omega(\R^N)$ is continuous for every $m,n\in\N$ yields that the inclusion 
$
\bigcup_{n=1}^\infty\cO_{n,\omega}^m(\R^N)
\hookrightarrow \cS'_\omega(\R^N)
$
is also continuous for every $m\in\N$ and hence, the inclusion $\cO_{M,\omega}(\R^N)\hookrightarrow \cS'_\omega(\R^N)$ is continuous.
\end{proof}

In order to characterize the elements of the spaces	  $\mathcal{O}_{M,\omega}(\mathbb{R}^N)$ and $\mathcal{O}_{C,\omega}(\mathbb{R}^N)$ in terms of weighted $L^p$-norms, we first observe the following fact.

\begin{prop}\label{newdefomoc}
	Let $\omega$ be a non-quasianalytic weight function. Then the following properties hold.
	\begin{enumerate}
			\item $f\in \mathcal{O}_{M,\omega}(\mathbb{R}^N)$ if and only if $f\in C^\infty(\mathbb{R}^N)$ and for each $m\in\mathbb{N}$ there exist  $C>0$ and  $n\in \mathbb{N}$ such that for every $\alpha\in\mathbb{N}^N_0$ and $x\in\mathbb{R}^N$ we have  
		\begin{equation}\label{eq.nuoM}
		|\partial^\alpha f(x)|\leq C\exp \left(n\omega(x)+m\varphi^*_\omega\left(\frac{|\alpha|}{m}\right)-|\alpha|\right).	
		\end{equation}
		\item $f\in \mathcal{O}_{C,\omega}(\mathbb{R}^N)$ if and only if $f\in C^\infty(\mathbb{R}^N)$ and there exists $n\in \mathbb{N}$ such that for every $m\in\mathbb{N}$ there exists $C>0$ so that for every $\alpha\in\mathbb{N}^N_0$ and $x\in\mathbb{R}^N$ we have
		\begin{equation}\label{ndoc}
		|\partial^\alpha f(x)|\leq C\exp \left(n\omega(x)+m\varphi^*_\omega\left(\frac{|\alpha|}{m}\right)-|\alpha|\right).	
		\end{equation}
	\end{enumerate}
\end{prop}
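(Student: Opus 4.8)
The plan is to notice that the estimates \eqref{eq.nuoM} and \eqref{ndoc} differ from the original defining estimates \eqref{m} and \eqref{c} only by the extra factor $\exp(-|\alpha|)$. Consequently one implication in each equivalence is trivial, and the content of the proposition is entirely in showing that this factor can be absorbed into the index of $\varphi^*_\omega$. First I would dispose of the easy directions: if $f$ satisfies \eqref{eq.nuoM} (resp. \eqref{ndoc}), then since $\exp(-|\alpha|)\leq 1$ for every $\alpha\in\N_0^N$, deleting this factor only enlarges the right-hand side and yields \eqref{m} (resp. \eqref{c}), so $f\in\cO_{M,\omega}(\R^N)$ (resp. $f\in\cO_{C,\omega}(\R^N)$).

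For the converse the central tool is \eqref{eq.bb} of Lemma \ref{L.Pfistar}(3). Taking $\lambda=m$ and $t=|\alpha|$ there and rearranging gives
\[
mL\,\varphi^*_\omega\Bigl(\tfrac{|\alpha|}{mL}\Bigr)\leq m\,\varphi^*_\omega\Bigl(\tfrac{|\alpha|}{m}\Bigr)-|\alpha|+mL,
\]
which is exactly the inequality needed to trade the index $mL$ for the index $m$ at the price of the term $-|\alpha|$ together with a harmless additive constant. To feed an arbitrary (integer) index into the left-hand side I would first record, using that $\varphi^*_\omega(s)/s$ is increasing (Lemma \ref{L.Pfistar}(1)), that the map $\lambda\mapsto\lambda\varphi^*_\omega(|\alpha|/\lambda)$ is \emph{decreasing}: writing $s=|\alpha|/\lambda$ one has $\lambda\varphi^*_\omega(|\alpha|/\lambda)=|\alpha|\,\varphi^*_\omega(s)/s$, and increasing $\lambda$ decreases $s$. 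Hence for any integer $M\geq mL$ one has $M\varphi^*_\omega(|\alpha|/M)\leq mL\,\varphi^*_\omega(|\alpha|/(mL))$.

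Now fix $m\in\N$ and assume $f\in\cO_{M,\omega}(\R^N)$, so \eqref{m} holds for every index. Applying it with the integer $M:=\lceil mL\rceil\geq mL$ produces $C>0$ and $n\in\N$ with $|\partial^\alpha f(x)|\leq C\exp\bigl(n\omega(x)+M\varphi^*_\omega(|\alpha|/M)\bigr)$, and chaining the two displayed estimates gives
\[
M\varphi^*_\omega\Bigl(\tfrac{|\alpha|}{M}\Bigr)\leq m\varphi^*_\omega\Bigl(\tfrac{|\alpha|}{m}\Bigr)-|\alpha|+mL,
\]
so \eqref{eq.nuoM} follows with the same $n$ and with $C$ replaced by $Ce^{mL}$ (the case $\alpha=0$ being trivial since $\varphi^*_\omega(0)=0$). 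For part (2) the argument is verbatim identical; the only point to monitor is the order of quantifiers, namely that in \eqref{c} the integer $n$ is chosen once for all indices, and since the construction above leaves $n$ untouched, that same $n$ serves in \eqref{ndoc}. I expect no genuine obstacle: the lone subtlety is the possible non-integrality of $mL$, which is precisely why I pass to the ceiling $\lceil mL\rceil$ and invoke the monotonicity of $\lambda\mapsto\lambda\varphi^*_\omega(|\alpha|/\lambda)$ to reduce back to the index $mL$ appearing in \eqref{eq.bb}.
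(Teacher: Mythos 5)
Your proposal is correct and follows essentially the same route as the paper's proof: the trivial direction by dropping the factor $\exp(-|\alpha|)$, and for the converse an application of the defining estimate at an integer index $m'\geq Lm$, combined with the monotonicity of $\lambda\mapsto\lambda\varphi^*_\omega(t/\lambda)$ (from Lemma \ref{L.Pfistar}(1)) and inequality \eqref{eq.bb}, at the cost of the constant $e^{mL}$. The only cosmetic difference is that you spell out part (1) and note part (2) is identical with the quantifier on $n$ preserved, whereas the paper writes out part (2) and declares part (1) similar.
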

\begin{proof}
	We prove only the assertion (2). The proof of the assertion (1) is similar with obviuos changes regarding quantifiers.

Suppose that the necessary condition is satisfied.	Since $m\varphi^*_\omega\left(\frac{|\alpha|}{m}\right)-|\alpha|\leq m\varphi^*_\omega\left(\frac{|\alpha|}{m}\right)$ for every $\alpha\in\N_0^N$ and $m\in\N$, the inequality in $(\ref{c})$ holds.

 Conversely, suppose that $f\in \cO_{C,\omega}(\R^N)$. Then there is $n\in\mathbb{N}$ such that the inequality in  $(\ref{c})$ is satisfied for every $m\in\mathbb{N}$ with $C>0$ depending on $m$. So, fixed $m\in\mathbb{N}$ and choosen $m'\in\mathbb{N}$ so that $m'\geq Lm$, where $L\geq 1$ is the constant appearing in \eqref{l}, there is $C>0$ so that for every $\alpha\in\mathbb{N}^N_0$ and $x\in\mathbb{R}^N$ we have
	\begin{equation*}
	|\partial^\alpha f(x)|\leq C\exp \left(n\omega(x)+m'\varphi^*_\omega\left(\frac{|\alpha|}{m'}\right)\right).			\end{equation*}
	Therefore, taking into account that $\frac{\varphi^*_\omega(t)}{t}$ is an increasing function in $(0,\infty)$ and applying inequality ($\ref{eq.bb}$), it follows  for every $\alpha\in\mathbb{N}^N_0$ and $x\in\mathbb{R}^N$ that
	\begin{align*}
	|\partial^\alpha f(x)|&\leq C\exp \left(n\omega(x)+m'\varphi^*_\omega\left(\frac{|\alpha|}{m'}\right)\right)\leq  C\exp \left(n\omega(x)+	Lm\varphi^*_\omega\left(\frac{|\alpha|}{Lm}\right)\right)\\&\leq C\exp(Lm)\exp \left(n\omega(x)+	m\varphi^*_\omega\left(\frac{|\alpha|}{m}\right)-|\alpha|\right).
	\end{align*}
	This completes the proof.
\end{proof}

\begin{rem}\label{normap}
Suppose that the function $f\in C^\infty(\mathbb{R}^N)$ satisfies the condition
	\begin{equation*}
	|\partial^\alpha f(x)|\leq C\exp \left(n\omega(x)+m\varphi^*_\omega\left(\frac{|\alpha|}{m}\right)-|\alpha|\right),		\end{equation*}
	for every $\alpha\in\mathbb{N}^N_0$, $x\in\mathbb{R}^N$ and for some $n,m\in\mathbb{N}$.  Then, for a fixed $1\leq p<\infty$, the function $f$ clearly satisfies the condition
	\begin{equation*}
	\exp(-(n+n_0)\omega(x))|\partial^\alpha f(x)|\leq C\exp \left(-n_0\omega(x)+m\varphi^*_\omega\left(\frac{|\alpha|}{m}\right)-|\alpha|\right),	\end{equation*}
	for every $\alpha\in\mathbb{N}^N_0$  and $x\in\mathbb{R}^N$ and for $n_0:= \left[\frac{N+1}{bp}\right]+1>\frac{N+1}{bp}$, where $b$ is the constant appearing in condition ($\gamma$). Since $\exp(-n_0\omega)\in L^p(\R^N)$ by \eqref{eq.Lpspazi}, it follows for every $\alpha\in\mathbb{N}^N_0$ that 
	\begin{equation*}
	\|\exp(-(n+n_0)\omega)\partial^\alpha f\|_p\leq C\|\exp \left(-n_0\omega\right)\|_p\exp\left(m\varphi^*_\omega\left(\frac{|\alpha|}{m}\right)-|\alpha|\right).		
	\end{equation*}
		Accordingly, we obtain that
	\begin{equation*}
	\sum_{\alpha\in\mathbb{N}^N_0} \|\exp(-(n+n_0)\omega)\partial^\alpha f\|_p^p\exp\left(-pm\varphi^*_\omega\left(\frac{|\alpha|}{m}\right)\right)\leq (CD)^p\sum_{\alpha\in\mathbb{N}^N_0}\exp(-p|\alpha|)<\infty,
	\end{equation*}
	where $D:=\|\exp\left(-n_0\omega\right)\|_p$.	
\end{rem}

%Inspired by Proposition \ref{newdefomoc} and Remark \ref{normap} 
In view of Remark \ref{normap} above it is natural to introduce the  following spaces of $C^\infty$ functions on $\R^N$.
 
\begin{defn}\label{D.spaziOp}
	Let $\omega$ be a non-quasianalytic weight function and $1\leq p<\infty$.
	 For   $m\in \mathbb{N}$ and $n\in \mathbb{Z}$ we define the space $\mathcal{O}^m_{n,\omega,p}(\mathbb{R}^N)$ as the set of all functions $f\in C^\infty(\mathbb{R}^N)$ satisfying the following condition:
	\begin{equation}\label{omnp}
	r^p_{m,n,p}(f):= \sum_{\alpha\in\mathbb{N}^N_0} \|\exp(-n\omega)\partial^\alpha f\|_p^p\exp\left(-mp\varphi^*_\omega\left(\frac{|\alpha|}{m}\right)\right)<\infty.
	\end{equation}
	The space $(\cO_{n,\omega,p}^m(\R^N),r_{m,n,p})$ is a Banach space.
	\end{defn}

We observe that 
 for every $n, n'\in \mathbb{Z}$ with $n\leq n'$ and $m\in \mathbb{N}$, the inclusion
	$
	\mathcal{O}^m_{n,\omega,p}(\mathbb{R}^N)\hookrightarrow\mathcal{O}^m_{n',\omega,p}(\mathbb{R}^N)$
		is continuous and that 
	 for every $n\in \mathbb{Z}$ and $ m, m'\in \mathbb{N}$ with $m\leq m'$, the inclusion
	$
	\mathcal{O}^{m'}_{n,\omega,p}(\mathbb{R}^N)\hookrightarrow\mathcal{O}^m_{n,\omega,p}(\mathbb{R}^N)$
	is also continuous. So, we can set

	\begin{defn}\label{D.SapziOOP} Let $\omega$ be a non-quasianalytic weight function and $1\leq p<\infty$. 
		
	(a) We define the space $\cO_{M,\omega,p}(\R^N)$ by
	\begin{equation}\label{eq.Omp}
	\cO_{M,\omega,p}(\R^N):=\bigcap_{m=1}^{\infty}\bigcup_{n=1}^\infty \mathcal{O}^m_{n,\omega,p}(\mathbb{R}^N).
	\end{equation}
	The space $\cO_{M,\omega,p}(\R^N)$ is endowed with its natural lc-topology, i.e., $\cO_{M,\omega,p}(\R^N) =\proj_{\stackrel{\leftarrow}{m}}\,\ind_{\stackrel{\rightarrow}{n}}\,\mathcal{O}^m_{n,\omega,p}(\mathbb{R}^N)$ is a projective limit of (LB)-spaces.

	(b) We define the space  $\cO_{C,\omega,p}(\R^N)$  by
	\begin{equation}\label{eq.Ocp}
	\cO_{C,\omega,p}(\R^N):=\bigcup_{n=1}^{\infty}\bigcap_{m=1}^\infty \mathcal{O}^m_{n,\omega,p}(\mathbb{R}^N).
	\end{equation}
	The space  $\cO_{C,\omega,p}(\R^N)$ is endowed with its natural lc-topology, i.e.,  $\cO_{C,\omega,p}(\R^N)=\ind_{\stackrel{\rightarrow}{n}}\proj_{\stackrel{\leftarrow}{m}}\cO_{n,\omega,p}^m(\R^N)$ is an (LF)-space.
\end{defn}

For every $p\in [1,\infty)$,  $\cO_{M,\omega}(\R^N)=\cO_{M,\omega,p}(\R^N)$ and $\cO_{C,\omega}(\R^N)=\cO_{C,\omega,p}(\R^N)$ algebraically and topologically.  In order to show this, we first establish  the following result.

\begin{prop}\label{P.InclLP} Let $\omega$ be a non-quasianalytic weight function and $1\leq p<\infty$. Then the following properties are satisfied. 
	\begin{enumerate}
\item $\mathcal{O}_{M,\omega}(\mathbb{R}^N)\hookrightarrow\mathcal{O}_{M,\omega,p}(\mathbb{R}^N)$ {continuously}.
\item  $\mathcal{O}_{C,\omega}(\mathbb{R}^N)\hookrightarrow \cO_{C,\omega,p}(\R^N)$  {continuously}.
\end{enumerate}
\end{prop}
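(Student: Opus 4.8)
The plan is to prove both statements by reducing them to a single quantitative inclusion between the Banach building blocks and then invoking the universal properties of the projective and inductive limits that define the four spaces. The crucial estimate is the following: for every $m\in\N$, every integer $m'\geq Lm$ (with $L\geq 1$ the constant from \eqref{l}) and every $n\in\N$, the inclusion
\[
\cO^{m'}_{n,\omega}(\R^N)\hookrightarrow \cO^m_{n+n_0,\omega,p}(\R^N),\qquad n_0:=\left[\tfrac{N+1}{bp}\right]+1,
\]
is a well-defined continuous linear map whose operator norm is bounded by $\exp(Lm)\,\|\exp(-n_0\omega)\|_p\,(\sum_{\alpha}\exp(-p|\alpha|))^{1/p}$, a constant independent of $n$. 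Note that $n_0$ depends only on $N$, $b$ and $p$, not on $m$ or $n$; this uniformity is exactly what will make the inductive-limit case (2) go through.

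To obtain this estimate I would start from $f\in\cO^{m'}_{n,\omega}(\R^N)$, so that $|\partial^\alpha f(x)|\leq r_{m',n}(f)\exp(n\omega(x)+m'\varphi^*_\omega(|\alpha|/m'))$ for all $\alpha,x$. Since $\lambda\mapsto \lambda\varphi^*_\omega(|\alpha|/\lambda)$ is decreasing (a consequence of Lemma \ref{L.Pfistar}(1)) and $m'\geq Lm$, one has $m'\varphi^*_\omega(|\alpha|/m')\leq Lm\,\varphi^*_\omega(|\alpha|/(Lm))$; feeding this into \eqref{eq.bb} produces the bound
\[
|\partial^\alpha f(x)|\leq r_{m',n}(f)\,e^{Lm}\exp\!\left(n\omega(x)+m\varphi^*_\omega\!\left(\tfrac{|\alpha|}{m}\right)-|\alpha|\right),
\]
which is precisely the hypothesis of Remark \ref{normap} with constant $C=r_{m',n}(f)e^{Lm}$. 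The computation recorded there then yields $r_{m,n+n_0,p}(f)\leq C\,\|\exp(-n_0\omega)\|_p(\sum_\alpha e^{-p|\alpha|})^{1/p}$, i.e.\ the displayed operator-norm bound. This is the same manipulation already carried out in the proof of Proposition \ref{newdefomoc} followed by Remark \ref{normap}, so no new analytic input is needed; in particular the algebraic inclusions $\cO_{M,\omega}(\R^N)\subseteq\cO_{M,\omega,p}(\R^N)$ and $\cO_{C,\omega}(\R^N)\subseteq\cO_{C,\omega,p}(\R^N)$ drop out at once from this Banach-level step.

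With the Banach-level inclusion in hand, statement (1) follows from the fact that a linear map into $\cO_{M,\omega,p}(\R^N)=\proj_m\ind_n\cO^m_{n,\omega,p}(\R^N)$ is continuous as soon as its composition with each canonical projection is continuous. Fixing $m$ and choosing an integer $m'\geq Lm$, the projection of $\cO_{M,\omega}(\R^N)$ onto its $m'$-th step $\ind_n\cO^{m'}_{n,\omega}(\R^N)$ is continuous, and the family of Banach inclusions $\cO^{m'}_{n,\omega}(\R^N)\hookrightarrow\cO^m_{n+n_0,\omega,p}(\R^N)\hookrightarrow\ind_{n'}\cO^m_{n',\omega,p}(\R^N)$ assembles, by the universal property of the inductive limit, into a continuous map $\ind_n\cO^{m'}_{n,\omega}(\R^N)\to\ind_{n'}\cO^m_{n',\omega,p}(\R^N)$. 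Composing the two gives continuity of $\cO_{M,\omega}(\R^N)\to\ind_n\cO^m_{n,\omega,p}(\R^N)$ for every $m$, hence of the inclusion. For statement (2) the roles of the two limits are interchanged: a linear map out of $\cO_{C,\omega}(\R^N)=\ind_n\proj_m\cO^m_{n,\omega}(\R^N)$ is continuous as soon as its restriction to each step $\proj_m\cO^m_{n,\omega}(\R^N)$ is continuous. For fixed $n$ I would map this step into the single inductive step $\proj_m\cO^m_{n+n_0,\omega,p}(\R^N)$ of the target; continuity of the latter map is checked projection-by-projection exactly as above, using the Banach inclusion composed with the projection of $\proj_m\cO^m_{n,\omega}(\R^N)$ onto $\cO^{m'}_{n,\omega}(\R^N)$, and the result is then followed by the canonical embedding of the $(n+n_0)$-th step into $\cO_{C,\omega,p}(\R^N)$. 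Here the fact that $n_0$ does not depend on $m$ is essential, since it lets me target one fixed index $n+n_0$ uniformly. The computations are routine; the only real care is the bookkeeping of the quantifiers and the order of the two limits, which is why (1) and (2) require slightly different assembly, and I expect this bookkeeping—rather than any analytic difficulty—to be the main obstacle.
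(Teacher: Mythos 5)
Your proposal is correct and takes essentially the same route as the paper's proof: the Banach-level estimate obtained from Proposition \ref{newdefomoc} together with Remark \ref{normap}, then assembled through the projective and inductive limit topologies exactly as you describe. One remark: your key inclusion $\cO^{m'}_{n,\omega}(\R^N)\hookrightarrow \cO^{m}_{n+n_0,\omega,p}(\R^N)$ for $m'\geq Lm$ places the indices correctly (the larger index $m'$ on the sup-norm source, the smaller index $m$ on the $L^p$-target), whereas the paper's displays \eqref{eq.CSupLp} and \eqref{eq.CSupLp-2} state the estimate with $m$ and $m'$ transposed---read literally they would fail, since then $m\varphi^*_\omega(|\alpha|/m)-m'\varphi^*_\omega(|\alpha|/m')\geq 0$ and the sum over $\alpha$ diverges---so your formulation is in fact the corrected form of the estimate the paper intends to use.
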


\begin{proof} (1) Fix $m,  n\in\N$ and set $n_0:=\left[\frac{N+1}{pb}\right]+1$. Then Proposition \ref{newdefomoc}(1) and Remark \ref{normap} imply  for every $f\in \cO^m_{n,\omega}(\R^N)$ and $m'\geq Lm$ that
	\begin{equation}\label{eq.CSupLp}
	r_{m', n+n_0,p}(f)\leq \exp(Lm)Cr_{m,n}(f),
	\end{equation}
where $C:=||\exp(-n_0\omega)||_p\left(\sum_{\alpha\in\N_0^N}\exp(-p|\alpha|)\right)^{\frac{1}{p}}<\infty$. Accordingly, the inclusions 
\[
\cO^m_{n,\omega}(\R^N)\hookrightarrow \cO^{m'}_{n+n_0,\omega,p}(\R^N)\hookrightarrow \bigcup_{n'=1}^\infty \cO^{m'}_{n',\omega,p}(\R^N)
\]
 are continuous for every $m'\geq Lm$. The arbitrarity of $n\in\N$ yields that also the inclusion
\[
\bigcup_{n=1}^\infty\cO^m_{n,\omega}(\R^N)\hookrightarrow  \bigcup_{n'=1}^\infty \cO^{m'}_{n',\omega,p}(\R^N)
\] 
is continuous for every  $m'\geq Lm$. Finally, since $m\in\N$ is  arbitrary and the spaces $\cO_{M,\omega}(\R^N)$ and $\mathcal{O}_{M,\omega,p}(\mathbb{R}^N)$ are endowed with the projective lc-topology defined by the spectrum $\{\bigcup_{n=1}^\infty\cO^m_{n,\omega}(\R^N)\}_{m\in\N}$ and $\{ \bigcup_{n'=1}^\infty \cO^{m'}_{n',\omega,p}(\R^N)\}_{m'\in\N}$ respectively,  the thesis  follows. 
 
 (2) Fix $m,  n\in\N$ and set $n_0:=\left[\frac{N+1}{pb}\right]+1$. Then Proposition \ref{newdefomoc}(2) and Remark \ref{normap} clearly imply  for every $f\in \cO^m_{n,\omega}(\R^N)$, $m'\geq Lm$ and $n'\geq n+n_0$ that
 \begin{equation}\label{eq.CSupLp-2}
 r_{m', n',p}(f)\leq \exp(Lm)Cr_{m,n}(f).
 \end{equation}
This means that the inclusion
 \[
 \cO^m_{n,\omega}(\R^N)\hookrightarrow \cO^{m'}_{n',\omega,p}(\R^N)
 \]
 is continuous for every $m'\geq Lm$ and $n'\geq n+n_0$.
Since $m\in \N$ is arbitrary and  the spaces $\bigcap_{m=1}^\infty \cO^m_{n,\omega}(\R^N)$ and $\bigcap_{m'=1}^\infty \cO^{m'}_{n',\omega}(\R^N)$ are endowed with 
 the projective lc-topology defined by the spectrum $\{\cO^m_{n,\omega}(\R^N)\}_{m\in\N}$ and $\{\cO^{m'}_{n',\omega,p}(\R^N)\}_{m\in\N}$ respectively, it follows that also the inclusion
 \[
 \bigcap_{m=1}^\infty \cO^m_{n,\omega}(\R^N)\hookrightarrow \bigcap_{m'=1}^\infty \cO^{m'}_{n',\omega,p}(\R^N)
 \]
is continuous for every $n'\geq n+n_0$. Finally, taking into account that $\bigcap_{m'=1}^\infty \cO^{m'}_{n',\omega,p}(\R^N)\hookrightarrow \cO_{C,\omega,p}(\R^N)$  and that $n\in\N$ is arbitrary, the thesis follows.
\end{proof}

\begin{rem}\label{Omnp.rappS} 
	We observe that if $f\in \cS_\omega(\R^N)$, then $\sigma_{m,n,p}(f)=r_{m,-n,p}(f)$ for every $m\in \mathbb{N}$,  $n\in\mathbb{N}$ and $p\in [1,\infty)$. Therefore, by Proposition \ref{P.norme} it follows that
$\mathcal{S}_{\omega}(\mathbb{R}^N)=\bigcap_{n=1}^{\infty}\bigcap_{m=1}^\infty \mathcal{O}^m_{-n,\omega,p}(\mathbb{R}^N)$ for every $p\in [1,\infty)$.	
\end{rem}

We now show the reverse inclusions, i.e., that 
$\cO_{M,\omega,p}(\R^N)\hookrightarrow  \cO_{M,\omega}(\R^N)$ and $\cO_{C,\omega,p}(\R^N)\hookrightarrow \cO_{C,\omega}(\R^N)$ continuously for every $p\in [1,\infty)$.

In order to prove such topological inclusions,   we introduce the weighted space  $W^{k,p}(\mathbb{R}^N, \exp(-n\omega(x))\,dx)$, with $1\leq p\leq \infty$, $n\in\N_0$ and $k\in\N_0 \cup \{\infty\}$, defined as the set of all functions $f\in W^{k,p}_{loc}(\mathbb{R}^N)$ such that
\begin{equation*}
\|f\|_{k,p,\exp(-n\omega)}:=\sum_{|\alpha|\leq k} \|\exp(-n\omega)\partial^\alpha f\|_p<\infty.
\end{equation*}
Since the weight $\exp(-n\omega(x))\in L^\infty(\R^N)$ is a positive function on $\R^N$, it is straithforward to verify that $( W^{k,p}(\mathbb{R}^N, \exp(-n\omega(x))\,dx), \|\cdot\|_{k,p,\exp(-n\omega)})$ is a Banach space
and that  $C^k_0(\R^N)$ is a dense subspace of $( W^{k,p}(\mathbb{R}^N, \exp(-n\omega(x))\,dx), \|\cdot\|_{k,p,\exp(-n\omega)})$.
% (and, hence also $C^k_0(\R^N):=\{f\in C^k(\R^N)\colon {\rm supp}f \ {\rm \ is\ compact}\}$ is dense in such a space).

%As for the classical Sobolev spaces, it is  to see that $f\in W^{k,p}(\mathbb{R}^N, \exp(-n\omega(x))\,dx)$, if and only if $\partial^\alpha f\in L^p(\mathbb{R}^N, \exp(-n\omega(x))\,dx)$ for every $\alpha \in \N^N$ with $|\alpha|\leq k$.

We now show that  an appropriate embedding theorem is also valid in the setting of the spaces introduced above.

\begin{prop}\label{PropMorrey}
	Let $\omega$ be a non-quasianalytic weight function and let $k\in\N$ and $1\leq p <\infty$. If $kp>N$, then  for each  $n\in\N_0$ there exist  $n'\geq n$ with $n'=n'(n,\omega)\in\N_0$  and   $C>0$ with $C=C(n,N,k,p)$ such that  for every $f\in W^{k,p}(\mathbb{R}^N, \exp(-n\omega(x))\,dx) $ the following inequality is valid:
	\begin{equation}\label{morrey}
	\|f\exp(-n'\omega)\|_\infty\leq C\|f\|_{k,p,\exp(-n\omega)}.
	\end{equation} 
\end{prop}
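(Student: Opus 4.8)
The plan is to localize the classical Sobolev embedding to unit balls and then to transfer the resulting local estimates to the weighted global norm by exploiting the quasi-subadditivity \eqref{sub} of $\omega$; here and throughout I read $\omega(x)$ as $\omega(|x|)$ for $x\in\R^N$.

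First I would recall the classical Morrey--Sobolev embedding on the open unit ball $B:=B(0,1)$: since $kp>N$, there is a constant $C_0=C_0(N,k,p)>0$ such that every $g\in W^{k,p}(B)$ has a continuous representative with
\[
\|g\|_{L^\infty(B)}\leq C_0\,\|g\|_{W^{k,p}(B)}.
\]
By translation invariance the same inequality, with the same constant $C_0$, holds on each translated ball $B(x,1)$, $x\in\R^N$. I would also observe that the finiteness of $\|f\|_{k,p,\exp(-n\omega)}$ forces $f\in W^{k,p}(B(x,1))$ for every $x$, because the positive continuous weight $\exp(-n\omega)$ is bounded away from $0$ on each relatively compact ball; hence $f$ admits a continuous representative and the pointwise value $f(x)$ is meaningful.

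Next, for fixed $x\in\R^N$ I would apply the localized embedding to obtain
\[
|f(x)|\leq \|f\|_{L^\infty(B(x,1))}\leq C_0\Big(\sum_{|\alpha|\leq k}\int_{B(x,1)}|\partial^\alpha f(y)|^p\,dy\Big)^{1/p}.
\]
To bring in the weight I would insert the factor $\exp(-pn\omega(y))\exp(pn\omega(y))$ under each integral and estimate $\exp(pn\omega(y))$ from above for $y\in B(x,1)$. Since $|y|\leq |x|+1$ and $\omega$ is increasing, monotonicity together with \eqref{sub} gives $\omega(y)\leq \omega(|x|+1)\leq K(1+\omega(x)+\omega(1))$, so that $\exp(pn\omega(y))\leq \exp\!\big(pnK(1+\omega(1))\big)\exp\!\big(pnK\omega(x)\big)$ uniformly in $y\in B(x,1)$. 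Pulling this bound out of the integrals and enlarging the domain of integration from $B(x,1)$ to $\R^N$ yields
\[
|f(x)|\leq C_0\,e^{nK(1+\omega(1))}\,e^{nK\omega(x)}\Big(\sum_{|\alpha|\leq k}\|\exp(-n\omega)\partial^\alpha f\|_p^p\Big)^{1/p}\leq C_0\,e^{nK(1+\omega(1))}\,e^{nK\omega(x)}\,\|f\|_{k,p,\exp(-n\omega)},
\]
where the last step uses $\big(\sum a_\alpha^p\big)^{1/p}\leq \sum a_\alpha$.

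Finally I would choose $n'\in\N_0$ to be the least integer with $n'\geq nK$; since $K\geq1$ this also satisfies $n'\geq n$, and $n'$ depends only on $n$ and $\omega$ (through $K$). Multiplying the previous display by $\exp(-n'\omega(x))$ and using $\exp\!\big((nK-n')\omega(x)\big)\leq1$ (as $nK-n'\leq0$ and $\omega\geq0$) gives $|f(x)|\exp(-n'\omega(x))\leq C\|f\|_{k,p,\exp(-n\omega)}$ with $C:=C_0\,e^{nK(1+\omega(1))}=C(n,N,k,p)$, uniformly in $x$; taking the supremum over $x\in\R^N$ yields \eqref{morrey}. The only point requiring care is the uniformity of the embedding constant across all unit balls, which is guaranteed by translation invariance; once that is in hand, the weight comparison is an immediate consequence of \eqref{sub} and the rest is bookkeeping.
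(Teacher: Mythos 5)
Your proof is correct, and it shares with the paper's proof the decisive weight-transfer step: both arguments localize to the unit balls $B(x,1)$ and use condition $(\alpha)$ (via \eqref{sub}) to get $\omega(y)\leq K(1+\omega(1)+\omega(x))$ for all $y\in B(x,1)$, which is exactly what produces the loss from $n$ to $n'\geq Kn$. Where you genuinely diverge is in how the local sup-estimate is obtained. The paper rebuilds it by hand: it starts from Morrey's averaged inequality (cited from Evans), proves the case $k=1$, $p>N$ for $C^1_0$ functions and extends by density, and then needs two further cases ($k>1$ with $p>N$, and $p\leq N<kp$ via an intermediate exponent $r=Np/(N-jp)$), the last of which rests on a somewhat delicate comparison of $L^r$ and $L^p$ norms on balls. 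You instead quote the classical embedding $W^{k,p}(B)\hookrightarrow L^\infty(B)$ for $kp>N$ as a black box and observe that its constant is uniform over all balls $B(x,1)$ by translation invariance; this collapses the paper's three-case analysis into a single stroke and removes the need for the density argument, at the cost of invoking a stronger off-the-shelf theorem rather than only Morrey's basic inequality. Both routes yield the same quantitative conclusion: $n'$ can be taken as the least integer $\geq Kn$, and the constant $C$ in fact depends on $K$ and $\omega(1)$ (hence on $\omega$) in your proof and in the paper's alike, so your bookkeeping is consistent with theirs.
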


\begin{proof} We first consider the case $k=1$ and so $p>N$. 
	
By  Morrey's inequality (\cite[Theorem 4, p.266]{E}), there exists a constant $C_N>0$ such that for every $f\in C^1(\R^N)$, $x\in \R^N$ and $r>0$ we have
	\begin{equation}\label{eq.Morrey-1}
	\frac{1}{|B_r(x)|}\int_{B_r(x)} |f(y)-f(x)|\,dy\leq C_N \int_{B_r(x)} \frac{|Df(y)|}{|y-x|^{N-1}}\, dy.
	\end{equation}
Now, let $n'\geq Kn$ with $n'\in\N_0$, where $K\geq 1$ is the constant appearing in Definition \ref{D.weight}($\alpha$). Since  $\omega(y)=\omega(|(y-x)+x)|)\leq \omega(|x-y|+|x|))\leq K(\omega(x-y)+\omega(x)+1)\leq K(\omega(1)+\omega(x)+1)$ for $x,y\in\R^N$ with $|x-y|\leq 1$, it follows by \eqref{eq.Morrey-1} for every   $f\in C^1(\R^N)$ and  $x\in\R^N$, that 
	\begin{align*}
&	|f(x)|\exp(-n'\omega(x))\leq |f(x)|\exp(-Kn\omega(x))= \frac{1}{|B_1(x)|}\int_{B_1(x)} |f(x)|\exp(-Kn\omega(x))\,dy\\
&\quad \leq  \frac{1}{|B_1(x)|}\int_{B_1(x)} |f(x)-f(y)+f(y)|\exp(-Kn\omega(x)\, dy\\
&\quad  \leq  C_N\int_{B_1(x)} \frac{|Df(y)|\exp(-Kn\omega(x))}{|y-x|^{N-1}}\,dy
+\frac{1}{|B_1(x)|}\int_{B_1(x)}|f(y)|\exp(-Kn\omega(x))dy\\
&\quad \leq  C_N\exp(Kn(1+\omega(1)))\int_{B_1(x)} \frac{|Df(y)|\exp(-n\omega(y))}{|y-x|^{N-1}}\,dy\\
&\quad +\frac{\exp(Kn(1+\omega(1)))}{|B_1(x)|}\int_{B_1(x)}|f(y)|\exp(-n\omega(y))dy.
	\end{align*}
So, setting $C:=\max\left\{C_N\exp(Kn(1+\omega(1))), \frac{\exp(Kn(1+\omega(1)))}{|B_1(x)|}\right\}$ and  applying H\"older's inequality, we get for every  $f\in C^1_0(\R^N)$ and $x\in\R^N$ that 
	\begin{align*}
&	|f(x)|\exp(-n'\omega(x))\leq \\
&\quad\leq C\left(\int_{B_1(x)} |Df(y)|^p\exp(-pn\omega(y))\,dy\right)^\frac{1}{p}\left(\int_{B_1(x)} \frac{1}{|y-x|^{(N-1)p'}}\,dy\right)^\frac{1}{p'}\\
&\quad+C|B_1(x)|^{1/p'}\left(\int_{B_1(x)}|f(y)|^p\exp(-pn\omega(y))dy\right)^{\frac{1}{p}}\leq C' \|f\|_{1,p,\exp(-n\omega)},
	\end{align*}
	after having observed  that $\left(\int_{B_1(x)} \frac{1}{|y-x|^{(N-1)p'}}\,dy\right)^\frac{1}{p'}<\infty$ as $p>N$. Therefore, we have  for every $f\in C^1_0(\R^N)$ that 
	 \[
	\|f\exp(-n'\omega)\|_\infty\leq C' \|f\|_{1,p,\exp(-n\omega)}.
	\]	
Thus, \eqref{morrey} is proved for $k=1$ as $C_0^1(\R^N)$ is a dense subspace of $W^{1,p}(\R^N,\exp(-n\omega(x))dx)$. To conclude the proof in the case $k>1$, we  proceed as follows.

If $k>1$ but $p>N$,   we have by the result proved above that for every $f\in W^{k,p}(\R^N,\exp(-n\omega(x))dx)$ and $x\in\R^N$  
\begin{align*}
&	|f(x)|\exp(-n'\omega(x))\leq \\
&\quad\leq C'\left[\left(\int_{B_1(x)}|f(y)|^p\exp(-pn\omega(y))dy\right)^{\frac{1}{p}}+\left(\int_{B_1(x)} |Df(y)|^p\exp(-pn\omega(y))\,dy\right)^\frac{1}{p}\right]\\
&\quad\leq C''\sum_{|\alpha|\leq k}\left(\int_{B_1(x)}|\partial^\alpha f(y)|^p\exp(-pn\omega(y))dy\right)^{\frac{1}{p}}\leq C'' ||f||_{k,p,\exp(-n\omega)},
\end{align*}
with $C''=C''(C',N)>0$. Accordingly, we obtain for every $f\in W^{k,p}(\R^N,\exp(-n\omega(x))dx)$ that 
\[
||f\exp(-n'\omega)||_\infty\leq C''||f||_{k,p,\exp(-n\omega)}.
\]
If $p\leq N<kp$, then there exists $j\in\N$ with $1\leq j\leq k-1$ such that $jp\leq N<(j+1)p$. If $jp<N$, we set $r:=\frac{Np}{N-jp}$. If $jp=N$, we choose $r>\max\{N,p\}$. In both cases, $r>N$ and $r\geq p$. So, by the result proved above we have for every $f\in W^{k,p}(\R^N,\exp(-n\omega(x))dx)$ and $x\in\R^N$ that 
\begin{align*}
&	|f(x)|\exp(-n'\omega(x))\leq \\
&\quad\leq C'_1\left[\left(\int_{B_1(x)}|f(y)|^r\exp(-rn\omega(y))dy\right)^\frac{1}{r}+\left(\int_{B_1(x)} |Df(y)|^p\exp(-rn\omega(y))\,dy\right)^\frac{1}{r}\right]\\
&\quad \leq C_1''\sum_{|\alpha|\leq k}\left(\int_{B_1(x)}|\partial^\alpha f(y)|^r\exp(-rn\omega(y))dy\right)^\frac{1}{r}\\
&\quad \leq C_1''c\sum_{|\alpha|\leq k}\left(\int_{B_1(x)}|\partial^\alpha f(y)|^p\exp(-pn\omega(y))dy\right)^\frac{1}{p} \leq C''_1c ||f||_{k,p,\exp(-n\omega)},
\end{align*}
after having observed that the map  $L^r(B_1(x))\hookrightarrow L^p(B_1(x))$ is continuous as $r\geq p$ with norm $c$ depending only on $r,p$ and on the volume $|B_1(x)|=|B_1(0)|$. So, also in this case we have for every $f\in W^{k,p}(\R^N,\exp(-n\omega(x))dx)$ that 
\[
||f\exp(-n'\omega)||_\infty\leq C''_1c||f||_{k,p,\exp(-n\omega)},
\]
with $C''_1c>0$ depending on $n, N, k,p$. So, the proof is complete.
\end{proof}

\begin{rem}
	If the weight function $\omega$ is sub-additive, i.e., $\omega(s+t)\leq \omega(s)+\omega(t)$ for $s,t\geq 0$, from the proof above it follows that $||f\exp(-n\omega)||_\infty\leq C||f||_{k,p,\exp(-n\omega)}$ whenever $n\in\N_0$ and $f\in W^{k,p}(\mathbb{R}^N, \exp(-n\omega(x))\,dx)$ with $kp>N$.
\end{rem}

Proposition \ref{PropMorrey} is the main tool towards the following result.

\begin{prop}\label{PropInclp}
	Let $\omega$ be a non-quasianalytic weight function and let $ 1\leq p<\infty$. Then for every  $n\in\mathbb{N}$ there exists $n'\geq n$ such that for every $m\in\N$ the inclusion	
	\begin{equation}\label{eq.inlcP}
	\mathcal{O}^{2m}_{n,\omega,p}(\mathbb{R}^N)\hookrightarrow\mathcal{O}^m_{n',\omega}(\mathbb{R}^N)
	\end{equation}
	is well-defined and continuous.
\end{prop}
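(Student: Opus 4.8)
The plan is to derive the $L^\infty$-type bound defining $\cO^m_{n',\omega}(\R^N)$ from the $L^p$-data encoded in $r_{2m,n,p}$ by applying the weighted Morrey embedding of Proposition \ref{PropMorrey} to each derivative $\partial^\beta f$ separately. First I would fix $k:=[N/p]+1$, so that $kp>N$, and feed the given $n\in\N$ into Proposition \ref{PropMorrey}. This produces an integer $n'\geq n$, depending only on $n,\omega,N,p$ and crucially \emph{not} on $m$, together with a constant $C>0$ for which $\|g\exp(-n'\omega)\|_\infty\leq C\|g\|_{k,p,\exp(-n\omega)}$ holds for every $g\in W^{k,p}(\R^N,\exp(-n\omega(x))\,dx)$. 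The uniformity of $n'$ in $m$ is automatic, since the embedding only ever uses the fixed number $k$ of derivatives.

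Next, for $f\in\cO^{2m}_{n,\omega,p}(\R^N)$ and an arbitrary $\beta\in\N_0^N$, I would apply the embedding to $g=\partial^\beta f$ --- which lies in the weighted Sobolev space because each $\|\exp(-n\omega)\partial^\alpha f\|_p$ is finite --- to obtain
\[
\|\partial^\beta f\exp(-n'\omega)\|_\infty\leq C\sum_{|\gamma|\leq k}\|\exp(-n\omega)\partial^{\beta+\gamma}f\|_p.
\]
Because every term in the series defining $r^p_{2m,n,p}(f)$ is nonnegative, each factor $\|\exp(-n\omega)\partial^{\beta+\gamma}f\|_p$ is at most $r_{2m,n,p}(f)\exp\!\left(2m\varphi^*_\omega(|\beta+\gamma|/(2m))\right)$, so the whole right-hand side is controlled by $r_{2m,n,p}(f)$ times a sum of such exponentials.

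The decisive step, and the one I expect to be the main obstacle, is trading the parameter $2m$ for $m$. Using that $\varphi^*_\omega$ is increasing together with $|\beta+\gamma|\leq|\beta|+k$, and then Lemma \ref{L.Pfistar}(2) with $\lambda=m$, $s=|\beta|$, $t=k$, I would estimate
\[
2m\varphi^*_\omega\!\left(\frac{|\beta+\gamma|}{2m}\right)\leq 2m\varphi^*_\omega\!\left(\frac{|\beta|+k}{2m}\right)\leq m\varphi^*_\omega\!\left(\frac{|\beta|}{m}\right)+m\varphi^*_\omega\!\left(\frac{k}{m}\right);
\]
this doubling is precisely why the hypothesis must be stated with $2m$. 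Then, since $\varphi^*_\omega(t)/t$ is increasing by Lemma \ref{L.Pfistar}(1) and $k/m\leq k$, the term satisfies $m\varphi^*_\omega(k/m)\leq\varphi^*_\omega(k)$, a bound independent of both $m$ and $\beta$.

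Finally, collecting the constants and summing over the finitely many $\gamma$ with $|\gamma|\leq k$, then taking the supremum over $\beta\in\N_0^N$ and $x\in\R^N$, I would conclude
\[
r_{m,n'}(f)\leq C' r_{2m,n,p}(f)
\]
for a constant $C'=C'(n,N,k,p)$, which simultaneously shows $f\in\cO^m_{n',\omega}(\R^N)$ and that the inclusion \eqref{eq.inlcP} is well-defined and continuous. The genuine difficulty lies only in the $2m\to m$ passage; the independence of $n'$ from $m$ and the finiteness of the $\gamma$-sum are structural features already guaranteed by Proposition \ref{PropMorrey} and the fixed embedding order $k$.
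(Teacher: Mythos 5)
Your proof is correct and follows essentially the same route as the paper's: apply the weighted Sobolev embedding of Proposition \ref{PropMorrey} to each derivative $\partial^\beta f$ (with a fixed order $k$ satisfying $kp>N$, which is why $n'$ is independent of $m$), and then trade $2m$ for $m$ via the convexity inequality \eqref{secondprop} together with the monotonicity of $\varphi^*_\omega(t)/t$. The only immaterial difference is that you bound each factor $\|\exp(-n\omega)\partial^{\beta+\gamma}f\|_p$ term-by-term by $r_{2m,n,p}(f)\exp\left(2m\varphi^*_\omega\left(\frac{|\beta+\gamma|}{2m}\right)\right)$ and then sum the finitely many exponentials, whereas the paper estimates the sum over the multi-indices of length $<k$ by H\"older's inequality; both arguments yield the required estimate $r_{m,n'}(f)\leq C'\, r_{2m,n,p}(f)$.
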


\begin{proof} Fix $n\in\N$ and choose $k\in\N$ satisfying   $kp>N$. Then by Proposition \ref{PropMorrey} there exist $n'=n'(n,\omega)\geq n$ with $n\in\N$ and $C=C(n,N,k,p)>0$ such that inequality \eqref{morrey} is satisfied. So, for a fixed $m\in\N$, we have for every $f\in \mathcal{O}^{2m}_{n,\omega,p}(\mathbb{R}^N)\subset C^\infty(\R^N)$ and  $\alpha\in\mathbb{N}^N$ that 
	\begin{equation*}
	\|\partial^\alpha f \exp(-n'\omega)\|_\infty\leq C\|\partial^\alpha f\|_{k,p,\exp(-n\omega)}.
	\end{equation*}
Thus, applying the fact that  $\varphi^*_\omega(t)/t$ is increasing function in $(0,\infty)$ and inequality \eqref{secondprop}, it follows for every $f\in \mathcal{O}^{2m}_{n,\omega,p}(\mathbb{R}^N)$ and  $\alpha\in\mathbb{N}^N$ that 
	\begin{align*}
	&\|\partial^\alpha f \exp(-n'\omega)\|_\infty\exp\left(-m\varphi^*_\omega\left(\frac{|\alpha|}{m}\right)\right)\leq C\|\partial^\alpha f\|_{k,p\exp(-n\omega)}\exp\left(-m\varphi^*_\omega\left(\frac{|\alpha|}{m}\right)\right)\\
	&\quad =C\|\partial^\alpha f\exp(-n\omega)\|_p\exp\left(-m\varphi^*_\omega\left(\frac{|\alpha|}{m}\right)\right)\\
	&\quad +C\sum_{0<|\beta|<k} \|\partial^{\alpha+\beta}f\,\exp(-n\omega)\|_p\exp\left(-m\varphi^*_\omega\left(\frac{|\alpha|}{m}\right)\right)\\
	&\quad\leq C\|\partial^\alpha f\exp(-n\omega)\|_p\exp\left(-2m\varphi^*_\omega\left(\frac{|\alpha|}{2m}\right)\right)\\
	&\quad +C\sum_{0<|\beta|<k} \|\partial^{\alpha+\beta}f\,\exp(-n\omega)\|_p\exp\left(-2m\varphi^*_\omega\left(\frac{|\alpha+\beta|}{2m}\right)+m\varphi^*_\omega\left(\frac{\beta}{m}\right)\right)\\
	&\quad \leq C'\left(\sum_{\gamma\in\mathbb{N}^N}\|\partial^\gamma f\exp(-n\omega)\|_p^p\exp\left(-2pm\varphi^*_\omega\left(\frac{|\gamma|}{2m}\right)\right)\right)^{\frac{1}{p}}=C'r_{2m,n,p}(f),	
	\end{align*}
	where $C':=C\left(\sum_{0<|\beta|<k}\exp\left(p'm\varphi^*_\omega\left(\frac{|\beta|}{m}\right)\right)+1\right)^{\frac{1}{p'}}$ whenever $1<p<\infty$, with $\frac{1}{p}+\frac{1}{p'}=1$, and $C':=C\sup_{0<|\beta|<k}\exp\left(p'm\varphi^*_\omega\left(\frac{|\beta|}{m}\right)\right)+1$ whenever $p=1$. Accordingly, for every $f\in \cO_{n,\omega,p}^{2m}(\R^N)$ the following inequality holds
	\begin{equation*}
	r_{m,n'}(f)=\underset{\alpha\in\mathbb{N}^N_0}{\sup}\,\|\partial^\alpha f \exp(-n\omega)\|_\infty\exp\left(-m\varphi^*_\omega\left(\frac{|\alpha|}{m}\right)\right)\leq C'r_{2m,n,p}(f).
	\end{equation*}
	This completes the proof.
\end{proof}

Thanks to Proposition  \ref{PropInclp}, we are now able to show the following result.

\begin{prop}\label{incoc}
	Let $\omega$ be a non-quasianalytic weight function and  $ 1\leq p< \infty$. Then the following properties are satisfied.
	\begin{enumerate}
		\item $\cO_{M,\omega}(\R^N)=\cO_{M,\omega,p}(\R^N)$ algebraically and topologically.
		\item $\cO_{C,\omega}(\R^N)=\cO_{C,\omega,p}(\R^N)$ algebraically and topologically.
	\end{enumerate}
\end{prop}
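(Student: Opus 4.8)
The plan is to combine the two propositions just established. Proposition \ref{P.InclLP} already furnishes the continuous inclusions $\cO_{M,\omega}(\R^N)\hookrightarrow\cO_{M,\omega,p}(\R^N)$ and $\cO_{C,\omega}(\R^N)\hookrightarrow\cO_{C,\omega,p}(\R^N)$, so it remains only to establish the reverse continuous inclusions; since every map involved is the identity on the underlying sets of $C^\infty$ functions, the algebraic and topological equalities will then follow simultaneously. The single extra ingredient is Proposition \ref{PropInclp}, which for each $n\in\N$ produces one $n'\geq n$, \emph{independent of $m$}, with $\cO^{2m}_{n,\omega,p}(\R^N)\hookrightarrow\cO^m_{n',\omega}(\R^N)$ continuously for every $m\in\N$.

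For the algebraic reverse inclusions I would argue directly from the defining unions and intersections. If $f\in\cO_{M,\omega,p}(\R^N)=\bigcap_{m'}\bigcup_n\cO^{m'}_{n,\omega,p}(\R^N)$, then fixing $m\in\N$ and applying this membership with the index $2m$ gives some $n$ with $f\in\cO^{2m}_{n,\omega,p}(\R^N)$; Proposition \ref{PropInclp} then yields $n'\geq n$ with $f\in\cO^m_{n',\omega}(\R^N)$, and since $m$ was arbitrary we obtain $f\in\cO_{M,\omega}(\R^N)$. For the convolutor case, if $f\in\cO_{C,\omega,p}(\R^N)=\bigcup_n\bigcap_m\cO^m_{n,\omega,p}(\R^N)$ there is a \emph{single} $n$ with $f\in\cO^{m'}_{n,\omega,p}(\R^N)$ for all $m'$; the $m$-independence of $n'$ in Proposition \ref{PropInclp} is precisely what lets one fixed $n'$ satisfy $f\in\cO^m_{n',\omega}(\R^N)$ for every $m$, so that $f\in\bigcap_m\cO^m_{n',\omega}(\R^N)\subseteq\cO_{C,\omega}(\R^N)$.

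For the topological inclusion in the multiplier case, writing $\cO_{M,\omega,p}(\R^N)=\proj_m\ind_n\cO^m_{n,\omega,p}(\R^N)$ and $\cO_{M,\omega}(\R^N)=\proj_m\ind_n\cO^m_{n,\omega}(\R^N)$, it suffices by the universal property of the projective limit to check, for each fixed $m$, continuity of the identity into $\ind_n\cO^m_{n,\omega}(\R^N)$. I would factor this through the $2m$-th step of the source: the canonical projection $\cO_{M,\omega,p}(\R^N)\to\ind_n\cO^{2m}_{n,\omega,p}(\R^N)$ is continuous, while by Proposition \ref{PropInclp} each $\cO^{2m}_{n,\omega,p}(\R^N)\hookrightarrow\cO^m_{n',\omega}(\R^N)\hookrightarrow\ind_{n''}\cO^m_{n'',\omega}(\R^N)$ is continuous, so the induced map on the inductive limits is continuous, and composing gives the desired continuity into the $m$-th step.

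In the convolutor case the two limits are interchanged, so I would instead use that a map out of an inductive limit is continuous if and only if its restriction to each step is continuous. Fixing $n$, Proposition \ref{PropInclp} supplies $n'\geq n$ for which the composites $\proj_{m'}\cO^{m'}_{n,\omega,p}(\R^N)\to\cO^{2m}_{n,\omega,p}(\R^N)\to\cO^m_{n',\omega}(\R^N)$ are continuous for every $m$; by the universal property of the projective limit this yields a continuous map $\proj_{m'}\cO^{m'}_{n,\omega,p}(\R^N)\to\proj_m\cO^m_{n',\omega}(\R^N)$, which then composes with the canonical inclusion into $\cO_{C,\omega}(\R^N)=\ind_{n''}\proj_m\cO^m_{n'',\omega}(\R^N)$. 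I expect the main difficulty to be purely organizational: tracking which limit is outer and which is inner, and in particular exploiting the uniformity of $n'$ in $m$ from Proposition \ref{PropInclp}, which is indispensable exactly for the convolutor (inductive--then--projective) case.
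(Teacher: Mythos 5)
Your proposal is correct and follows essentially the same route as the paper: Proposition \ref{P.InclLP} for the forward inclusions, and Proposition \ref{PropInclp} for the reverse ones, organized through the continuous inclusions $\bigcup_{n}\cO^{2m}_{n,\omega,p}(\R^N)\hookrightarrow\bigcup_{n}\cO^{m}_{n,\omega}(\R^N)$ and $\bigcap_{m}\cO^{m}_{n,\omega,p}(\R^N)\hookrightarrow\bigcap_{m}\cO^{m}_{n',\omega}(\R^N)$, which is exactly what the paper records in \eqref{eq.inLpp} and \eqref{eq.inLLp}. Your write-up merely spells out the projective/inductive limit bookkeeping (including the crucial $m$-independence of $n'$) that the paper leaves implicit.
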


\begin{proof}  By Proposition \ref{P.InclLP} it suffices to establish that both inclusions
\[
\cO_{M,\omega,p}(\R^N)\hookrightarrow \cO_{M,\omega}(\R^N)\ \  {\rm and} \ \ \cO_{C,\omega,p}(\R^N)\hookrightarrow \cO_{C,\omega}(\R^N)
\]	
	are well-defined and continuous. But this easily follows by applying  Proposition \ref{PropInclp} and by argumenting in a similar way as in the proof of Proposition \ref{P.InclLP}. Indeed, Proposition \ref{PropInclp} implies that  the inclusions
	\begin{equation}\label{eq.inLpp}
	\bigcup_{n=1}^\infty \cO^{2m}_{n,\omega,p}(\R^N)\hookrightarrow \bigcup_{n=1}^\infty \cO^{m}_{n,\omega}(\R^N), \quad m\in\N,
	\end{equation}
	\begin{equation}\label{eq.inLLp}
		\bigcap_{m=1}^\infty \cO^{m}_{n,\omega,p}(\R^N)\hookrightarrow \bigcap_{m=1}^\infty \cO^{m}_{n',\omega}(\R^N), \quad n'\geq n,
	\end{equation}
	are continuous and hence the result follows.
	 %with $\cO_{M,\omega,p}(\R^N)$ in place of $\cO_{M,\omega}(\R^N)$ and  $\cO_{C,\omega,p}(\R^N)$ in place of $\cO_{C,\omega}(\R^N)$, respectively.
%	
%	
%	Thanks to inclusion (3.11), we only have to show that
%	\begin{equation*}
%	\mathcal{O}_{C,\omega}(\mathbb{R}^N)\supset\bigcup_{n=1}^{\infty}\bigcap_{m=1}^\infty \mathcal{O}^m_{n,\omega,p}(\mathbb{R}^N).
%	\end{equation*}
%	By Lemma \ref{lem}, we have that for each $n\in\mathbb{N}$
%	\begin{equation*}
%	\bigcap_{h=1}^\infty\mathcal{O}^{h}_{n,\omega,p}(\mathbb{R}^N)\hookrightarrow\bigcap_{m=1}^\infty\mathcal{O}^{2m}_{n,\omega,p}(\mathbb{R}^N)\hookrightarrow\bigcap_{m=1}^\infty\mathcal{O}^m_{n,\omega}(\mathbb{R}^N)
%	\end{equation*}
%	continuously. But
%	\begin{equation*}
%	\bigcap_{m=1}^\infty\mathcal{O}^m_{n,\ome%ga}(\mathbb{R}^N)\hookrightarrow\bigcup_{k=1}^\infty\bigcap_{m=1}^\infty\mathcal{O}^m_{k,\omega}(\mathbb{R}^N)
%	\end{equation*}
%	continuously, and so it follows that for each $n\in\mathbb{N}$
%	\begin{equation*}
%	\bigcap_{h=1}^\infty\mathcal{O}^{h}_{n,\omega,p}(\mathbb{R}^N)\hookrightarrow\bigcup_{k=1}^\infty\bigcap_{m=1}^\infty\mathcal{O}^m_{k,\omega}(\mathbb{R}^N)=\mathcal{O}_{C,\omega}(\mathbb{R}^N)
%	\end{equation*}
%	continuously. Hence we get
%	\begin{equation*}
%	\bigcup_{n=1}^\infty\bigcap_{h=1}^\infty\mathcal{O}^{h}_{n,\omega,p}(\mathbb{R}^N)\hookrightarrow\mathcal{O}_{C,\omega}(\mathbb{R}^N)
%	\end{equation*}
%	continuously.	
\end{proof}

 Denoting by $\cO'_{M,\omega,p}(\R^N)$ ($\cO'_{C,\omega,p}(\R^N)$, resp.), for $1\leq p<\infty$, the strong dual of $\cO_{M,\omega,p}(\R^N)$ ($\cO_{C,\omega,p}(\R^N)$, resp.) Propositin \ref{incoc} above implies the following fact.

\begin{cor}\label{C.Duali} Let $\omega$ be a non-quasianalytic weight function and $1\leq p<\infty$. Then the following properties are satisfied.
	\begin{enumerate}
		\item $\cO'_{M,\omega}(\R^N)=\cO'_{M,\omega,p}(\R^N)$ algebraically and topologically.
		\item $\cO'_{C,\omega}(\R^N)=\cO'_{C,\omega,p}(\R^N)$ algebraically and topologically.
	\end{enumerate}
	\end{cor}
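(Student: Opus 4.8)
The plan is to read off the corollary directly from Proposition \ref{incoc}, using nothing more than the general principle that the strong dual of a locally convex space is an invariant of the space itself. The essential observation is that Proposition \ref{incoc} gives the equalities $\cO_{M,\omega}(\R^N)=\cO_{M,\omega,p}(\R^N)$ and $\cO_{C,\omega}(\R^N)=\cO_{C,\omega,p}(\R^N)$ not merely as vector spaces but \emph{as locally convex spaces}: in each pair the two objects share the same underlying vector space and carry the very same lc-topology. Once this is granted, everything about their strong duals must agree, and the argument reduces to recalling why.

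First I would establish the algebraic identity of the duals. The topological dual of a locally convex space, i.e.\ its set of continuous linear functionals, depends only on the topology: a functional is continuous precisely when it is bounded on some neighborhood of the origin, and the neighborhoods are prescribed by the topology alone. Since $\cO_{M,\omega}(\R^N)$ and $\cO_{M,\omega,p}(\R^N)$ carry identical topologies by Proposition \ref{incoc}(1), a linear functional is continuous on one exactly when it is continuous on the other; hence their duals coincide as vector spaces. The same reasoning applied to Proposition \ref{incoc}(2) identifies $\cO'_{C,\omega}(\R^N)$ with $\cO'_{C,\omega,p}(\R^N)$ algebraically.

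Next I would upgrade this to a topological identity. The strong dual topology is generated by the seminorms of uniform convergence on the bounded subsets of the underlying space, and the family of bounded subsets of a locally convex space is itself determined by the topology (a set is bounded iff it is absorbed by every zero-neighborhood). As the topologies of $\cO_{M,\omega}(\R^N)$ and $\cO_{M,\omega,p}(\R^N)$ agree, so do their bounded sets, and therefore the two systems of strong-dual seminorms coincide; this forces the two strong topologies on the common dual to be equal. Running the identical argument for $\cO_{C,\omega}(\R^N)$ and $\cO_{C,\omega,p}(\R^N)$ completes the proof of both (1) and (2). I do not anticipate any genuine obstacle here: the whole content of the corollary is already carried by Proposition \ref{incoc}, and the only thing to verify is the elementary functional-analytic fact that equal topologies yield equal duals and equal bounded sets, hence equal strong duals.
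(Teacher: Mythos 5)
Your proposal is correct and follows exactly the paper's route: the paper states the corollary as an immediate consequence of Proposition \ref{incoc}, which is precisely what you argue. The only difference is that you spell out the elementary fact (equal lc-topologies give equal duals and equal bounded sets, hence equal strong duals) that the paper leaves implicit.
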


%\section{Structure theorems}
Thanks to Proposition \ref{incoc} and Corollary \ref{C.Duali}, it is easy to show some structure theorems for the dual spaces $\cO'_{M,\omega}(\R^N)$  and $\cO'_{C,\omega}(\R^N)$ of  $\cO_{M,\omega}(\R^N)$ and $\cO_{C,\omega}(\R^N)$, respectively. To see this, we first give
a structure theorem for the space $(\mathcal{O}^m_{n,\omega,p}(\mathbb{R}^N), r_{m,n,p})'$. So,
 we  introduce the following weighted spaces.
 
\begin{defn}\label{Lpweighted}
	Let $\omega$ be a non-quasianalytic weight function. 
	
	(a) For  $1\leq p< \infty$ and  for $m\in\N$ and $n\in\Z$ we define  $\left(\oplus L^p(\mathbb{R}^N,\exp(n\omega(x))\, dx)\right)_{\omega,m,p}$  ($\left(\oplus L^p(\mathbb{R}^N,\exp(n\omega(x))\, dx)\right)_{\omega,-m,p}$, resp.) as the set of all sequences $\{f_\alpha\}_{\alpha\in\N_0^N}$ of Lebesgue measurable functions on $\R^N$ satisfying the following condition:
	\begin{align}\label{eq.spazioplus}
&	|\{f_\alpha\}_{\alpha\in\N_0^N}|^p_{m,n\omega,p}:=\sum_{\alpha\in\mathbb{N}^N_0} \|\exp(n\omega)f_\alpha \|_p^p \exp\left(pm\varphi^*_\omega\left(\frac{|\alpha|}{m}\right)\right)<\infty \nonumber \\
&	\left(|\{f_\alpha\}_{\alpha\in\N_0^N}|^p_{-m,n\omega,p}:= \sum_{\alpha\in\mathbb{N}^N_0} \|\exp(n\omega)f_\alpha \|_p^p \exp\left(-pm\varphi^*_\omega\left(\frac{|\alpha|}{m}\right)\right)<\infty, \ {\rm \ resp.}\right).
	\end{align}
	
	(b) For $m\in\N$ and $n\in\Z$ we define the space $\left(\oplus L^\infty(\mathbb{R}^N,\exp(n\omega(x)) dx)\,\right)_{\omega,m,\infty}$ ($\left(\oplus L^\infty(\mathbb{R}^N,\exp(n\omega(x)) dx)\,\right)_{\omega,-m,\infty}$, resp.) as the set of all the sequences $\{f_\alpha\}_{\alpha\in\N_0^N} $ of Lebesgue measurable functions on $\R^N$ satisfying the following condition: 
	\begin{align}\label{eq.spazioO}
	&|\{f_\alpha\}_{\alpha\in\N_0^N}|_{m,n\omega,\infty}:=\underset{\alpha\in\mathbb{N}^N_0}{\sup}\, \|\exp(n\omega)f_\alpha \|_\infty \exp\left(m\varphi^*_\omega\left(\frac{|\alpha|}{m}\right)\right)<\infty\nonumber\\
	& \left(|\{f_\alpha\}_{\alpha\in\N_0^N}|_{-m,n\omega,\infty}:= \underset{\alpha\in\mathbb{N}^N_0}{\sup}\, \|\exp(n\omega)f_\alpha \|_\infty \exp\left(-m\varphi^*_\omega\left(\frac{|\alpha|}{m}\right)\right)<\infty, \ {\rm \ resp.}\right).
	\end{align}
\end{defn}

\begin{rem}\label{R.reflSpaziSomma} It is straightforward to show that
for every  $1\leq p\leq\infty$, the space $\left(\left(\oplus L^p(\mathbb{R}^N,\exp(n\omega(x))\, dx)\right)_{\omega,m,p}, |\cdot|_{m,n\omega,p}\right)$ is a Banach space with strong dual given by  $\left(\left(\oplus L^{p'}(\mathbb{R}^N,\exp(-n\omega(x))\, dx)\right)_{\omega,-m,p'}, |\cdot|_{-m,-n\omega,p'}\right)$  for $1\leq p<\infty$ , $p'$ being the conjugate exponent of $p$. Hence,  the space $\left(\left(\oplus L^p(\mathbb{R}^N,\exp(n\omega(x))\, dx)\right)_{\omega,m,p}, |\cdot|_{m,n\omega,p}\right)$ is a reflexive Banach space  for $1<p<\infty$.
\end{rem}

%We prove the structure theorem for $(\mathcal{O}^m_{n,\omega,p}(\mathbb{R}^N))'$, following the lines of \cite[Theorem 6.2]{BN}.

\begin{thm}[Structure theorem for $(\mathcal{O}^m_{n,\omega,p}(\mathbb{R}^N), r_{m,n,p})'$]\label{struomnp}
	Let $\omega$ be a non-quasianalytic weight function, $T\in \mathcal{D}'_{\omega}(\mathbb{R}^N)$, $m\in\N$ and $n\in\Z$ and $1\leq p< \infty$. Then $T\in(\mathcal{O}^m_{n,\omega,p}(\mathbb{R}^N),r_{m,n,p})'$ if and only if 
	\begin{equation}\label{cdsomnp}
	T=\sum_{\alpha\in\mathbb{N}^N_0} \partial^\alpha f_\alpha
	\end{equation}
	with $\{f_\alpha\}_{\alpha\in\N_0^N} \subset \left(\oplus L^{p'}(\mathbb{R}^N,\exp(n\omega(x))\,dx)\right)_{\omega,m,p'}$, $p'$ being the exponent conjugate of $p$.
\end{thm}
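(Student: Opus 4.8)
The plan is to prove the characterization by a duality argument that realizes $\cO^m_{n,\omega,p}(\R^N)$ as a closed subspace of the Banach space $\left(\oplus L^p(\R^N,\exp(-n\omega(x))\,dx)\right)_{\omega,-m,p}$ via the canonical ``jet'' map, and then to combine the Hahn--Banach theorem with the explicit description of the dual of that sequence space given in Remark \ref{R.reflSpaziSomma}. I would first define the linear map
\[
J\colon \cO^m_{n,\omega,p}(\R^N)\to \left(\oplus L^p(\R^N,\exp(-n\omega(x))\,dx)\right)_{\omega,-m,p},\qquad J(f):=\{\partial^\alpha f\}_{\alpha\in\N_0^N},
\]
and observe that by the very definition of the norms $r_{m,n,p}$ in \eqref{omnp} and $|\cdot|_{-m,-n\omega,p}$ in \eqref{eq.spazioplus} we have $|J(f)|_{-m,-n\omega,p}=r_{m,n,p}(f)$, so that $J$ is an isometric embedding onto its range $R:=J(\cO^m_{n,\omega,p}(\R^N))$.

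The easier implication is the ``if'' direction: if $T=\sum_\alpha \partial^\alpha f_\alpha$ with $\{f_\alpha\}\in\left(\oplus L^{p'}(\R^N,\exp(n\omega(x))\,dx)\right)_{\omega,m,p'}$, then for $f\in\cO^m_{n,\omega,p}(\R^N)$ one computes, after integration by parts,
\[
\langle T,f\rangle=\sum_{\alpha}(-1)^{|\alpha|}\int_{\R^N} f_\alpha\,\partial^\alpha f\,dx,
\]
and H\"older's inequality (pairing $\exp(n\omega)f_\alpha$ against $\exp(-n\omega)\partial^\alpha f$ in each summand, then applying the discrete H\"older inequality in $\alpha$ with the weights $\exp(\pm m\varphi^*_\omega(|\alpha|/m))$) bounds this by $|\{f_\alpha\}|_{m,n\omega,p'}\,r_{m,n,p}(f)$. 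Thus $T$ extends to a continuous functional on $\cO^m_{n,\omega,p}(\R^N)$; one must check separately that $T$, a priori an element of $\cD'_\omega(\R^N)$, indeed agrees with this pairing, which follows since $\cD_\omega(\R^N)$ is dense in $\cO^m_{n,\omega,p}(\R^N)$ (the density being inherited from the density statements in Theorem \ref{T.incl} together with Remark \ref{Omnp.rappS}, or proved directly by truncation and mollification).

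For the ``only if'' direction, given $T\in(\cO^m_{n,\omega,p}(\R^N))'$, transport it through the isometry $J$: the functional $T\circ J^{-1}$ is defined and continuous on the subspace $R$, and by Hahn--Banach it extends to a continuous functional $\Lambda$ on the whole space $\left(\oplus L^p(\R^N,\exp(-n\omega(x))\,dx)\right)_{\omega,-m,p}$ with the same norm. By Remark \ref{R.reflSpaziSomma} this dual is exactly $\left(\oplus L^{p'}(\R^N,\exp(n\omega(x))\,dx)\right)_{\omega,m,p'}$, so $\Lambda$ is represented by a sequence $\{f_\alpha\}_{\alpha\in\N_0^N}$ in that space, meaning $\Lambda(\{g_\alpha\})=\sum_\alpha (-1)^{|\alpha|}\int f_\alpha g_\alpha\,dx$ (the sign is a harmless normalization). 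Evaluating on $g_\alpha=\partial^\alpha f$ recovers $\langle T,f\rangle=\sum_\alpha(-1)^{|\alpha|}\int f_\alpha\partial^\alpha f\,dx$, which is precisely the distributional identity $T=\sum_\alpha\partial^\alpha f_\alpha$ tested against $f\in\cD_\omega(\R^N)$; since $\cD_\omega(\R^N)$ is dense, this determines $T$ as an element of $\cD'_\omega(\R^N)$ and establishes \eqref{cdsomnp}.

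I expect the main obstacle to be the justification of the interchange of summation and the distributional interpretation of the series $\sum_\alpha\partial^\alpha f_\alpha$ — namely verifying that this series converges in $\cD'_\omega(\R^N)$ and that the functional obtained from Hahn--Banach really coincides, as an ultradistribution, with the one determined by $T$. This requires the density of $\cD_\omega(\R^N)$ in $\cO^m_{n,\omega,p}(\R^N)$ (so that a functional is determined by its values on test functions) and a check that the growth of the $f_\alpha$, controlled by membership in $\left(\oplus L^{p'}\right)_{\omega,m,p'}$, is enough to make the series act continuously on $\cD_\omega(\R^N)$; both are governed by the convexity properties of $\varphi^*_\omega$ recorded in Lemma \ref{L.Pfistar}, in particular \eqref{firstprop}, which absorb the extra factors arising when one redistributes derivatives. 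The remaining steps are routine applications of H\"older's inequality and the reflexivity/duality already isolated in Remark \ref{R.reflSpaziSomma}.
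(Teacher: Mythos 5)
Your proposal is correct and is essentially the paper's own proof: the same isometric jet embedding $J$ of $\cO^m_{n,\omega,p}(\R^N)$ into $\left(\oplus L^{p}(\R^N,\exp(-n\omega(x))\,dx)\right)_{\omega,-m,p}$, the same double H\"older estimate for the ``if'' direction, and the same Hahn--Banach extension combined with the duality of Remark \ref{R.reflSpaziSomma} for the ``only if'' direction (the missing signs $(-1)^{|\alpha|}$ in your $J$ are indeed harmless). The one point where you diverge is the appeal to density of $\cD_\omega(\R^N)$ in the Banach space $\cO^m_{n,\omega,p}(\R^N)$ --- a density that is neither established in the paper nor obviously true for a fixed index $m$ (Theorem \ref{T.incl} only concerns the limit spaces, and a truncation argument as in Proposition \ref{propdlpn}(3) requires control of a stronger norm index) --- but this step is superfluous: the hypothesis $T=\sum_\alpha\partial^\alpha f_\alpha$ in $\cD'_\omega(\R^N)$ already \emph{means} that the pairing formula holds on $\cD_\omega(\R^N)$, and conversely the representation produced by Hahn--Banach, once restricted to $\cD_\omega(\R^N)$, is exactly the identity \eqref{cdsomnp}, so neither implication needs any density.
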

\begin{proof} We treat only the case $1<p<\infty$. The case $p=1$ follows in a similar way.
	
	Let $T\in \mathcal{D}'_{\omega}(\mathbb{R}^N)$ be of the form $(\ref{cdsomnp})$.  We claim that  the linear functional defined by
	\begin{equation*}
	\langle T,\phi \rangle:=\sum_{\alpha\in\mathbb{N}^N_0} (-1)^{|\alpha|}\langle f_\alpha, \partial^\alpha \phi \rangle, \quad  \phi\in \mathcal{O}^m_{n,\omega,p}(\mathbb{R}^N),
	\end{equation*}
	belongs to $(\mathcal{O}^m_{n,\omega,p}(\mathbb{R}^N),r_{m,n,p})'$.
	Indeed, in view of H\"older inequality, we obtain for every $\phi\in \mathcal{O}^m_{n,\omega,p}(\mathbb{R}^N)$ that
	\begin{align*}
	|\langle T,\phi \rangle|&=\sum_{\alpha\in\mathbb{N}^N_0} |\langle f_\alpha, \partial^\alpha \phi \rangle| \\& \leq \sum_{\alpha\in\mathbb{N}^N_0} \|\exp(n\omega)f_\alpha\|_{p'} \|\exp(-n\omega)\partial^\alpha \phi\|_{p} \exp\left(-m\varphi^*_\omega\left(\frac{|\alpha|}{m}\right)+m\varphi^*_\omega\left(\frac{|\alpha|}{m}\right)\right)\\&\leq \left(\sum_{\alpha\in\mathbb{N}^N_0} \|\exp(n\omega)f_\alpha\|_{p'}^{p'} \exp\left(p'm\varphi_\omega^*\left(\frac{|\alpha|}{m}\right)\right)\right)^{\frac{1}{p'}}\times \\&\times\left(\sum_{\alpha\in\mathbb{N}^N_0} \|\exp(-n\omega)\partial^\alpha \phi\|_{p}^{p} \exp\left(-pm\varphi^*_\omega\left(\frac{|\alpha|}{m}\right)\right)\right)^{\frac{1}{p}}\\
	&=|\{f_\alpha\}_{\alpha\in\N_0^N}|_{m,n\omega,p'} r_{m,n,p}(\phi).
	\end{align*}
 This shows that $(\mathcal{O}^m_{n,\omega,p}(\mathbb{R}^N), r_{m,n,p})'$. So, the claim is proved.

	Conversely, suppose that $T\in(\mathcal{O}^m_{n,\omega,p}(\mathbb{R}^N), r_{m,n,p})'$. Then there exists a constant $C>0$ so that
	\begin{equation*}
	|\langle T,\phi \rangle|\leq Cr_{m,n,p}(\phi)=C\left(\sum_{\alpha\in\mathbb{N}^N_0}  \|\exp(-n\omega)\partial^\alpha \phi \|_{p}^{p} \exp\left(-mp\varphi^*_\omega\left(\frac{|\alpha|}{m}\right)\right)\right)^{\frac{1}{p}}
	\end{equation*}
	for each $\phi \in \mathcal{O}^m_{n,\omega,p}(\mathbb{R}^N)$. We now observe
	% We introduce the Banach space $\left(\oplus L^{p}(\mathbb{R}^N,\exp(-n\omega(x)))\, dx\right)_{\omega,m,p}$ as the set of all the sequences $\{g_\alpha\}_\alpha \subset L^{p}(\mathbb{R}^N)$ such that
%	\begin{equation}
%	\|(g_\alpha)_{\alpha}\|_{p,n}^{p}:=\sum_{\alpha\in\mathbb{N}^N_0}\, \|\exp(-n\omega)g_\alpha \|_{p}^{p} \exp\left(-mp\varphi^*_\omega\left(\frac{|\alpha|}{p}\right)\right)<\infty,
%	\end{equation}
 that the linear operator 
	\begin{equation*}
	J: \, (\mathcal{O}^m_{n,\omega,p}(\mathbb{R}^N), \, r_{m,n,p})\to \left(\left(\oplus L^{p}(\mathbb{R}^N,\exp(-n\omega(x))dx)\right)_{\omega,-m,p},  |\cdot|_{-m,-n\omega,p}\right)
	\end{equation*}
	defined by 
	$J(\phi):=  \{(-1)^{|\alpha|}\partial^\alpha \phi\}_ {\alpha\in\N_0^N}$ for $\phi\in \mathcal{O}^m_{n,\omega,p}(\mathbb{R}^N)$, 
	is an isometry and so a one-to-one operator, as
	\begin{equation*}
|J(\phi)|_{-m,-n\omega,p}=\left(\sum_{\alpha\in\mathbb{N}^N_0}\, \|\exp(-n\omega)\partial^\alpha \phi \|_{p}^{p} \exp\left(-pm\varphi^*_\omega\left(\frac{|\alpha|}{m}\right)\right)\right)^{\frac{1}{p}}=r_{m,n,p}(\phi).
	\end{equation*}
	Let $G:=J(\mathcal{O}^m_{n,\omega,p}(\mathbb{R}^N))$ and define on $G$ the linear functional  $\langle F,\{\phi_\alpha\}_{\alpha\in\N_0^N}\rangle:=\langle T, J^{-1}(\{\phi_\alpha\}_{\alpha\in\N_0^N})\rangle$ for  $\phi \in G$. Obviously, $F\in (G,|\cdot|_{-m,-n\omega,p})'$. In particular, we have for every $\{\phi_\alpha\}_{\alpha\in\N_0^N}\in G$ that 
	\begin{align*}
	|\langle F, \{\phi_\alpha\}_{\alpha\in\N_0^N}\rangle|&=|\langle T, J^{-1}(\{\phi_\alpha\}_{\alpha\in\N_0^N})\rangle|\leq Cr_{m,n,p}(J^{-1}(\{\phi_\alpha\}_{\alpha\in\N_0^N}))\\&=C|\{\phi_\alpha\}_{\alpha\in\N_0^n}|_{-m,-n\omega,p}.
	\end{align*}
	Thanks to the Hahn-Banach theorem, $F$ admits a continuous linear extension on $\left(\oplus L^{p}(\mathbb{R}^N,\exp(-n\omega(x)))\right)_{\omega,m,p}$ with the same norm. Denoting such an extension by $\tilde{F}$, we obtain that $\tilde{F}\in \left(\oplus L^{p'}(\mathbb{R}^N,\exp(n\omega(x))dx) \right)_{\omega,m,p'}$.
	This means  that there exists  $\{f_\alpha\}_{\alpha\in\N_0^N} \in \left(\oplus L^{p'}(\mathbb{R}^N,\exp(n\omega(x))dx)\right)_{\omega,m,p'}$ such that  $\tilde{F}= \sum_{\alpha\in\mathbb{N}^N_0} f_\alpha$, i.e., 
	\begin{equation*}
	\langle \tilde{F},\{\phi_\alpha\}_{\alpha\in\N_0^N} \rangle= \sum_{\alpha\in\mathbb{N}^N_0}\langle f_\alpha,\phi_\alpha\rangle= \sum_{\alpha\in\mathbb{N}^N_0}\int_{\mathbb{R}^N} f_\alpha(x) \phi_\alpha (x)\,dx
	\end{equation*}
	for every $\{\phi_\alpha\}_{\alpha\in\N_0^N}\in \left(\oplus L^{p}(\mathbb{R}^N,\exp(-n\omega(x))dx)\right)_{\omega,-m,p}$. So, for  $\phi \in \mathcal{O}^m_{n,\omega,p}(\mathbb{R}^N)$ we have
	\begin{align*}
	\langle T,\phi \rangle&= \langle F, \{(-1)^{|\alpha|}\partial^\alpha \phi\}_ {\alpha\in\N_0^N}\rangle=\langle \tilde{F}, \{(-1)^{|\alpha|}\partial^\alpha \phi\}_ {\alpha\in\N_0^N}\rangle\\
	&=  \sum_{\alpha\in\mathbb{N}^N_0}\int_{\mathbb{R}^N} (-1)^{|\alpha|}f_\alpha(x) \partial^\alpha \phi(x) dx= \sum_{\alpha\in\mathbb{N}^N_0}\int_{\mathbb{R}^N} \partial^\alpha f_\alpha(x) \phi(x) dx=\sum_{\alpha\in\mathbb{N}^N_0} \langle \partial^\alpha f_\alpha, \phi \rangle.
	\end{align*}
\end{proof}

As application of Theorem \ref{struomnp} and Proposition \ref{incoc}, we deduce  the following representations.

%\begin{thm}[Structure theorem for $(\cD_{L^p_n,\omega}(\R^N))'$]
%	Let $\omega$ be a non-quasianalytic weight function, $T\in \mathcal{D}'_{\omega}(\mathbb{R}^N)$ and $1\leq p< \infty$. Then $T\in (\cD_{L^p_n,\omega}(\R^N))'$, if and only if there exists $m\in\mathbb{N}$ such that
%	\begin{equation}\label{cdsdlpn}
%	T=\sum_{\alpha\in\mathbb{N}^N_0} \partial^\alpha f_\alpha
%	\end{equation}
%	with $\{f_\alpha\}_\alpha \subset \left(\oplus L^{p'}(\mathbb{R}^N,\exp(-n\omega(x))\,dx)\right)_{\omega,-m,p'}$.
%\end{thm}

\begin{thm}[Structure theorem for $\mathcal{O}'_{M,\omega}(\mathbb{R}^N)$]\label{struom}
	Let $\omega$ be a non-quasianalytic weight function, $T\in \mathcal{D}'_{\omega}(\mathbb{R}^N)$ and $1\leq p<\infty$. Then $T\in \mathcal{O}'_{M,\omega}(\mathbb{R}^N)$ if and only if there exists $m\in\mathbb{N}$ such that for every  $n\in\mathbb{N}$ there exists  $\{f_\alpha\}_{\alpha\in\N_0^N} \subset \left(\oplus L^{p'}(\mathbb{R}^N,\exp(n\omega(x))\,dx)\right)_{\omega,m,p'}$, $p'$ being the conjugate exponent of $p$, such that 
	\begin{equation}
	T=\sum_{\alpha\in\mathbb{N}^N_0} \partial^\alpha f_\alpha.
	\end{equation}
\end{thm}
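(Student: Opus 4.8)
The plan is to derive the structure theorem for $\cO'_{M,\omega}(\R^N)$ by combining the recollection of $\cO_{M,\omega}(\R^N)$ as a projective limit of (LB)-spaces with the local structure theorem (Theorem \ref{struomnp}) and the duality $\cO'_{M,\omega}(\R^N)=\cO'_{M,\omega,p}(\R^N)$ from Corollary \ref{C.Duali}. By Proposition \ref{incoc} we may work with the $L^p$-realization, so that
\[
\cO_{M,\omega,p}(\R^N)=\proj_{\stackrel{\leftarrow}{m}}\Big(\ind_{\stackrel{\rightarrow}{n}}\cO^m_{n,\omega,p}(\R^N)\Big).
\]
The guiding idea is that dualizing a projective limit of (LB)-spaces yields an inductive limit of the duals, and dualizing an inductive limit yields a projective limit; thus the dual should be an \emph{inductive} limit (over $m$) of \emph{projective} limits (over $n$) of the spaces $(\cO^m_{n,\omega,p}(\R^N))'$, each of which Theorem \ref{struomnp} identifies with series $\sum_\alpha \partial^\alpha f_\alpha$ having $\{f_\alpha\}\in(\oplus L^{p'}(\exp(n\omega)dx))_{\omega,m,p'}$. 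The statement's quantifier pattern ``there exists $m$ such that for every $n$'' matches exactly this $\ind_m\proj_n$ shape.

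First I would fix the easy (sufficiency) direction. Given $m\in\N$ and, for every $n\in\N$, a representation $T=\sum_\alpha \partial^\alpha f_\alpha$ with $\{f_\alpha\}\in(\oplus L^{p'}(\exp(n\omega)dx))_{\omega,m,p'}$, the computation already carried out in the proof of Theorem \ref{struomnp} shows that $T$ defines a continuous functional on each $\cO^m_{n,\omega,p}(\R^N)$, hence on $\bigcup_n\cO^m_{n,\omega,p}(\R^N)$, and therefore on $\cO_{M,\omega,p}(\R^N)=\bigcap_m\bigcup_n\cO^m_{n,\omega,p}(\R^N)$ via the continuous projection onto the $m$-th step. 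By Corollary \ref{C.Duali} this places $T\in\cO'_{M,\omega}(\R^N)$.

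For the converse (necessity) I would start from $T\in\cO'_{M,\omega}(\R^N)=\cO'_{M,\omega,p}(\R^N)$ and use the standard fact that a continuous functional on a reduced projective limit factors through one of the steps: there exists $m\in\N$ and a constant $C>0$ with $|\langle T,\phi\rangle|\le C\,\pi_m(\phi)$, where $\pi_m$ is the quotient seminorm coming from the $m$-th (LB)-step $\bigcup_n\cO^m_{n,\omega,p}(\R^N)$. Consequently $T$ extends to a continuous functional on this (LB)-space, and dualizing the inductive limit gives $T\in\big(\bigcup_n\cO^m_{n,\omega,p}(\R^N)\big)'=\proj_n\big(\cO^m_{n,\omega,p}(\R^N)\big)'$; that is, $T$ restricts to a continuous functional on every $\cO^m_{n,\omega,p}(\R^N)$ with this fixed $m$. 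Applying Theorem \ref{struomnp} for each $n$ then produces, for every $n\in\N$, a family $\{f_\alpha\}\in(\oplus L^{p'}(\exp(n\omega)dx))_{\omega,m,p'}$ with $T=\sum_\alpha\partial^\alpha f_\alpha$, which is the claimed representation.

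The main obstacle will be justifying the factorization ``$T$ continuous on $\proj_m$ implies $T$ factors through a single step $m$'' cleanly and confirming that the resulting functional on the $m$-th (LB)-space is genuinely continuous for the inductive topology, so that its restriction to each Banach step $\cO^m_{n,\omega,p}(\R^N)$ is bounded. This is where one must invoke that $\cO_{M,\omega,p}(\R^N)$ carries the reduced projective limit topology and that dualities of (LB)-spaces behave as $\big(\ind_n E_n\big)'=\proj_n E_n'$ with the correct identifications; the remaining work—namely that the projective seminorm factoring gives a bound of the form $|\langle T,\phi\rangle|\le C\,r_{m,n,p}(\phi)$ uniformly usable in Theorem \ref{struomnp}—is then essentially bookkeeping with the already-established estimate in the proof of Theorem \ref{struomnp}. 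I would take care that the $m$ obtained is a single index independent of $n$ (it is, since it comes from the projective-limit factorization), matching the quantifier order in the statement.
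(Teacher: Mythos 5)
Your overall strategy is exactly the one the paper intends: the paper states Theorem \ref{struom} with no written proof, presenting it as an application of Theorem \ref{struomnp} and Proposition \ref{incoc}, which is what you flesh out. Your necessity direction is correct and complete in outline: after passing to $\cO_{M,\omega,p}(\R^N)$ via Corollary \ref{C.Duali}, the projective spectrum over $m$ is directed, so a continuous functional is dominated by a continuous seminorm pulled back from a single (LB)-step $\ind_{\stackrel{\rightarrow}{n}}\cO^m_{n,\omega,p}(\R^N)$; Hahn--Banach then extends $T$ to that step, the extension restricts continuously to every Banach space $\cO^m_{n,\omega,p}(\R^N)$ with the \emph{same} $m$, and Theorem \ref{struomnp} produces the representations. (Reducedness is not actually needed for this direction, only directedness.) The part you flagged as the "main obstacle" is in fact the unproblematic part.

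The genuine gap is in the sufficiency direction, at the words ``hence on $\bigcup_n\cO^m_{n,\omega,p}(\R^N)$''. For each $n$, Theorem \ref{struomnp} gives you a continuous functional $T_n:=\sum_\alpha\partial^\alpha f^{(n)}_\alpha$ on the Banach space $\cO^m_{n,\omega,p}(\R^N)$, but these are a priori \emph{different} functionals built from different coefficient families, and a family of functionals, one on each step, defines a functional on the inductive limit only if they are coherent, i.e.\ $T_{n'}|_{\cO^m_{n,\omega,p}(\R^N)}=T_n$ for $n\le n'$. All you know is that every $T_n$ agrees with $T$ on $\cD_\omega(\R^N)$, and this does not settle coherence because $\cD_\omega(\R^N)$ is not known to be dense in a \emph{single} Banach step: the paper's density result, Proposition \ref{propdlpn}(3), concerns the Fr\'echet intersection $\cD_{L^p_{-n},\omega}(\R^N)=\bigcap_m\cO^m_{n,\omega,p}(\R^N)$, and its truncation argument unavoidably loses the index ($m\to Lm$, via \eqref{firstprop}), so it cannot be run inside one fixed $\cO^m_{n,\omega,p}(\R^N)$; indeed an element of $\cO^m_{n,\omega,p}(\R^N)$ need not even belong to $\cE_\omega(\R^N)$, so its truncations are not test functions. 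The gap is repairable with an extra argument: convolving with a mollifier $\rho_\delta\in\cD_\omega(\R^N)$ maps $\cO^m_{n,\omega,p}(\R^N)$ into $\cD_{L^p_{-n''},\omega}(\R^N)$ for a shifted index $n''$ (roughly $Kn$ plus a constant, by condition $(\alpha)$), with convergence in the norm $r_{m,n'',p}$; combining this with Proposition \ref{propdlpn}(3) shows that $\cO^m_{n,\omega,p}(\R^N)$ is contained in the closure of $\cD_\omega(\R^N)$ inside $\cO^m_{n'',\omega,p}(\R^N)$, whence $T_{n_1}=T_{n_2}$ on $\cO^m_{n,\omega,p}(\R^N)$ for all $n_1,n_2\ge n''$ by uniqueness of continuous extensions from dense subspaces. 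Only after this coherence step do the $T_n$ paste to a single functional on $\bigcup_n\cO^m_{n,\omega,p}(\R^N)$, continuous on each step and hence on the (LB)-space, whose restriction along the continuous inclusion $\cO_{M,\omega,p}(\R^N)\hookrightarrow\ind_{\stackrel{\rightarrow}{n}}\cO^m_{n,\omega,p}(\R^N)$ is the desired continuous extension of $T$. As written, your sufficiency argument skips this step, and without it the implication is not proved.
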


\begin{thm}[Structure theorem for $\mathcal{O}'_{C,\omega}(\mathbb{R}^N)$]\label{struoc}
	Let $\omega$ be a non-quasianalytic weight function, $T\in \mathcal{D}'_{\omega}(\mathbb{R}^N)$ and $1\leq p<\infty$. Then $T\in \mathcal{O}'_{C,\omega}(\mathbb{R}^N)$ if and only if for each $n\in\mathbb{N}$ there exist $m\in\mathbb{N}$ and  $\{f_\alpha\}_{\alpha\in\N_0^N} \subset \left(\oplus L^{p'}(\mathbb{R}^N,\exp(n\omega(x))\,dx)\right)_{\omega,m,p'}$, $p'$ being the conjugate exponent of $p$,  such that
	\begin{equation}
	T=\sum_{\alpha\in\mathbb{N}^N_0} \partial^\alpha f_\alpha.
	\end{equation}
\end{thm}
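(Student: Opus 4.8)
The plan is to deduce the structure theorem for $\mathcal{O}'_{C,\omega}(\mathbb{R}^N)$ by combining the identification $\cO_{C,\omega}(\R^N)=\cO_{C,\omega,p}(\R^N)$ from Proposition \ref{incoc}(2) with the fiberwise structure theorem for $(\mathcal{O}^m_{n,\omega,p}(\mathbb{R}^N),r_{m,n,p})'$ established in Theorem \ref{struomnp}, reading off the correct arrangement of quantifiers from the $(LF)$-structure $\cO_{C,\omega,p}(\R^N)=\ind_{\stackrel{\rightarrow}{n}}\proj_{\stackrel{\leftarrow}{m}}\cO_{n,\omega,p}^m(\R^N)$. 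Since $\cO'_{C,\omega}(\R^N)=\cO'_{C,\omega,p}(\R^N)$ algebraically and topologically (Corollary \ref{C.Duali}(2)), it suffices to characterize the continuous functionals on the $(LF)$-space $\cO_{C,\omega,p}(\R^N)$.

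First I would argue the sufficiency direction. Suppose that for each $n\in\N$ there exist $m=m(n)\in\N$ and $\{f_\alpha\}\subset \left(\oplus L^{p'}(\R^N,\exp(n\omega)\,dx)\right)_{\omega,m,p'}$ with $T=\sum_\alpha \partial^\alpha f_\alpha$. Fixing $n$ and the associated $m$, Theorem \ref{struomnp} shows that $T$ defines a continuous linear functional on $(\mathcal{O}^m_{n,\omega,p}(\mathbb{R}^N),r_{m,n,p})$, hence on the projective limit $\bigcap_{m=1}^\infty \cO^m_{n,\omega,p}(\R^N)=\proj_{\stackrel{\leftarrow}{m}}\cO^m_{n,\omega,p}(\R^N)$ via the continuous inclusion into the $m$-th step. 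Because this holds for every $n$, and the inductive limit topology on $\cO_{C,\omega,p}(\R^N)$ is the finest locally convex topology making all the inclusions $\bigcap_m \cO^m_{n,\omega,p}(\R^N)\hookrightarrow \cO_{C,\omega,p}(\R^N)$ continuous, the universal property of the $(LF)$-space yields $T\in\cO'_{C,\omega,p}(\R^N)=\cO'_{C,\omega}(\R^N)$.

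For the necessity direction I would dualize the $(LF)$-structure: a functional $T\in\cO'_{C,\omega,p}(\R^N)$ restricts, for each $n$, to a continuous functional on the step space $\bigcap_{m=1}^\infty \cO^m_{n,\omega,p}(\R^N)$. By the standard description of the dual of a reduced projective limit of Banach spaces, such a functional factors through one of the steps: there is $m=m(n)$ and a constant so that $|\langle T,\phi\rangle|\leq C\,r_{m,n,p}(\phi)$ for all $\phi\in\cO^m_{n,\omega,p}(\R^N)$, i.e.\ $T\in(\mathcal{O}^m_{n,\omega,p}(\mathbb{R}^N),r_{m,n,p})'$. Applying Theorem \ref{struomnp} to this $m,n$ produces the required family $\{f_\alpha\}\subset\left(\oplus L^{p'}(\R^N,\exp(n\omega)\,dx)\right)_{\omega,m,p'}$ with $T=\sum_\alpha \partial^\alpha f_\alpha$, and since $n$ was arbitrary this gives exactly the quantifier pattern ``for each $n$ there exist $m$ and $\{f_\alpha\}$'' in the statement. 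The argument is the mirror image of the proof of Theorem \ref{struom}, where the roles of the two quantifiers are swapped because $\cO_{M,\omega}$ carries the $\proj\text{-}\ind$ topology rather than the $\ind\text{-}\proj$ topology.

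The main obstacle is the necessity step, specifically justifying that a functional continuous on the Fr\'echet space $\proj_{\stackrel{\leftarrow}{m}}\cO^m_{n,\omega,p}(\R^N)$ actually extends to, or factors through, a single Banach step $\cO^m_{n,\omega,p}(\R^N)$ so that Theorem \ref{struomnp} applies verbatim. This requires knowing that the projective spectrum is reduced (so that its dual is the union of the duals of the steps), together with care that the representation $T=\sum_\alpha\partial^\alpha f_\alpha$ is understood as an identity in $\cD'_\omega(\R^N)$ obtained by the same Hahn--Banach plus isometric-embedding mechanism as in Theorem \ref{struomnp}. The weighted summability $\{f_\alpha\}\in\left(\oplus L^{p'}\right)_{\omega,m,p'}$ is then immediate from that theorem, so no new estimate is needed; the delicate point is purely the functional-analytic bookkeeping of the $(LF)$-dual.
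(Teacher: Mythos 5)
Your proposal is correct and follows essentially the same route as the paper, which derives Theorem \ref{struoc} precisely as an application of Theorem \ref{struomnp} together with Proposition \ref{incoc} (and Corollary \ref{C.Duali}), dualizing the $(LF)$-structure $\cO_{C,\omega,p}(\R^N)=\ind_{\stackrel{\rightarrow}{n}}\proj_{\stackrel{\leftarrow}{m}}\cO^m_{n,\omega,p}(\R^N)$ to obtain the quantifier pattern ``for each $n$ there exists $m$.'' The functional-analytic bookkeeping you flag (factorization of a functional on the Fr\'echet step through a single Banach step, via the increasing norms $r_{m,n,p}$ and Hahn--Banach, with consistency on the dense subspace $\cD_\omega(\R^N)$) is exactly what the paper leaves implicit.
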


Finally, Theorem \ref{struomnp} combined with Remark \ref{Omnp.rappS} also implies the following representation.

\begin{thm}[Structure theorem for $\mathcal{S}'_{\omega}(\mathbb{R}^N)$]\label{stru}
	Let $\omega$ be a non-quasianalytic weight function, $T\in \mathcal{D}'_{\omega}(\mathbb{R}^N)$ and $1\leq p< \infty$. Then $T\in \mathcal{S}'_{\omega}(\mathbb{R}^N)$ if and only if there exist $m\in\mathbb{N}$  and with $\{f_\alpha\}_{\alpha\in\N_0^N} \subset \left(\oplus L^{p'}(\mathbb{R}^N,\exp(-m\omega(x))\,dx)\right)_{\omega,m,p'}$ such that
	\begin{equation}\label{cds}
	T=\sum_{\alpha\in\mathbb{N}^N_0} \partial^\alpha f_\alpha.
	\end{equation}
\end{thm}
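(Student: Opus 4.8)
The plan is to exhibit $\cS_\omega(\R^N)$ as a countable reduced projective limit of the Banach spaces $\cO^m_{-m,\omega,p}(\R^N)$ and then to dualize, reducing everything to Theorem \ref{struomnp}. The starting point is Remark \ref{Omnp.rappS}, which gives $\sigma_{m,n,p}(f)=r_{m,-n,p}(f)$ for $f\in\cS_\omega(\R^N)$ and hence $\cS_\omega(\R^N)=\bigcap_{n,m}\cO^m_{-n,\omega,p}(\R^N)$. Using the continuity of the inclusions $\cO^{m'}_{n,\omega,p}(\R^N)\hookrightarrow\cO^m_{n,\omega,p}(\R^N)$ (for $m\le m'$) and $\cO^m_{n,\omega,p}(\R^N)\hookrightarrow\cO^m_{n',\omega,p}(\R^N)$ (for $n\le n'$) recorded after Definition \ref{D.spaziOp}, I would first check that the diagonal already suffices, i.e. $\cS_\omega(\R^N)=\bigcap_{m=1}^\infty\cO^m_{-m,\omega,p}(\R^N)$: for fixed $m,n$ one picks $M\ge\max\{m,n\}$ and factors $\cO^M_{-M,\omega,p}(\R^N)\hookrightarrow\cO^m_{-M,\omega,p}(\R^N)\hookrightarrow\cO^m_{-n,\omega,p}(\R^N)$. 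The same factorization shows $r_{j,-j,p}(\phi)\le C\,r_{M,-M,p}(\phi)$ for $j\le M$, so that $\{r_{m,-m,p}\}_{m\in\N}=\{\sigma_{m,m,p}\}_{m\in\N}$ is, up to constants, an increasing fundamental system of norms for the Fr\'echet topology of $\cS_\omega(\R^N)$.

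Granting this, the two implications are essentially one application of Theorem \ref{struomnp} with $n=-m$. For the necessity, if $T\in\cS'_\omega(\R^N)$ then, because $\{r_{m,-m,p}\}_m$ is a fundamental increasing system of norms, there are $m\in\N$ and $C>0$ with $|\langle T,\phi\rangle|\le C\,r_{m,-m,p}(\phi)$ for all $\phi\in\cS_\omega(\R^N)$. Since $\cS_\omega(\R^N)$ sits in $\cO^m_{-m,\omega,p}(\R^N)$ as a subspace on which $r_{m,-m,p}$ is precisely the inducing norm, the Hahn--Banach theorem extends $T$ to $\tilde T\in(\cO^m_{-m,\omega,p}(\R^N),r_{m,-m,p})'$. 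Theorem \ref{struomnp} applied with this $m$ and with $n=-m$ then yields $\{f_\alpha\}_{\alpha\in\N_0^N}\subset\left(\oplus L^{p'}(\R^N,\exp(-m\omega(x))\,dx)\right)_{\omega,m,p'}$ with $\tilde T=\sum_{\alpha\in\N_0^N}\partial^\alpha f_\alpha$; restricting the identity to $\cD_\omega(\R^N)\subset\cS_\omega(\R^N)$ gives the asserted representation of $T$ in $\cD'_\omega(\R^N)$.

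For the sufficiency, suppose $T=\sum_{\alpha\in\N_0^N}\partial^\alpha f_\alpha$ with $\{f_\alpha\}_{\alpha\in\N_0^N}$ in that weighted sum space for some $m$. By Theorem \ref{struomnp} (again with $n=-m$) this series defines an element of $(\cO^m_{-m,\omega,p}(\R^N),r_{m,-m,p})'$, and the H\"older estimate in its proof gives $|\langle T,\phi\rangle|\le|\{f_\alpha\}_{\alpha\in\N_0^N}|_{m,-m\omega,p'}\,r_{m,-m,p}(\phi)$. Restricting to $\cS_\omega(\R^N)$ and using $r_{m,-m,p}=\sigma_{m,m,p}$ shows that $T$ is continuous for the Fr\'echet topology, hence $T\in\cS'_\omega(\R^N)$; consistency on $\cD_\omega(\R^N)$ identifies this functional with the given $T\in\cD'_\omega(\R^N)$.

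I expect no serious obstacle here: the content is entirely carried by Theorem \ref{struomnp}, and the only genuine verification is the bookkeeping in the first paragraph, namely that the diagonal spaces $\cO^m_{-m,\omega,p}(\R^N)$ (as opposed to the full doubly-indexed family $\cO^m_{-n,\omega,p}(\R^N)$) already realize $\cS_\omega(\R^N)$ topologically and that the corresponding diagonal norms form an increasing fundamental system. Everything else is the standard duality of a reduced countable projective limit of Banach spaces combined with Hahn--Banach.
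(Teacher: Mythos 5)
Your proposal is correct and follows essentially the same route as the paper, which proves this theorem precisely by combining Theorem \ref{struomnp} (applied with $n=-m$) with Remark \ref{Omnp.rappS}; you have merely filled in the routine details the paper leaves implicit (that the diagonal norms $\sigma_{m,m,p}=r_{m,-m,p}$ form an increasing fundamental system, the Hahn--Banach extension to $\cO^m_{-m,\omega,p}(\R^N)$, and the identification via restriction to $\cD_\omega(\R^N)$).
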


The structure theorems above should be compared with \cite[Theorem 3.2]{DPPV} and with \cite[Theorem 3.2]{Kov} and \cite[Theorem 2]{Kov-2} holding in the frame of ultradistributions as introduced by Komatsu \cite{K}.

\begin{rem}\label{R.rifl} Let $\omega$ be a non-quasianalytic weight function.
	
	(a) From the proof of Theorem \ref{struomnp} it follows that the space $(O_{n,\omega,p}^m, r_{m,n,p})$ is a closed subspace of $\left(\left(\oplus L^p(\mathbb{R}^N,\exp(n\omega(x))\, dx)\right)_{\omega,m,p}, |\cdot|_{m,n\omega,p}\right)$. By Remark \ref{R.reflSpaziSomma}  we can then conclude that for $1<p<\infty$,  the Banach space $(O_{n,\omega,p}^m, r_{m,n,p})$ is reflexive.
	
	(b) Let $1<p<\infty$. Then by point (a) above it follows that for every $m\in\N$ the space $\ind_{\stackrel{\rightarrow}{n}}\cO^{m}_{n,\omega,p}(\R^N)$ is a complete reflexive (LB)-space. Hence, taking in account  Proposition \ref{incoc}(1) and \cite[Theorem 5.3]{De}, the space  $\cO_{M,\omega,p}(\R^N)$ is a complete reflexive lc-space. 
	
	(c) Let $1<p<\infty$. Then by point (a) above it follows that for every $n\in\N$ the space $\proj_{\stackrel{\leftarrow}{m}}\cO^{m}_{n,\omega,p}(\R^N)$ is a reflexive Fr\'echet space. Hence, $\cO_{C,\omega,p}(\R^N)$ (and hence, $\cO_{C,\omega}(\R^N)$ by Proposition \ref{incoc}) is an inductive limit of  reflexive Fr\'echet spaces.
\end{rem}

\section{The space $\cD_{L^p_\mu,\omega}(\mathbb{R}^N)$}

In  this section we collect further necessary results in order to show that $\cO'_{C,\omega}(\R^N)$ is the space of convolutors of $\cS_{\omega}(\R^N)$ and of $\cS'_{\omega}(\R^N)$. To this end we   introduce the spaces  $\cD_{L^p_\mu,\omega}(\mathbb{R}^N)$, where $\mu\in\R$ and $1\leq p\leq \infty$. %following the approach of \cite{BE}, \cite{Ci} and \cite{GO}. 

\begin{defn}\label{D.spaziLp}
	Let $\omega$ be a non-quasianalytic weight function and let $\mu\in\R$. 
	
	(a)  We denote by $\cD_{L^p_\mu,\omega}(\mathbb{R}^N)$, for $1\leq p<\infty$,  the set of all functions $f\in {C}^\infty(\mathbb{R}^N)$ such that for every $m\in \mathbb{N}$ the following inequality is satisfied:
	\begin{equation}\label{dlp}
	t^p_{m,\mu,p}(f):=\sum_{\alpha\in \N^N_0}\,\|\exp(\mu\omega)\partial^\alpha f\|_p^p\exp \left(-pm\varphi^*_\omega\left(\frac{|\alpha|}{m}\right)\right)<\infty.		
	\end{equation}
	(b)  We denote by $\cB_{L^\infty_\mu,\omega}(\mathbb{R}^N)$  the set of all functions $f\in {C}^\infty(\mathbb{R}^N)$ such that for every $m\in \mathbb{N}$ the following inequality is satisfied:
	\begin{equation}\label{dlinfty}
	t_{m,\mu,\infty}(f):=\underset{\alpha\in\N^N_0}{\sup}\,\|\exp(\mu\omega)\partial^\alpha f\|_\infty\exp \left(-m\varphi^*_\omega\left(\frac{|\alpha|}{m}\right)\right)<\infty.		
	\end{equation}
	
	(c) We denote by $\cD_{L^{\infty}_{\mu},\omega}(\R^N)$ the subspace of $\cB_{L^\infty_\mu,\omega}(\mathbb{R}^N)$ consisting of all the functions $f$ such that $|\exp(\mu\omega(x))\partial^\alpha f(x)|\to 0$ as $|x|\to \infty$ for all $\alpha\in\N_0^N$.
	
	We denote by $\cD'_{L^p_\mu,\omega}(\mathbb{R}^N)$ (by $\cB'_{L^\infty_\mu,\omega}(\R^N)$, resp.) the strong dual of $\cD_{L^p_\mu,\omega}(\mathbb{R}^N)$ (of $\cB_{L^\infty_\mu,\omega}(\R^N)$, resp.).
	\end{defn}

The spaces $\cD_{L^p_\mu,\omega}(\R^N)$,  $
1\leq p\leq \infty$, and $\cB_{L^\infty_\mu,\omega}(\R^N)$ are always supposed to be endowed with the lc-topology generated by the sequence of norms $\{t_{m,\mu,p}\}_{m\in\N}$ and $\{t_{m,\mu,\infty}\}_{m\in\N}$, respectively. The elements
of the strong  dual $\cD'_{L^p_\mu,\omega}(\R^N)$ of $\cD_{L^p_\mu,\omega}(\R^N)$ are called ultradistributions of $L^{p'}_{\mu}$-growth, $p'$  being the conjugate exponent
 of $p$.
In the case case $\mu=0$ such spaces were already considered in \cite{BE}, \cite{Ci} and \cite{GO} and  are extensions of the classical spaces   $\cD_{L^p}(\R^N)$ and $\cB(\R^N)$ as introduced by Schwartz \cite{S} (see also \cite{BN}). 
%In particular, for every $\mu,\mu'\in\R$ such that $\mu<0<\mu'$ it is easy to see that $\cD_{L^p_{\mu'},\omega}(\R^N)\hookrightarrow \cD_{L^p,\omega}(\R^N)\hookrightarrow \cD_{L^p_{\mu},\omega}(\R^N)$ ($\cB_{L^\infty_{\mu'},\omega}(\R^N)\hookrightarrow \cB_{L^\infty,\omega}(\R^N)\hookrightarrow \cB_{L^\infty_{\mu},\omega}(\R^N)$, resp.) continuously. 
Moreover, spaces analogous to $\cD_{L^p_\mu,\omega}(\R^N)$,  $
1\leq p\leq \infty$, (to $\cB_{L^\infty_\mu,\omega}(\R^N)$, resp.) were treated in \cite{Kov-2} (in \cite{De2}, resp.),  
in the context of ultradifferentiable functions of Beurling type as introduced by Komatsu \cite{K} (of Gelfand-Shilov type spaces, resp.).

\begin{rem}\label{R.Coincidenza} Since $t_{m,\mu,p}=\sigma_{m,\mu,p}$ for $\mu>0$ and $t_{m,-n,p}=r_{m,n,p}$ for $n\in\N\,(t_{m,-n,\infty}=r_{m,n}$ for $n\in\N$), the following  equalities are valid algebraically and topologically:
	\begin{equation}\label{eq.SD}
	\cS_{\omega}(\R^N)=\bigcap_{\mu>0} \cD_{L^p_{\mu},\omega}(\mathbb{R}^N)=\bigcap_{\mu>0}\cB_{L^\infty_{\mu},\omega}(\R^N)\ \ (1\leq p\leq\infty),
	\end{equation}
	\begin{equation}\label{eq.ODp}
	\bigcap_{m=1}^\infty \cO_{n,\omega,p}^m(\R^N)=\cD_{L^p_{-n},\omega}(\mathbb{R}^N) \ \   (1\leq p<\infty,\ n\in\N),
	\end{equation}
 	\begin{equation}\label{eq.OD}
\bigcap_{m=1}^\infty \cO_{n,\omega}^m(\R^N)=\cB_{L^\infty_{-n},\omega}(\R^N) \ \ 	  (n\in\N).
	\end{equation}
	and hence,
	\begin{equation}\label{eq.OCD}
	\cO_{C,\omega}(\R^N)=\bigcup_{n=1}^\infty\cD_{L^p_{-n},\omega}(\mathbb{R}^N) =\bigcup_{n=1}^\infty\cB_{L^\infty_{-n},\omega}(\mathbb{R}^N)\ \ (1\leq p\leq\infty),
	\end{equation}
	where \eqref{eq.OCD} has been established in Proposition \ref{incoc} for $1\leq p<\infty$. For $p=\infty$, \eqref{eq.OCD} follows from the fact that if $f\in \cB_{L^\infty_{-n},\omega}(\mathbb{R}^N)$ for some $n\in\N$, then $f\in \cD_{L^\infty_{-(n+1)},\omega}(\R^N)$, as it is easy to see.
\end{rem} 
In the following, we study the  convolution operators acting on the spaces of ultradistributions of $L^p_\mu$-growth, thereby extending some results in \cite{BE} (see also \cite{GO}) in the setting of the weigthed spaces $\cD_{L^p_\mu, \omega}(\R^N)$. We begin by collecting some useful properties about them.
 
 \begin{prop}\label{propdlpn}
	Let $\omega$ be a non-quasianalytic weight function and $1\leq p\leq \infty$. Then the following properties are satisfied.
	\begin{enumerate}
		\item For every $\mu\in\R$ the spaces $\cD_{L^p_\mu,\omega}(\mathbb{R}^N)$ and $\cB_{L^\infty_\mu,\omega}(\R^N)$ are    Fr\'echet spaces.
		\item For every $\mu,\mu'\in \R$ with $\mu< \mu'$ the inclusions  $\cD_{L^p_{\mu'},\omega}(\mathbb{R}^N)\hookrightarrow \cD_{L^p_{\mu},\omega}(\mathbb{R}^N)$ and $\cB_{L^\infty_{\mu'},\omega}(\mathbb{R}^N)\hookrightarrow \cB_{L^\infty_{\mu},\omega}(\mathbb{R}^N)$ are continuous.
		\item  For every $\mu\in\R$ the inclusion $\cD_{\omega}(\mathbb{R}^N)\hookrightarrow \cD_{L^p_\mu,\omega}(\R^N)$ is continuous with dense range.
		\item  For every $\mu\in\R$ the inclusions $\cD_{L^p_\mu,\omega}(\mathbb{R}^N)\hookrightarrow \cE_{\omega}(\mathbb{R}^N)$ and $\cB_{L^\infty_\mu,\omega}(\mathbb{R}^N)\hookrightarrow \cE_{\omega}(\mathbb{R}^N)$ are continuous with dense range.
	\end{enumerate}
\end{prop}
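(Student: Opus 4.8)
The plan is to handle the four assertions in order of increasing difficulty, reducing each to the weighted seminorm estimates already at our disposal. Throughout, recall that $\omega\ge 0$, so that on every compact set $\exp(\mu\omega)$ is bounded above and below by positive constants, and that the defining family $\{t_{m,\mu,p}\}_{m\in\N}$ is increasing in $m$ (because $m\varphi^*_\omega(|\alpha|/m)$ is decreasing in $m$, as $\varphi^*_\omega(t)/t$ is increasing), hence a genuine fundamental system of norms.

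Assertion (2) requires no computation: if $\mu<\mu'$ then $\mu\omega\le\mu'\omega$ pointwise, so $\exp(\mu\omega)\le\exp(\mu'\omega)$ and therefore $t_{m,\mu,p}(f)\le t_{m,\mu',p}(f)$ for every $m$ and every $f$. This gives both the inclusions $\cD_{L^p_{\mu'},\omega}(\R^N)\su\cD_{L^p_{\mu},\omega}(\R^N)$ and $\cB_{L^\infty_{\mu'},\omega}(\R^N)\su\cB_{L^\infty_{\mu},\omega}(\R^N)$ and their continuity with constant $1$. For assertion (1), metrizability and local convexity are clear from the countable increasing norm system, so only completeness is at stake. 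Given a Cauchy sequence $(f_j)$, for each fixed $\alpha$ the sequence $\exp(\mu\omega)\partial^\alpha f_j$ is Cauchy in $L^p$ (it is controlled by a single seminorm), hence $L^p$-convergent; dividing by the locally positive weight yields $L^p_{loc}$-convergence of $\partial^\alpha f_j$, and Proposition \ref{PropMorrey} (or ordinary Morrey on compacta) upgrades this to locally uniform convergence of every derivative. Thus the limit $f$ is $C^\infty$ with $\partial^\alpha f_j\to\partial^\alpha f$ locally uniformly, and lower semicontinuity of the weighted $L^p$-norms under pointwise limits (Fatou) gives $f\in\cD_{L^p_\mu,\omega}(\R^N)$ together with $t_{m,\mu,p}(f-f_j)\to0$. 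The same argument yields completeness of $\cB_{L^\infty_\mu,\omega}(\R^N)$, and since the decay condition $\exp(\mu\omega)\partial^\alpha f\to0$ at infinity is stable under uniform limits, $\cD_{L^\infty_\mu,\omega}(\R^N)$ is a closed subspace of $\cB_{L^\infty_\mu,\omega}(\R^N)$, hence itself Fr\'echet.

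Assertion (4), together with the continuity half of (3), consists of estimating one norm family by another. For the inclusion into $\cE_\omega(\R^N)$ fix a compact $K$ and $\lambda\in\N$: in the case $p=\infty$ one has directly $p_{K,\lambda}(f)\le(\sup_K\exp(-\mu\omega))\,t_{\lambda,\mu,\infty}(f)$, while for $p<\infty$ one first passes from the local $L^p$-norm to the local sup-norm by Morrey's embedding (choosing $k$ with $kp>N$, and using that on $K$ the weight is comparable to a constant), turning $\sup_K|\partial^\alpha f|$ into a constant times $\sum_{|\gamma|\le k}\|\exp(\mu\omega)\partial^{\alpha+\gamma}f\|_p$; the index shift $|\alpha|\rightsquigarrow|\alpha+\gamma|$ handled by \eqref{secondprop} then bounds $p_{K,\lambda}(f)$ by $C\,t_{2\lambda,\mu,p}(f)$, exactly as in Proposition \ref{PropInclp}. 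For the continuity of $\cD_\omega(\R^N)\hookrightarrow\cD_{L^p_\mu,\omega}(\R^N)$, given $f\in\cD_\omega(K)$ one estimates $\|\exp(\mu\omega)\partial^\alpha f\|_p\le(\sup_K\exp(\mu\omega))\,|K|^{1/p}\,\|\partial^\alpha f\|_\infty\le C_K\,p_{K,\lambda}(f)\exp(\lambda\varphi^*_\omega(|\alpha|/\lambda))$; choosing $\lambda\ge mL$ and invoking \eqref{firstprop} makes $\exp(\lambda\varphi^*_\omega(|\alpha|/\lambda)-m\varphi^*_\omega(|\alpha|/m))\le C2^{-|\alpha|}$ summable, so $t_{m,\mu,p}(f)\le C'\,p_{K,\lambda}(f)$, which is continuity on each step $\cD_\omega(K)$ and hence on the inductive limit $\cD_\omega(\R^N)$; the analogue for $p=\infty$ into $\cB_{L^\infty_\mu,\omega}(\R^N)$ is identical.

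It remains to prove the density statements, which is where the real work lies. Density in $\cE_\omega(\R^N)$ (part (4)) comes for free: by (3) and the trivially checked analogue, $\cD_\omega(\R^N)$ embeds continuously in each of our spaces, and $\cD_\omega(\R^N)$ is already dense in $\cE_\omega(\R^N)$ (Remark \ref{in s}(3)), so the intermediate space is dense too. The genuine obstacle is the density of $\cD_\omega(\R^N)$ in $\cD_{L^p_\mu,\omega}(\R^N)$ in part (3), and I would establish it by a cut-off argument: fix $\theta\in\cD_\omega(\R^N)$ with $\theta\equiv1$ on $B_1$ and $0\le\theta\le1$, set $\psi_j(x):=\theta(x/j)$ (still in $\cD_\omega(\R^N)$, with $\|\partial^\gamma\psi_j\|_\infty$ bounded uniformly in $j\ge1$ since differentiation produces factors $j^{-|\gamma|}\le1$), and show $\psi_j f\to f$. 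Expanding $\partial^\alpha\big((1-\psi_j)f\big)$ by Leibniz, every term is supported in $\{|x|\ge j\}$ and carries a derivative $\partial^{\alpha-\beta}f$ whose tail $\|\exp(\mu\omega)\partial^{\alpha-\beta}f\|_{L^p(|x|\ge j)}$ tends to $0$ as $j\to\infty$. The delicate point, which I expect to cost the most care, is to pass this limit through the full series over $\alpha$: one must dominate $t_{m,\mu,p}\big((1-\psi_j)f\big)$ by a fixed multiple of $t_{m',\mu,p}(f)$ with $m'$ slightly larger than $m$ (absorbing the uniformly bounded derivatives of $\psi_j$ via \eqref{secondprop}), producing a $j$-independent integrable majorant, and then apply dominated convergence on the product of counting and Lebesgue measure. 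The same cut-off proves density of $\cD_\omega(\R^N)$ in $\cD_{L^\infty_\mu,\omega}(\R^N)$, which explains why it is this decay subspace, rather than all of $\cB_{L^\infty_\mu,\omega}(\R^N)$, that is the natural codomain in part (3).
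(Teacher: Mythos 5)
Your proof is correct and follows essentially the same route as the paper: the two nontrivial points --- the embedding into $\cE_\omega(\R^N)$ via the weighted Morrey estimate of Proposition \ref{PropMorrey}, and the density of $\cD_\omega(\R^N)$ obtained by multiplying by dilated cut-offs $\theta(x/j)$ (the paper's $\phi(\epsilon x)$ with $\epsilon=1/j$) and expanding by Leibniz with \eqref{secondprop} and \eqref{firstprop} --- are exactly the paper's arguments. The only difference is bookkeeping: the paper passes to the equivalent sup-norms $q_{m,\mu,p}$ (Remark \ref{R.Normenegativo}) and then needs a uniform-in-$\alpha$ tail estimate, whereas you keep the series norms $t_{m,\mu,p}$ and justify the limit interchange by dominated convergence; both are valid.
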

\begin{proof}
	The proof of assertions (1) and (2) are straightforward. 
	
	(3) Fix $\mu\in\R$. We first  observe that  the inclusion $\cD_{\omega}(\mathbb{R}^N)\hookrightarrow \cD_{L^p_\mu,\omega}(\R^N)$ is  clearly continuous. Indeed,  it is the composition of the continuous inclusions $\cD_{\omega}(\mathbb{R}^N)\hookrightarrow \cS_\omega(\R^N)$ and $\cS_\omega(\R^N)\hookrightarrow \cD_{L^p_\mu,\omega}(\R^N)$ (see Remark \ref{R.Coincidenza}). 
	% Fix $f\in\cD_\omega(\R^N)$, $m\in N$ and $K$ a compact subset of $\R^N$ such that supp $f\subseteq K$. We have that for every $\alpha \in \N^N_0$ and $x\in\R^N$
	%\begin{equation*}
	%	\exp(\mu\omega(x))|\partial^\alpha f(x)|\leq p_{K,m}(f) \exp\left(\mu\omega(x)+m\varphi_\omega^*\left(\frac{|\alpha|}{m}\right)\right).
	%\end{equation*}
	% $\cD_\omega(\R^N)\subset\cO_\omega(\R^N)$, thanks to Proposition \ref{newdefomoc}, we get that for every $\alpha \in \N^N_0$ and $x\in\R^N$
	%\begin{equation*}
	%\exp(\mu\omega(x))|\partial^\alpha f(x)|\leq Cp_{K,m}(f) \exp\left(\mu\omega(x)+m\varphi_\omega^*\left(\frac{|\alpha|}{m}\right)-|\alpha|\right).
	%\end{equation*}
	%Then we have
	%\begin{equation*}
	%t^p_{m,\mu,p}(f)=\sum_{\alpha\in \N^N_0}\,\|\exp(\mu\omega)\partial^\alpha f\|_p^p\exp \left(-pm\varphi^*_\omega\left(\frac{|\alpha|}{m}\right)\right)\leq C^pD(p_{K,m}(f))^p,
	%\end{equation*}
	%where $D=\|\exp(\mu\omega)\|_{L^p(K)}\left(\sum_{\alpha\in\mathbb{N}^N_0}\exp(-p|\alpha|)\right)^{\frac{1}{p}}<\infty$. This shows the continuity of the inclusion. 
	%\\[0.1cm]
	We now assume  $p<\infty$ and  observe that the sequence $\{q_{m,\mu,p}\}_{m\in\N}$ of  norms also generates the lc-topolgy of $\cD_{L^p_\mu,\omega}(\R^N)$ in view of Remark \ref{R.Normenegativo}. 
	
	So, fix $f\in \mathcal{D}_{L^p_\mu,\omega}(\mathbb{R}^N)$
	and $\phi\in \mathcal{D}_{\omega}(\mathbb{R}^N)$ such that $\phi \equiv 1$ on $\overline{B}_1(0)$ and $0\leq \phi\leq 1$. Then for every $\epsilon >0$ the function $\phi_\epsilon (x):=\phi(\epsilon x)f(x) $, for $x\in\R^N$, belongs to  $ \mathcal{D}_{\omega}(\mathbb{R}^N)$ by property (3) above. Fix $m\in\mathbb{N}$ and let $M\in\mathbb{N}$ such that $M\geq Lm$, where $L\geq 1$ is the costant appearing in formula $(\ref{l})$. Then for every $\alpha\in\mathbb{N}^N_0$, $x\in\mathbb{R}^N$ and $\epsilon>0$ we have
	\begin{align*}\label{eq.densC}
	&||\exp(\mu\omega)\partial^\alpha(\phi_\epsilon -f)||_p\leq \left(\int_{\R^N}\exp(\mu p\omega(x))|\partial^\alpha f(x)(\phi(\epsilon x)-1)|^p\,dx\right)^{\frac{1}{p}}\\
	&\quad +\sum_{\beta<\alpha} \binom{\alpha}{\beta} \left(\int_{\R^N}\exp(\mu p\omega(x))|\partial^\beta f(x)|^p\epsilon^{(\alpha-\beta)p}|\partial^{\alpha-\beta} \phi (\epsilon x)|^p\, dx\right)^{\frac{1}{p}}\nonumber\\
	& \quad\leq \left(\int_{\R^N}\exp(\mu p\omega(x))|\partial^\alpha f(x)(\phi(\epsilon x)-1)|^p dx\right)^{\frac{1}{p}}\\&	\quad +\sum_{\beta<\alpha} \binom{\alpha}{\beta} \epsilon^{(\alpha-\beta)}q_{M,\mu,p}(f) \exp\left(M\varphi^*_\omega\left(\frac{|\beta|}{M}\right)\right) q_{M,0}(\phi)\exp\left(M\varphi^*_\omega\left(\frac{|\alpha-\beta|}{M}\right)\right)\nonumber\\
	&\quad \leq\left(\int_{\R^N}\exp(\mu p\omega(x))|\partial^\alpha f(x)(\phi(\epsilon x)-1)|^p dx\right)^{\frac{1}{p}}\\&\quad +\sum_{\beta<\alpha} \binom{\alpha}{\beta} \epsilon^{(\alpha-\beta)}q_{M,\mu,p}(f)q_{M,0}(\phi)\exp\left(M\varphi^*_\omega\left(\frac{|\alpha|}{M}\right)\right) \\
	&\quad \leq\left(\int_{\R^N}\exp(\mu p\omega(x))|\partial^\alpha f(x)(\phi(\epsilon x)-1)|^p dx\right)^{\frac{1}{p}}\\
	&\quad +q_{M,\mu,p}(f)q_{M,0}(\phi)\epsilon 2^{|\alpha|}\exp\left(M\varphi^*_\omega\left(\frac{|\alpha|}{M}\right)\right),
	\end{align*}
	after having observed  that  $\sum_{\beta<\alpha} \binom{\alpha}{\beta} \epsilon^{(\alpha-\beta)}\leq \epsilon 2^{|\alpha|}$ for every $\alpha\in\N_0^N$ and applied formula (\ref{secondprop}). Since $M\geq mL$, from $(\ref{firstprop})$ it follows  for every $\alpha\in\N_0^N$, $x\in\R^N$  and $\epsilon>0$ that 
	\begin{align*}
	\|\exp(\mu\omega)\partial^\alpha (\phi_\epsilon-f)\|_p
	& \leq\left(\int_{\R^N}\exp(\mu p\omega(x))|\partial^\alpha f(x)(\phi(\epsilon x)-1)|^p dx\right)^{\frac{1}{p}}\\
	& +q_{M,\mu,p}(f)q_{M,0}(\phi)\epsilon \exp(mL)\exp\left(m\varphi^*_\omega\left(\frac{|\alpha|}{m}\right)\right).
	\end{align*}
	Therefore, we have for every $\epsilon>0$ that 
	\begin{align*}
	&q_{m,\mu,p}(\phi_\epsilon-f)=\sup_{\alpha\in \N^N_0}\,\|\exp(\mu\omega)\partial^\alpha (\phi_\epsilon-f)\|_p\exp \left(-m\varphi^*_\omega\left(\frac{|\alpha|}{m}\right)\right)\\& \quad\leq \sup_{\alpha\in \N^N_0}\|\exp(\mu \omega)\partial^\alpha f(\phi(\epsilon \cdot)-1)\|_p\exp\left(-m\varphi^*_\omega\left(\frac{|\alpha|}{m}\right)\right) +\epsilon \exp(mL)q_{M,\mu,p}(f)q_{M,0}(\phi).
	\end{align*}
	It is clear that $\epsilon \exp(mL)q_{M,\mu,p}(f)q_{M,0}(\phi)\to0$, as $\epsilon \to 0^+$. So, it remains to prove that
	\begin{equation*}
	\sup_{\alpha\in \N^N_0}\|\exp(\mu \omega)\partial^\alpha f(\phi(\epsilon \cdot)-1)\|_p\exp\left(-m\varphi^*_\omega\left(\frac{|\alpha|}{m}\right)\right)\to 0
	\  \text{ as }\; \epsilon \to 0^+.
	\end{equation*}
	But,  $\phi(\epsilon x)-1=0$ whenever $|x|\leq \frac{1}{\epsilon}$. Accordingly,  we have
	\begin{align*}
	&\sup_{\alpha\in \N^N_0}\|\exp(\mu \omega)\partial^\alpha f(\phi(\epsilon \cdot)-1)\|_p\exp\left(-m\varphi^*_\omega\left(\frac{|\alpha|}{m}\right)\right)=
	\\&\quad=\sup_{\alpha\in \N^N_0}\,\exp\left(-m\varphi^*_\omega\left(\frac{|\alpha|}{m}\right)\right)\left(\int_{|x|> \frac{1}{\epsilon}}\exp(\mu p\omega(x))|\partial^\alpha f(x)(\phi(\epsilon x)-1)|^p dx\right)^{\frac{1}{p}}\\&\quad\leq \sup_{\alpha\in \N^N_0}\,\exp\left(-m\varphi^*_\omega\left(\frac{|\alpha|}{m}\right)\right)\left(\int_{|x|> \frac{1}{\epsilon}}\exp(\mu p\omega(x))|\partial^\alpha f(x)|^p\,dx\right)^{\frac{1}{p}}.
	\end{align*}
	%Applying Remark \ref{R.Normenegativo}, we get that
	%\begin{align*}
	%&\sum_{\alpha\in \N^N_0}\,\exp\left(-mp\varphi^*_\omega\left(\frac{|\alpha|}{m}\right)\right)\int_{|x|> \frac{1}{\epsilon}}\exp(\mu p\omega(x))|\partial^\alpha f(x)|^p dx\\&\leq \exp(Lm)\left(\sum_{\alpha\in\mathbb{N}^N_0}\exp(-p|\alpha|)\right)^{\frac{1}{p}} \underset{\alpha\in\N^N_0}{\sup} \exp\left(-Lm\varphi^*_\omega\left(\frac{|\alpha|}{Lm}\right)\right)\int_{|x|> \frac{1}{\epsilon}}\exp(\mu p\omega(x))|\partial^\alpha f(x)|^p dx\\&=D\underset{\alpha\in\N^N_0}{\sup} \exp\left(-Lm\varphi^*_\omega\left(\frac{|\alpha|}{Lm}\right)\right)\int_{|x|> \frac{1}{\epsilon}}\exp(\mu p\omega(x))|\partial^\alpha f(x)|^p \,dx .
	%\end{align*}
	Arguing in a similar way as in \cite[Lema 1.1.24]{GO-T}, we obtain that
	\begin{align*}
	\underset{\alpha\in\N^N_0}{\sup} \exp\left(-m\varphi^*_\omega\left(\frac{|\alpha|}{m}\right)\right)\left(\int_{|x|> \frac{1}{\epsilon}}\exp(\mu p\omega(x))|\partial^\alpha f(x)|^p \,dx\right)^{\frac{1}{p}} \to 0\, \textit{ as }\, \epsilon \to 0^+.
	\end{align*}
	By the arbitarity of $m\in\mathbb{N}$, we can conclude that $\phi_\epsilon \to f$ in $\cD_{L^p_\mu,\omega}(\R^N)$ as $\epsilon \to 0^+$.
	
	%Initially, we prove the continuity of the inclusion $\cD_{\omega}(\mathbb{R}^N)\hookrightarrow \cD_{L^\infty_\mu,\omega}(\R^N)$. Fix $f\in \cD_\omega(\R^N)$, $m\in N$ and $K$ a compact subset of $\R^N$ such that supp $f\subseteq K$. We have that for every $\alpha \in \N^N_0$ and $x\in\R^N$
	%\begin{equation*}
	%\exp(\mu\omega(x))|\partial^\alpha f(x)|\leq p_{K,m}(f) \exp\left(\mu\omega(x)+m\varphi_\omega^*\left(\frac{|\alpha|}{m}\right)\right).
	%\end{equation*}
	%Then we get
	%\begin{equation*}
	%t_{m,\mu,\infty}(f)=\underset{\alpha\in\N^N_0}{\sup}\,\|\exp(\mu\omega)\partial^\alpha f\|_\infty\exp \left(-m\varphi^*_\omega\left(\frac{|\alpha|}{m}\right)\right)\leq Cp_{K,m}(f),
	%\end{equation*}
	%where $C=\|\exp(\mu\omega)\|_{L^\infty(K)}$. This shows the continuity of the inclusion. 
	
	The proof of the density for $p=\infty$ is very similar to that of the case $p<\infty$. Indeed, fixed $f\in\cD_{L^\infty_\mu,\omega}(\R^N)$, we can show analogously that for the same $\phi_\epsilon$ and $M$ we have
	\begin{align*}
	&t_{m,\mu,\infty}(\phi_\epsilon-f)\leq \underset{\alpha\in\N^N_0}{\sup}\,\underset{|x|> \frac{1}{\epsilon}}{\sup}\,\exp\left(-m\varphi^*_\omega\left(\frac{|\alpha|}{m}\right)\right)\exp(\mu \omega(x))|\partial^\alpha f(x)|\\&\quad+\epsilon \exp(mL)t_{M,\mu,\infty}(f)q_{M,0}(\phi).
	\end{align*}
	The assumption $f\in\cD_{L^\infty_\mu,\omega}(\R^N)$ implies that 
	\begin{equation*}
	\underset{\alpha\in\N^N_0}{\sup}\,\underset{|x|> \frac{1}{\epsilon}}{\sup}\,\exp\left(-m\varphi^*_\omega\left(\frac{|\alpha|}{m}\right)\right)\exp(\mu \omega(x))|\partial^\alpha f(x)|\to 0 \,\textit{ as }\,\epsilon\to0^+
	\end{equation*}
	and so $t_{m,\mu,\infty}(\phi_\epsilon-f)\to 0$, as $\epsilon\to0^+$.
	
	(4) For a fixed $\mu\in\R$, let $n\in\N$ satisfying $-n\leq \mu$ and $n'\in\N$ with $n'\geq n$ as in Proposition \ref{PropInclp}. Then by point (2), Proposition \ref{incoc} (see   \eqref{eq.inLLp}) and Theorem \ref{T.incl} (cf. also with Remark \ref{R.Coincidenza}) it follows that the inclusions
	\[
	\cD_{L^p_{\mu},\omega}(\R^N)\hookrightarrow \cD_{L^p_{-n},\omega}(\R^N)=\bigcap_{m=1}^\infty\cO^m_{n,\omega,p}(\R^N)\hookrightarrow \bigcap_{m=1}^\infty\cO^m_{n',\omega}(\R^N)\hookrightarrow \cE_\omega(\R^N)
	\] 
	and the inclusions
	\[
	\cB_{L^\infty_{\mu },\omega}(\R^N)\hookrightarrow \cB_{L^\infty_{-n},\omega}(\R^N)=\bigcap_{m=1}^\infty \cO^{m}_{n,\omega}(\R^N)\hookrightarrow \cE_\omega(\R^N)
	\]
	are continuous.
	Finally the density of such inclusions follows from the fact that $\cD_\omega(\R^N)$ is continuously and densely included in   $\cE_\omega(\R^N)$.
\end{proof}

\begin{rem}
	The inclusion $\cD_{\omega}(\mathbb{R}^N)\hookrightarrow \cB_{L^\infty_\mu,\omega}(\R^N)$ is never dense. In particular, in point (3) of Proposition \ref{propdlpn}, we use the stronger condition $|\exp(\mu\omega(x))\partial^\alpha f(x)|\to 0$ as $|x|\to \infty$ for every $\alpha\in\N_0^N$ to get the claim.
\end{rem}

\begin{rem}\label{inlp} For every  $\mu\in \R$ and $1\leq p\leq \infty$ the space 
 $\cD_{L^p_{\mu},\omega}(\mathbb{R}^N)$ is clearly included in $ L^p(\R^N, \exp(\mu\omega(x))dx)$ and hence,  $\|\exp(\mu\omega)\partial^\alpha f\|_p<\infty$ for every $\alpha\in\N^N_0$ whenever  $f\in \cD_{L^p_{\mu},\omega}(\mathbb{R}^N)$.
\end{rem}

In the following, we always assume that the weight $\omega$ satisfies the additional condition $\log (1+t)=o(\omega(t))$ as $t\to\infty$, which is stronger than condition $(\gamma)$. 

Let  $G$ be an entire function satisfying the condition  $\log |G(z)| = O(\omega(z))$ as $|z|\to\infty$. Then the  functional $T_G$ defined on $\cE_\omega(\R^N)$  by
\begin{equation*}
\langle T_G,\phi\rangle:=\sum_{\alpha\in \N^N_0}(-i)^\alpha \frac{\partial^\alpha G(0)}{\alpha!}\partial^\alpha \phi(0), \quad \phi\in\cE_\omega(\R^N),
\end{equation*}
belongs to $\cE'_\omega(\R^N)$. The operator $G(D)$ defined on $\cD'_\omega(\R^N)$
through
\begin{equation*}
G(D): \cD'_\omega(\R^N)\to\cD'_\omega(\R^N), \quad S\mapsto G(D)S:= T_G\star S,
\end{equation*}
is called an ultradifferential operator of $\omega$-class. When $G(D)$ is restricted to $\mathcal{E}_\omega(\R^N)$, $G(D)$
is a continuous linear operator from $\mathcal{E}_\omega(\R^N)$ into itself and, for every $\phi\in\mathcal{E}_\omega(\R^N)$,
\begin{equation*}
(G(D)\phi)(x)=\sum_{\alpha\in\N^N_0}i^{|\alpha|} \frac{\partial^\alpha G(0)}{\alpha!}\partial^\alpha \phi(x), \quad \forall x\in\R^N.
\end{equation*}

Along the lines in \cite{GO} (see also \cite{BE,Braun}), the next aim  consists in  showing that each ultradifferential operator $G(D)$ of
$\omega$-class defines a continuous linear operator from $\cD_{L^p_\mu,\omega}(\mathbb{R}^N)$  into itself and  from $\cB_{L^\infty_\mu,\omega}(\R^N$) into itself, for every $1\leq p\leq \infty$ and $\mu\in\R$.

\begin{prop}\label{dlpngd}
	Let $\omega$ be a non-quasianalytic weight function  with the additional condition $\log (1+t)=o(\omega(t))$ as $t\to\infty$. Let $\mu\in\R$ and  $1\leq p\leq \infty$. If  $G(D)$ is an ultradifferential operator of $\omega$-class, then 
	\begin{equation*}
	G(D): \cD_{L^p_\mu,\omega}(\mathbb{R}^N) \to \cD_{L^p_\mu,\omega}(\mathbb{R}^N)
	\end{equation*}
	is a continuous linear operator. Moreover, $G(D)$ is also a continuous linear operator from $\cD'_{L^p_\mu,\omega}(\mathbb{R}^N)$ into itself. The result is also valid when $G(D)$ acts on  ${\cB}_{L^\infty_\mu,\omega}(\mathbb{R}^N)$ and on its strong dual, i.e., $G(D)\in \cL( \cB_{L^\infty_\mu,\omega}(\mathbb{R}^N))$ and $G(D)\in \cL( \cB'_{L^\infty_\mu,\omega}(\mathbb{R}^N))$. 
\end{prop}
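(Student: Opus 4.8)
The plan is to realize $G(D)$ as the series of constant-coefficient operators $G(D)=\sum_{\alpha\in\N_0^N}a_\alpha\partial^\alpha$ with $a_\alpha=i^{|\alpha|}\partial^\alpha G(0)/\alpha!$, and to first convert the hypothesis $\log|G(z)|=O(\omega(z))$ into a quantitative decay estimate for the $a_\alpha$. Fixing $k_*\ge 1$ with $|G(z)|\le C\exp(k_*\omega(|z|))$, Cauchy's estimate on the sphere $|z|=r\ge 1$ gives $|a_\alpha|\,r^{|\alpha|}\le C\exp(k_*\omega(r))$; writing $r=e^t$, recalling $\varphi_\omega(t)=\omega(e^t)$ and optimizing over $t\ge 0$ in $|a_\alpha|\le C\exp(k_*\varphi_\omega(t)-|\alpha|t)$, the definition \eqref{Yconj} of the Young conjugate yields the bound $|a_\alpha|\le C\exp\bigl(-k_*\varphi^*_\omega(|\alpha|/k_*)\bigr)$ for all $\alpha\in\N_0^N$. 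This is the only place where the analytic input about $G$ enters; the standing assumption $\log(1+t)=o(\omega(t))$ guarantees, as in the paragraph preceding the statement, that $T_G\in\cE'_\omega(\R^N)$ and that $G(D)$ is genuinely defined.

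Next I would prove continuity on the function spaces. Fix $\mu\in\R$, $1\le p\le\infty$, and a target index $m\in\N$. From $\partial^\beta(G(D)f)=\sum_\alpha a_\alpha\partial^{\alpha+\beta}f$ and Minkowski's inequality I bound $\|\exp(\mu\omega)\partial^\beta(G(D)f)\|_p\le\sum_\alpha|a_\alpha|\,\|\exp(\mu\omega)\partial^{\alpha+\beta}f\|_p$, and then use $\|\exp(\mu\omega)\partial^{\alpha+\beta}f\|_p\le t_{M,\mu,p}(f)\exp\bigl(M\varphi^*_\omega(|\alpha+\beta|/M)\bigr)$ for a parameter $M$ to be fixed. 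The combinatorial heart is to split $M\varphi^*_\omega(|\alpha+\beta|/M)$: since $|\alpha+\beta|\le|\alpha|+|\beta|$ and $\varphi^*_\omega$ is increasing, inequality \eqref{secondprop} with $\lambda=M/2$ gives $M\varphi^*_\omega(|\alpha+\beta|/M)\le \tfrac M2\varphi^*_\omega(2|\alpha|/M)+\tfrac M2\varphi^*_\omega(2|\beta|/M)$. Taking $M\ge 2m$ and using that $\lambda\mapsto\lambda\varphi^*_\omega(\cdot/\lambda)$ is decreasing (Lemma \ref{L.Pfistar}(1)) makes $\tfrac M2\varphi^*_\omega(2|\beta|/M)\le m\varphi^*_\omega(|\beta|/m)$, which cancels the weight of the target seminorm. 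What remains is $\sum_\alpha|a_\alpha|\exp\bigl(\tfrac M2\varphi^*_\omega(|\alpha|/(M/2))\bigr)$; choosing in addition $M\ge 2k_*L$ and applying \eqref{firstprop} of Lemma \ref{L.Pfistar}(4) bounds this by $C\sum_\alpha 2^{-|\alpha|}<\infty$. Hence $t_{m,\mu,p}(G(D)f)\le C\,t_{M,\mu,p}(f)$, giving $G(D)\in\cL(\cD_{L^p_\mu,\omega}(\R^N))$ and $G(D)\in\cL(\cB_{L^\infty_\mu,\omega}(\R^N))$. For $p=\infty$ one also checks that $G(D)$ preserves the subspace $\cD_{L^\infty_\mu,\omega}(\R^N)$: the series defining $\exp(\mu\omega)\partial^\gamma(G(D)f)$ converges uniformly in $x$, and each summand tends to $0$ at infinity, so the sum does too.

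For the dual operators I would argue by transposition. The formal transpose of $G(D)=\sum a_\alpha\partial^\alpha$ is $\widetilde G(D):=\sum(-1)^{|\alpha|}a_\alpha\partial^\alpha$, the $\omega$-class operator with symbol $\check G(z)=G(-z)$; since $\omega(|{-z}|)=\omega(|z|)$, $\check G$ again satisfies $\log|\check G(z)|=O(\omega(z))$, so by the first part $\widetilde G(D)$ maps the Fr\'echet space $\cD_{L^p_\mu,\omega}(\R^N)$ (resp.\ $\cB_{L^\infty_\mu,\omega}(\R^N)$) continuously into itself. Using the convolution definition $G(D)S=T_G\star S$ together with \eqref{trf} I verify the pairing identity $\langle G(D)S,\phi\rangle=\langle S,\widetilde G(D)\phi\rangle$, so that $G(D)={}^{t}\!\bigl(\widetilde G(D)\bigr)$ on the strong dual. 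Since the transpose of a continuous endomorphism of a Fr\'echet space is continuous for the strong dual topologies, I conclude $G(D)\in\cL(\cD'_{L^p_\mu,\omega}(\R^N))$ and $G(D)\in\cL(\cB'_{L^\infty_\mu,\omega}(\R^N))$.

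I expect the main obstacle to be arranging the quantifiers in the combinatorial estimate: the decay rate $k_*$ of the coefficients is fixed by $G$, whereas the target index $m$ is arbitrary and possibly small, so the naive choice $M=2m$ need not produce a summable series. The resolution is to keep $M$ free and take it large enough (at least $\max\{2m,\,2k_*L\}$), exploiting both the monotonicity of $\lambda\mapsto\lambda\varphi^*_\omega(\cdot/\lambda)$ and the geometric gain $2^{|\alpha|}$ in \eqref{firstprop}. A secondary, more routine difficulty is the compatibility check that the distributional $G(D)$ coincides with ${}^{t}\!\bigl(\widetilde G(D)\bigr)$, and the bookkeeping needed to run the $\ell^p$ rather than the sup estimate for $1\le p<\infty$, which I would handle by Minkowski's inequality applied to the sum over $\alpha$.
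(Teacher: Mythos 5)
Your proposal is correct and follows essentially the same route as the paper's proof: the coefficient decay $|a_\alpha|\le C\exp\bigl(-k\varphi^*_\omega(|\alpha|/k)\bigr)$ (which the paper imports from \cite{GO-T} rather than rederiving via Cauchy estimates as you do), termwise differentiation estimated through the convexity properties of $\varphi^*_\omega$ in Lemma \ref{L.Pfistar}, and continuity on the duals by identifying $G(D)$ with the transpose of the reflected operator $G(-D)$. The only organizational difference is in the resummation: the paper uses \eqref{eq.bb} to gain geometric factors $e^{-|\alpha|}$ and $e^{-|\beta|}$ and sums in both indices directly, whereas you cancel the $\beta$-weight by the monotonicity of $\lambda\mapsto\lambda\varphi^*_\omega(\cdot/\lambda)$, gain summability in $\alpha$ from \eqref{firstprop}, and handle the $\ell^p$ sum over $\beta$ (for $p<\infty$) by Minkowski's inequality --- a fix you correctly flag and which does close the argument.
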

\begin{proof}
	We treat only the case $p<\infty$. The proof of the other one is analogous and so it is omitted.
	
Since $G$ is entire function satisfying the condition $\log |G(z)| = O(\omega(|z|))$ as $|z|\to\infty$, by \cite[Lema 1.2.1]{GO-T} (or see the proof of \cite[Proposition 2.4]{GO} ), 
% and the property $(\alpha)$ in Definition \ref{D.weight}, 
 there exists $k>0$ such that for every $\alpha\in\N_0^N$ we have
	\begin{equation*}
	|\partial^\alpha G(0)|\leq \alpha! \exp\left(k-k\varphi_\omega^*\left(\frac{|\alpha|}{k}\right)\right).
	\end{equation*}
Let	$f \in \cD_{L^p_\mu,\omega}(\mathbb{R}^N)$  and $h\in\N$. 
%By assumption $t^p_{m,n}(f)<\infty$ for each $m\in\N$. 
If  we choose 	
	 $m\in\mathbb{N}$ such that $m\geq\max\{kL,hL\}$, where $L\geq 1$ is the constant appearing in  (\ref{l}), it follows via (\ref{secondprop}) for every $\alpha,\beta\in \N^N_0$ that 
	\begin{align}\label{eq1}
	&\frac{|\partial^\alpha G(0)|^p}{(\alpha!)^p} \int_{\mathbb{R}^N}|\partial^{\alpha+\beta} f(x)|^p\exp(p\mu\omega(x))\, dx\leq \nonumber\\
	&\quad\leq t_{2m,\mu,p}^p(f) \exp \left(pm\varphi^*_\omega\left(\frac{|\alpha|}{m}\right)+pm\varphi^*_\omega\left(\frac{|\beta|}{m}\right)\right)\exp\left(pk-pk\varphi^*_\omega\left(\frac{|\alpha|}{k}\right)\right).
	\end{align}
	%This holds for every $r>0$, so taking the infimum and observing that
	%\begin{equation*}
	%\underset{r>0}{\inf}\exp(k\varphi(r)-r|\alpha|)=\exp \left(-k\varphi^*\left(\frac{|\alpha|}{k}\right)\right), 
	%\end{equation*}
	Since $\frac{\varphi^*_\omega(t)}{t}$ is an increasing function in $(0,\infty)$ and $\varphi^*_\omega$ satisfies inequality \eqref{eq.bb}, we get from (\ref{eq1}) that for every $\alpha,\beta\in \N^N_0$ 
	\begin{align*}
	&\frac{|\partial^\alpha G(0)|^p}{(\alpha!)^p}\exp \left(-ph\varphi^*_\omega\left(\frac{|\beta|}{h}\right)\right) \int_{\mathbb{R}^N}|\partial^{\alpha+\beta} f(x)|^p\exp(p\mu\omega(x))\, dx\nonumber\leq \\
	&\quad\leq t^p_{2m,\mu,p}(f)\exp(pk) \exp \left(pm\varphi^*_\omega\left(\frac{|\alpha|}{m}\right)-pk\varphi^*_\omega\left(\frac{|\alpha|}{k}\right)\right)\exp(phL-p|\beta|)\nonumber\\
	&\quad\leq t_{2m,\mu,p}^p(f)\exp(pk)\exp(pkL-p|\alpha|)\exp(phL-p|\beta|).
	\end{align*}
	Accordingly, we obtain for every $\beta\in\N_0^N$ that
	\begin{align*}
	& \exp \left(-ph\varphi^*_\omega\left(\frac{|\beta|}{h}\right)\right)||\exp(\mu \omega)\partial^\beta (G(D)f)||_p^p\leq\\
	&\quad\leq \left(\sum_{\alpha\in\N_0^N}\frac{|\partial^\alpha G(0)|}{\alpha!}\exp\left(-h\varphi^*_\omega\left(\frac{|\beta|}{h}\right)\right)||\exp(\mu\omega)\partial^{\alpha+\beta}f||_p\right)^p\\
	&\quad\leq t_{2m,\mu,p}^p(f)\exp(p(k+kL+hL))\exp(-p|\beta|) \left(\sum_{\alpha\in\N_0^N}\exp(-|\alpha|)\right)^p.
	\end{align*}
	This implies that
	\begin{align*}
	 t_{h,\mu,p}(G(D)f)&=\left(\sum_{\beta\in\N_0^N}||\exp(\mu \omega)\partial^\beta (G(D)f)||_p^p\exp \left(-ph\varphi^*_\omega\left(\frac{|\beta|}{h}\right)\right)\right)^{\frac{1}{p}} \\
&\leq Ct_{2m,\mu,p}(f)\left(\sum_{\beta\in\N^N_0}\exp(-p|\beta|) \right)^{\frac{1}{p}},
	\end{align*}
	where $\left(\sum_{\beta\in\N^N_0}\exp(-p|\beta|) \right)^{\frac{1}{p}}<\infty$ and $C:=\exp(k+kL+hL)\sum_{\alpha\in\N_0^N}\exp(-|\alpha|)<\infty$.  This shows that the map $G(D)$ is well posed and continuous.
	
	Since $G(-D):=\hat{G}(D)$ is also an ultradifferentiable  operator of $\omega$-class and hence $G(-D):  \cD_{L^p_\mu, \omega}(\R^N)\to \cD_{L^p_\mu, \omega}(\R^N)$ is a continuous linear operator as proved above, the dual operator $G(-D)': \cD'_{L^p_\mu, \omega}(\R^N)\to \cD'_{L^p_\mu, \omega}(\R^N)$ is a continuous linear operator too. Therefore, it follows for every $S\in \cD'_{L^p_\mu, \omega}(\R^N)$ and $f\in \cD_{L^p_\mu, \omega}(\R^N)$ that
	\[
	\langle G(-D)'S,f \rangle= \langle S,\check{T}_G \star f \rangle=\langle T_G\star S,  f \rangle=\langle G(D) S,  f \rangle.
	\]
	This implies that $G(D)$ is a continuous linear operator from $\cD'_{L^p_\mu,\omega}(\R^N)$ into itself.
\end{proof}

From  Proposition \ref{dlpngd} and Remark \ref{R.Coincidenza}, it easily follows the next result.

\begin{prop}\label{ocgd}
	Let $\omega$ be a non-quasianalytic weight function with the additional condition $\log(1+t)=o(\omega(t))$ as $t\to\infty$ and $G(D)$ be an ultradifferential operator of	$\omega$-class. Then the following properties are satisfied.
	\begin{enumerate}
		\item  $
		G(D): \cO_{C,\omega}(\mathbb{R}^N) \to \cO_{C,\omega}(\mathbb{R}^N)
	$
		is a continuous linear operator. Moreover, $G(D)$ also acts continuously on $\cO'_{C,\omega}(\mathbb{R}^N)$. 
		
		\item  $
		G(D): \cS_{\omega}(\mathbb{R}^N) \to \cS_\omega(\mathbb{R}^N)
		$
		is a continuous linear operator. Moreover, $G(D)$ also acts  continuously  on $\cS'_{\omega}(\mathbb{R}^N)$. 
	\end{enumerate}	
\end{prop}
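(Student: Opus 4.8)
The plan is to deduce everything from Proposition \ref{dlpngd} together with the representations of $\cO_{C,\omega}(\R^N)$ and $\cS_\omega(\R^N)$ recorded in Remark \ref{R.Coincidenza}; no new estimates are needed. The one preliminary observation I would make first is that $G(-D)$ is again an ultradifferential operator of $\omega$-class: the function $\check G(z):=G(-z)$ is entire and satisfies the same growth bound $\log|\check G(z)|=O(\omega(|z|))$, so $G(-D)$ inherits all the mapping properties proved for $G(D)$. This is what makes the dual statements accessible by transposition.

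For part (1), I would fix $1\le p<\infty$ and invoke \eqref{eq.OCD}, which presents $\cO_{C,\omega}(\R^N)=\bigcup_{n=1}^{\infty}\cD_{L^p_{-n},\omega}(\R^N)=\ind_{n}\cD_{L^p_{-n},\omega}(\R^N)$ as an (LF)-space. Proposition \ref{dlpngd}, applied with $\mu=-n$, says that $G(D)$ maps each step $\cD_{L^p_{-n},\omega}(\R^N)$ continuously into itself; since all these maps are restrictions of the single operator $G(D)$ defined on $\cD'_\omega(\R^N)$, they are compatible, and each composite $\cD_{L^p_{-n},\omega}(\R^N)\xrightarrow{G(D)}\cD_{L^p_{-n},\omega}(\R^N)\hookrightarrow\cO_{C,\omega}(\R^N)$ is continuous. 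The universal property of the inductive-limit topology then yields a continuous operator $G(D)\colon\cO_{C,\omega}(\R^N)\to\cO_{C,\omega}(\R^N)$. For the dual, I would use the adjoint identity established in the proof of Proposition \ref{dlpngd}, namely $\langle G(D)S,\phi\rangle=\langle S,G(-D)\phi\rangle$ for $S\in\cD'_\omega(\R^N)$ and $\phi\in\cO_{C,\omega}(\R^N)$ (coming from $G(D)S=T_G\star S$ and $\check T_G=T_{\check G}$). This exhibits $G(D)$ on $\cO'_{C,\omega}(\R^N)$ as precisely the transpose of the continuous operator $G(-D)$ on $\cO_{C,\omega}(\R^N)$; since the transpose of a continuous linear map is continuous for the strong dual topologies (because it sends a bounded set's seminorm $p_B$ to $p_{G(-D)(B)}$ with $G(-D)(B)$ bounded), we get $G(D)\in\cL(\cO'_{C,\omega}(\R^N))$.

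Part (2) is entirely parallel, with an inductive limit replaced by a projective one. I would invoke \eqref{eq.SD}, i.e. $\cS_\omega(\R^N)=\bigcap_{\mu>0}\cD_{L^p_\mu,\omega}(\R^N)=\proj_{m}\cD_{L^p_{m},\omega}(\R^N)$, a countable reduced projective limit of Fréchet spaces (the indices $\mu=m\in\N$ being cofinal by Proposition \ref{propdlpn}(2)). Since $G(D)$ acts continuously on each $\cD_{L^p_{m},\omega}(\R^N)$ by Proposition \ref{dlpngd} with $\mu=m>0$, any $f\in\cS_\omega(\R^N)$ is sent into $\bigcap_{m}\cD_{L^p_{m},\omega}(\R^N)=\cS_\omega(\R^N)$, and the projective-limit topology immediately gives continuity of $G(D)\colon\cS_\omega(\R^N)\to\cS_\omega(\R^N)$. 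The dual statement follows once more by transposition: $G(-D)$ is $\omega$-class and hence continuous on $\cS_\omega(\R^N)$, so its transpose $G(D)$ is continuous on $\cS'_\omega(\R^N)$.

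The only genuinely delicate points are bookkeeping: checking that the restrictions of $G(D)$ to the various step spaces really glue to the single operator on $\cD'_\omega(\R^N)$, so that the limit operator is literally $G(D)$ rather than some abstract continuous extension, and correctly identifying the transpose of $G(-D)$ with $G(D)$ on the distribution side through the convolution--duality identity. Neither is a real obstacle, so the proof is short; all the analytic content has already been absorbed into Proposition \ref{dlpngd}.
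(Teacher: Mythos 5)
Your proposal is correct and follows essentially the same route as the paper: part (1) is obtained from Proposition \ref{dlpngd} applied with $\mu=-n$ together with the identification $\cO_{C,\omega}(\R^N)=\bigcup_{n}\cD_{L^p_{-n},\omega}(\R^N)$ of Remark \ref{R.Coincidenza}, the dual statements come from the same transposition argument via $G(-D)$ used at the end of the proof of Proposition \ref{dlpngd}, and part (2) is the parallel projective-limit argument. The only difference is one of exposition: you spell out the universal properties and the identification of the transpose explicitly, whereas the paper compresses these steps into references to earlier proofs.
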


\begin{proof} (1) Fix $p\in [1,\infty)$. Then by Proposition \ref{dlpngd} we have for every $n\in\N$ that $G(D)$ is a continuous linear operator from $\cD_{L^p_{-n},\omega}(\mathbb{R}^N)$ into itself. Since 
 $\cO_{C,\omega}(\mathbb{R}^N)=\bigcup_{n=1}^\infty \cD_{L^p_{-n},\omega}(\mathbb{R}^N)$ algebraically and topologically (cf. Remark \ref{R.Coincidenza}), it clearly follows the thesis. 
 
To show  that $G(D)\in \cL(\cO'_{C,\omega}(\R^N))$, it suffices to argue as in the final part of the proof of Proposition \ref{dlpngd}.  

The proof of (2) follows with similar arguments.\end{proof}

%We denote $(\cD_{L^p_n,\omega}(\mathbb{R}^N))'$ the strong dual of $\cD_{L^p_n,\omega}(\mathbb{R}^N)$. On the same way, $(\overline{\cD}_{L^\infty_n,\omega}(\mathbb{R}^N))'$ denotes the strong dual of $\overline{\cD}_{L^p_n,\omega}(\mathbb{R}^N)$. By Proposition \ref{propdlpn} (e), we have the inclusions $(\overline{\cD}_{L^\infty_n,\omega}(\mathbb{R}^N))'\subset (\cD_{L^p_n,\omega}(\mathbb{R}^N))'\subset \cD'_\omega(\R^N)$.
%\begin{rem}
%	Thanks to Proposition \ref{dlpngd}, $G(D)$ is also a continuous linear operator from $(\cD_{L^p_n,\omega}(\mathbb{R}^N))'$ into itself. Analogously for $(\overline{\cD}_{L^\infty_n,\omega}(\mathbb{R}^N))'$. So, by Proposition \ref{ocgd}, $G(D)$ is a continuous linear operator from $\cO'_{C,\omega}(\mathbb{R}^N)$ into itself too.
%\end{rem}

From now on in this section, we consider only the case $p\in [1,\infty)$. In particular, we give   representations of the elements of  $\cD'_{L^p_\mu,\omega}(\R^N)$ similar to the
ones in \cite{BE,GO}. In order to do this, we begin with the following result.

\begin{prop}\label{primaimp}
	Let $\omega$ be a non-quasianalytic weight function. 
	%The the following properties are satisfied.
	%\begin{enumerate}
	%	\item 
	Let $\mu\in\R$ and $1\leq p< \infty$. Then there exists $\mu'\in\R$ with $\mu'\geq \mu$ such that  for every  $T\in \cD'_{L^p_\mu,\omega}(\mathbb{R}^N)$  and $\phi\in\cD_\omega(\R^N)$ the function $T\star\phi \in L^{p'}(\R^N,\exp(-\mu'\omega(x))dx)$,  $p'$ being the conjugate exponent of $p$. 
		%\item   ? Let $\mu\geq 0$.  Then there exists $\mu'\in\R$ with $\mu'\geq \mu$ such that  for every  $T\in \cD'_{L^\infty_\mu,\omega}(\mathbb{R}^N)$  and $\phi\in\cD_\omega(\R^N)$ the function $T\star\phi \in L^{1}(\R^N,\exp(-\mu'\omega(x))dx)$.
	%\end{enumerate}
\end{prop}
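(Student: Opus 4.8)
The plan is to read off a continuity estimate for $T$ as a functional on the Fr\'echet space $\cD_{L^p_\mu,\omega}(\R^N)$, apply it to the translated test function $y\mapsto\phi(x-y)$, and control the resulting growth in $x$ by an explicit exponential of $\omega(x)$; the weighted $L^{p'}$-integrability then follows from \eqref{eq.Lpspazi}. First, since by Proposition \ref{propdlpn}(1) the space $\cD_{L^p_\mu,\omega}(\R^N)$ is Fr\'echet with fundamental system of norms $\{t_{m,\mu,p}\}_{m\in\N}$, the continuity of $T\in\cD'_{L^p_\mu,\omega}(\R^N)$ gives $m\in\N$ and $C>0$ with $|\la T,\psi\ra|\le C\,t_{m,\mu,p}(\psi)$ for all $\psi\in\cD_{L^p_\mu,\omega}(\R^N)$. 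For fixed $x\in\R^N$ the function $\psi_x:=\tau_x\check\phi$, i.e. $\psi_x(y)=\phi(x-y)$, lies in $\cD_\omega(\R^N)\su\cD_{L^p_\mu,\omega}(\R^N)$ by Proposition \ref{propdlpn}(3); hence $(T\star\phi)(x)=\la T_y,\psi_x\ra$ is a well-defined number and $|(T\star\phi)(x)|\le C\,t_{m,\mu,p}(\psi_x)$.

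Next I estimate $t_{m,\mu,p}(\psi_x)$. Fix $R>0$ with ${\rm supp}\,\phi\su B_R(0)$. Since $\partial^\alpha\psi_x(y)=(-1)^{|\alpha|}(\partial^\alpha\phi)(x-y)$, the change of variables $z=x-y$ yields, for each $\alpha$,
\[
\|\exp(\mu\omega)\partial^\alpha\psi_x\|_p^p=\int_{B_R(0)}\exp\big(p\mu\omega(x-z)\big)\,|(\partial^\alpha\phi)(z)|^p\,dz .
\]
For $|z|\le R$ one has $|x-z|\le|x|+R$, so by monotonicity of $\omega$ and by \eqref{sub},
\[
\mu\omega(x-z)\le|\mu|\,\omega(|x|+R)\le|\mu|K\big(1+\omega(R)+\omega(x)\big).
\]
Writing $C_R:=\exp\big(p|\mu|K(1+\omega(R))\big)$ (independent of $\alpha$ and $x$), this gives $\|\exp(\mu\omega)\partial^\alpha\psi_x\|_p^p\le C_R\exp\big(p|\mu|K\omega(x)\big)\|\partial^\alpha\phi\|_p^p$. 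Multiplying by $\exp(-pm\varphi^*_\omega(|\alpha|/m))$ and summing over $\alpha$, and recalling that $t_{m,0,p}(\phi)<\infty$ because $\phi\in\cD_\omega(\R^N)\su\cD_{L^p_0,\omega}(\R^N)$ (Proposition \ref{propdlpn}(3) with $\mu=0$), I obtain $t_{m,\mu,p}(\psi_x)\le C_R^{1/p}\,t_{m,0,p}(\phi)\,\exp(|\mu|K\omega(x))$. Consequently
\[
|(T\star\phi)(x)|\le C''\exp(|\mu|K\omega(x)),\qquad C'':=C\,C_R^{1/p}\,t_{m,0,p}(\phi).
\]

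Finally, put $\mu':=p'|\mu|K+\tfrac{N+1}{b}$, where $b$ is the constant in condition $(\gamma)$. Since $p'K\ge1$ we have $\mu'\ge|\mu|\ge\mu$, so $\mu'\ge\mu$ as required. Then
\[
\int_{\R^N}|(T\star\phi)(x)|^{p'}\exp(-\mu'\omega(x))\,dx\le (C'')^{p'}\int_{\R^N}\exp\big(-(\mu'-p'|\mu|K)\omega(x)\big)\,dx,
\]
and the last integral is finite by \eqref{eq.Lpspazi} (applied with exponent $1$), since $\mu'-p'|\mu|K=\tfrac{N+1}{b}$. This shows $T\star\phi\in L^{p'}(\R^N,\exp(-\mu'\omega(x))\,dx)$. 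The point I expect to require the most care is precisely the uniformity of the choice of $\mu'$: the order $m$ and the constant $C$ coming from $T$, as well as the support radius $R$ of $\phi$, affect only the constant $C''$, whereas the exponential growth rate $|\mu|K$ of the pointwise bound depends solely on $\mu$ and the weight constant $K$. This is what allows a single $\mu'$ (depending only on $\mu,K,p,N,b$) to work for \emph{all} $T\in\cD'_{L^p_\mu,\omega}(\R^N)$ and $\phi\in\cD_\omega(\R^N)$ simultaneously.
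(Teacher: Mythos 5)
Your proof is correct, but it follows a genuinely different route from the paper's. You work \emph{pointwise}: you apply the continuity estimate for $T$ to the translated test function $y\mapsto\phi(x-y)$, control $t_{m,\mu,p}(\phi(x-\cdot))$ through the compact support of $\phi$ and \eqref{sub}, obtain the growth bound $|(T\star\phi)(x)|\le C''\exp(|\mu|K\omega(x))$, and then convert this into weighted $L^{p'}$-membership by sacrificing an extra factor $\exp(-\tfrac{N+1}{b}\omega)$, which is integrable by \eqref{eq.Lpspazi}. The paper instead argues \emph{by duality}: it pairs $T\star\phi$ against $\varphi\in\cD_\omega(\R^N)$, uses $\langle T\star\phi,\varphi\rangle=\langle T,\check{\phi}\star\varphi\rangle$, estimates $t_{m,\mu,p}(\check{\phi}\star\varphi)\le C'\|\exp(\mu K\omega)\varphi\|_p\, t_{m,K\mu,p}(\phi)$ via \eqref{sub}, and concludes from $L^p$--$L^{p'}$ duality (together with density of $\cD_\omega$ in the weighted $L^p$ space) that $T\star\phi$ lies in the dual weighted space. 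Each approach buys something: the paper's yields the sharper weight $\mu'=K\mu$ (for $\mu\ge 0$; $\mu/K$ for $\mu<0$), with no additive $\tfrac{N+1}{b}$ loss and no factor $p'$, plus an explicit norm bound $\|\exp(-K\mu\omega)(T\star\phi)\|_{p'}\le C'\,t_{m,K\mu,p}(\phi)$ linear in a fixed seminorm of $\phi$; yours is more elementary (no duality or density argument needed) and makes completely transparent the uniformity that the statement actually demands, namely that $\mu'$ depends only on $\mu,K,p,N,b$ and not on $T$ or $\phi$ --- both versions suffice for the later applications (Proposition \ref{secondaimp}, Theorem \ref{cardlpdual}), which only use membership. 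Two cosmetic remarks: in this paper's convention (cf.\ Remark \ref{inlp}) membership in $L^{p'}(\R^N,\exp(-\mu'\omega(x))dx)$ means $\|\exp(-\mu'\omega)(T\star\phi)\|_{p'}<\infty$, i.e.\ the weight enters the integral raised to the power $p'$, so your last display should carry $\exp(-p'\mu'\omega(x))$; your pointwise bound gives this too with the very same $\mu'$, so nothing breaks. Also, you should say a word on why $x\mapsto (T\star\phi)(x)$ is measurable (it is in fact smooth, indeed in $\cE_\omega(\R^N)$ by \cite[Proposition 6.4]{BMT}, as the paper notes at the start of its proof) before integrating it.
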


\begin{proof}	
	Since $\cD'_{L^p_\mu,\omega}(\mathbb{R}^N)\subset \cD'_\omega(\R^N)$ by Proposition \ref{propdlpn}(2), the convolution is well posed and $T\star\phi\in \cE_\omega(\mathbb{R}^N)$ (see \cite[Proposition 6.4]{BMT}).
	By assumption there exist $m\in\N$ and  $C>0$ such that for every  $\varphi \in \cD_{L^p_\mu,\omega}(\mathbb{R}^N)$
	\begin{equation*}
	|\langle T,\varphi \rangle| \leq C t_{m,\mu,p}(\varphi).  
	\end{equation*}
	Therefore, we get  for every $\varphi \in \cD_{\omega}(\mathbb{R}^N)$ that 
	\begin{align*}
	\left|\int_{\mathbb{R}^N} (T\star \phi)(x)\varphi(x)\, dx\right|&=|\langle T\star\phi,\varphi\rangle|=|\langle T,\check{\phi}\star\varphi\rangle|\leq C t_{m,\mu,p}(\check{\phi}\star\varphi)\\
	&=C\left(\sum_{\alpha\in\N^N_0}\,\|\exp(\mu\omega)\partial^\alpha (\check{\phi}\star\varphi)\|_p^p\exp \left(-pm\varphi^*_\omega\left(\frac{|\alpha|}{m}\right)\right)\right)^{\frac{1}{p}}.
	\end{align*}
We now suppose that $\mu\geq 0$ and observe that by Remark \ref{inlp} and property $(\alpha)$ of $\omega$, we have for every $\varphi\in\cD_\omega(\R^N)$ that 
	\begin{align*}
	\|\exp(\mu\omega)\partial^\alpha (\check{\phi}\star\varphi)\|_p^p&=\int_{\mathbb{R}^N}\exp(p\mu\omega(x))|\partial^\alpha(\check{\phi}\star\varphi)(x)|^p\, dx\\
	&=\int_{\mathbb{R}^N}\exp(p\mu\omega(x))|(\partial^\alpha\check{\phi}\star\varphi)(x)|^p\, dx\\
	&=\int_{\mathbb{R}^N}\int_{\mathbb{R}^N}\exp(p\mu\omega(x-y+y))|\partial^\alpha \phi(x-y)|^p|\varphi(y)|^p\, dxdy\\
	&\leq \exp(Kp\mu)\|\exp(\mu K\omega)\varphi\|_p^p \|\exp(\mu K\omega)\partial^\alpha \phi\|_p^p,
	\end{align*}
	where $K$ is the constant appearing in condition $(\alpha)$. Thereby, we obtain for every $\varphi\in\cD_\omega(\R^N)$ that 
	\begin{align*}
	\left|\int_{\mathbb{R}^N} (T\star \phi)(x)\varphi(x)\, dx\right|&\leq C\exp(\mu K)\|\exp(\mu K\omega)\varphi\|_p t_{m,K\mu,p}(\phi)\\
	&=C'\|\exp(\mu K\omega)\varphi\|_p t_{m,K\mu,p}(\phi).
	\end{align*}
	This implies that $T\star\phi \in L^{p'}(\R^N,\exp(-\mu K\omega(x))dx)$ with $||\exp(-\mu K\omega)  T\star \phi||_{p'}\leq C' t_{m,K\mu,p}(\phi)$. In the case that $\mu<0$, a similar argument shows that 
$T\star\phi \in L^{p'}(\R^N,\exp(-\frac{\mu}{ K}\omega(x))dx)$ with $||\exp(-\frac{\mu} {K}\omega)  T\star \phi||_{p'}\leq C' t_{m,-\mu,p}(\phi)$.
 \end{proof}

\begin{rem}\label{R.SUB} In case the weight function $\omega$ is sub-additive, i.e., $\omega(s+t)\leq \omega(s)+\omega(t)$ for all $s,t\geq 0$, we can deduce from the proof above that for every $T\in\cD_{L^p_\mu, \omega }(\R^N)$ and $\phi\in \cD_\omega(\R^N)$ the function $T\star \phi\in L^{p'}(\R^N,\exp(-\mu\omega(x))dx)$, with $p'$ the conjugate exponent of $p$. 
	
\end{rem}

%Towards our propose, we recall the following lemma given in  \cite[Corollary 2.6]{GO}.

%\begin{lem}\label{gd}
%	Let $\omega$ be a non-quasianalytic weight function. For every $m\in\N$ and $\epsilon>0$, there exist  an ultradifferential operator $G(D)$ of $\omega$-class and two functions $\chi\in\cD_\omega(\R^N)$ and $\Gamma\in \cD(B_\epsilon(0))\cap\mathcal{E}_{\omega,m}(B_\epsilon(0))$ such that $G(D)\Gamma+\chi=\delta$.
%\end{lem}

We recall that an ultradifferential operator $G(D)$ of $\omega$-class is said to be \textit{strongly elliptic} if there exist $M>0$ and $l>0$ such that $|G(z)|\geq M\exp(l\omega(z))$, for every $z\in \C^N$ with $|\Im z|<M|\Re z|$.

\begin{prop}\label{secondaimp}
	Let $\omega$ be a non-quasianalytic weight function with the additional condition $\log(1+t)=o(\omega(t))$ as $t\to \infty$. Let $T\in \cD'_\omega(\mathbb{R}^N)$, $1\leq p< \infty$ and $\mu\in\R$. Suppose that $T\star\phi \in L^{p}(\R^N,\exp(\mu\omega(x))dx)$ for every $\phi\in\cD_\omega(\R^N)$. Then, there exist a strongly elliptic ultradifferential operator $G(D)$ of $\omega$-class and $f,g\in L^{p}(\R^N,\exp(\mu\omega(x))dx)$ such that $T=G(D)f+g$.
\end{prop}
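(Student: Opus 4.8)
The plan is to realize $T$ through a parametrix of a conveniently chosen strongly elliptic ultradifferential operator $G(D)$ of $\omega$-class, and to reduce the claim to showing that two explicit convolutions lie in $L^p(\R^N,\exp(\mu\omega(x))dx)$. First I would fix a strongly elliptic ultradifferential operator $G(D)$ of $\omega$-class, so that $|G(\xi)|\geq M\exp(l\omega(\xi))$ for every real $\xi\neq 0$. Choosing $\chi\in\cD_\omega(\R^N)$ with $\chi\equiv 1$ near the origin and setting $\hat{E}:=(1-\chi)/G$, one obtains $G(D)E=\delta-\psi$ with $\psi:=\mathcal{F}^{-1}(\chi)\in\cS_\omega(\R^N)$ and $E:=\mathcal{F}^{-1}((1-\chi)/G)$. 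Since $(1-\chi)/G$ is an $\omega$-ultradifferentiable function all of whose derivatives decay like $\exp(-l\omega)$, elliptic regularity (as in \cite{GO,BE}) yields a splitting $E=E_0+E_1$ with $E_0\in\cE'_\omega(\R^N)$ of compact support and $E_1\in\cE_\omega(\R^N)$ such that $E_1$ and all its derivatives decay like $\exp(-c\omega)$, the constant $c$ being as large as we please once $l$ is taken large.

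Convolving the identity $\delta=G(D)E+\psi$ with $T$ gives the algebraic decomposition
\begin{equation*}
T=T\star\delta=G(D)(T\star E)+T\star\psi.
\end{equation*}
Hence it suffices to set $f:=T\star E$ and $g:=T\star\psi$ and to prove that both belong to $L^p(\R^N,\exp(\mu\omega(x))dx)$; the operator $G(D)$ then yields the asserted representation, its action being legitimate by Proposition \ref{dlpngd}. To control $g$ I would note that, by the closed graph theorem (the source $\cD_\omega(\R^N)$ being webbed and the target Banach), the linear map $\phi\mapsto T\star\phi$ is continuous from $\cD_\omega(\R^N)$ into $L^p(\R^N,\exp(\mu\omega(x))dx)$; since $\cD_\omega(\R^N)$ is dense in $\cS_\omega(\R^N)$ this map extends continuously, and because $\psi\in\cS_\omega(\R^N)$ we obtain $g=T\star\psi\in L^p(\R^N,\exp(\mu\omega(x))dx)$.

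The delicate point is $f=T\star E$, for $E$ is neither compactly supported nor globally smooth. For the compactly supported part $E_0\in\cE'_\omega(\R^N)$ one approximates $E_0$ by elements of $\cD_\omega(\R^N)$ and uses the continuity above to place $T\star E_0$ in the weighted space. For the tail $E_1$ I would use a $\cD_\omega$-partition of unity $\{\eta_k\}_{k\in\Z^N}$ subordinate to unit cubes centred at the points of $\Z^N$, so that each $\eta_kE_1\in\cD_\omega(\R^N)$ and $T\star E_1=\sum_{k}T\star(\eta_kE_1)$. Estimating $\|(T\star(\eta_kE_1))\exp(\mu\omega)\|_p$ by the uniform bounds furnished by the Banach--Steinhaus theorem and exploiting the $\exp(-c\omega)$ decay of $E_1$, one makes the series absolutely convergent in $L^p(\R^N,\exp(\mu\omega(x))dx)$ provided $c$ (hence $l$) is chosen large enough to dominate both the weight $\exp(\mu\omega)$ and the growth in $k$ of the constants. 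Summing the two contributions gives $f\in L^p(\R^N,\exp(\mu\omega(x))dx)$.

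The main obstacle is precisely this last step: transferring the hypothesis, which only controls $T\star\phi$ for compactly supported $\phi$, to the convolution with the non-compactly supported parametrix $E$. The crux is to balance the spatial decay of the parametrix tail $E_1$ against the operator norms of $\phi\mapsto T\star\phi$ over cubes receding to infinity, which are a priori unbounded in $k$; this is exactly why the strong ellipticity constant $l$ must be taken sufficiently large, so that the decay $\exp(-c\omega)$ beats both the weight and the $k$-dependence of the constants.
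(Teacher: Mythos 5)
Your global Fourier parametrix runs into problems that the paper's proof is specifically designed to avoid, and the limiting arguments you invoke to patch them do not work. First, for $T\in\cD'_\omega(\R^N)$ the convolutions $T\star\psi$ and $T\star E$ are not even defined a priori: $\psi=\cF^{-1}(\chi)$ and the parametrix $E$ have no compact support, and at this stage nothing is known about the growth of $T$ (that $T$ is tempered is part of what must be proved). Second, the extension-by-density step for $g=T\star\psi$ is invalid: the closed graph theorem gives continuity of $\phi\mapsto T\star\phi$ for the (LF)-topology of $\cD_\omega(\R^N)$, but a map continuous for this finer topology need not be continuous for the topology induced by $\cS_\omega(\R^N)$, hence need not extend to $\cS_\omega(\R^N)$ (compare $\phi\mapsto\sum_{k} e^{e^{k}}\phi(k)$, which is continuous on $\cD_\omega(\R)$ but diverges on a Gaussian, so it has no continuous extension to $\cS_\omega(\R)$). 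The same objection defeats your treatment of $E_0$: since $E_0\in\cE'_\omega(\R^N)$ is a distribution, its mollifications converge to it only in $\cE'_\omega(\R^N)$, not in $\cD_\omega(\R^N)$, so ``the continuity above'' says nothing about $T\star E_0$. Third, for the tail $E_1$ you concede that the local operator norms $C_k$ of $\phi\mapsto T\star\phi$ on cubes centred at $k\in\Z^N$ are a priori unbounded and propose to beat them by taking the ellipticity constant $l$ large; but no bound of the form $C_k\leq C\exp(c\,\omega(k))$ is established, nor does one follow from the hypothesis without further argument, so for a fixed $T$ these constants may grow faster than any such rate and no admissible choice of $G$ can compensate.

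The missing idea --- and the heart of the paper's argument --- is a \emph{uniform} bound obtained from Banach--Steinhaus: since $|\langle T\star\check{\varphi},\check{\phi}\rangle|=|\langle T\star\phi,\varphi\rangle|\leq\|\exp(\mu\omega)(T\star\phi)\|_p$ for every $\varphi$ in the intersection of $\cD_\omega(\R^N)$ with the unit ball $V_{p'}$ of $L^{p'}(\R^N,\exp(-\mu\omega(x))dx)$, the family $\{T\star\check{\varphi}\colon \varphi\in V_{p'}\cap\cD_\omega(\R^N)\}$ is weakly bounded, hence equicontinuous in $\cD'_\omega(\R^N)$, which yields fixed $C,m$ and a fixed compact $K_1$ with $\|\exp(\mu\omega)(T\star\phi)\|_p\leq C\,p_{K_1,m}(\phi)$ for all $\phi\in\cD_\omega(K_1)$. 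This uniform estimate is exactly what permits passage to limits of merely finite regularity. The paper then uses a \emph{compactly supported} parametrix, $\delta=G(D)\Gamma+\chi$ from \cite[Corollary 2.6]{GO} (Braun's form of Komatsu's second structure theorem), with $\chi\in\cD_\omega(K_2)$ and $\Gamma\in\cE_{\omega,2m}(K_2)\cap\cD(K_2)$ compactly supported but of finite class; via mollification, the uniform bound shows that $\{T\star(\phi\star\eta_\epsilon)\}$ is Cauchy in the weighted $L^p$ space and converges uniformly to $T\star\phi$, so $T\star\Gamma\in L^{p}(\R^N,\exp(\mu\omega(x))dx)$, and one takes $f=T\star\Gamma$, $g=T\star\chi$. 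Because everything convolved with $T$ has compact support, the global difficulties you are fighting --- undefined convolutions, extension to $\cS_\omega(\R^N)$, uncontrolled constants at infinity --- never arise. To salvage your approach you would first have to prove the equicontinuity estimate and then replace the global parametrix by a compactly supported one, which is precisely the paper's proof.
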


\begin{proof}
	Let $V_{p'}$ denote the unit ball of $L^{p'}(\mathbb{R}^N, \exp(-\mu\omega(x))dx)$, $p'$ being the conjugate exponent of $p$. Then,  for a fixed $\varphi \in V_{p'}\cap \cD_{\omega}(\R^N)$,  we have for every $\phi\in\cD_\omega(\R^N)$ that
	\begin{equation*}
	|\langle T\star \check{\varphi},\check{\phi}\rangle|=|\langle T\star \phi,{\varphi}\rangle|\leq \|\exp(\mu\omega)(T\star\phi)\|_p\|\exp(-\mu\omega)\varphi\|_{p'}\leq \|\exp(\mu\omega)(T\star\phi)\|_p.
	\end{equation*}
	This implies that  $\{T\star \check{\varphi}\, :\,\varphi \in V_{p'}\cap \cD_{\omega}(\R^N)  \}$ is a weakly bounded subset, and hence, an equicontinuous subset of $\cD'_\omega(\R^N)$. Therefore, if $K_1:=[-2,2]^N$, we can find $m\in \N$ and $C>0$ such that
	\begin{equation*}
	|\langle T\star \check{\varphi},\phi\rangle|\leq C p_{K_1,m}(\phi)
	\end{equation*}
	for each $\phi\in \cD_\omega(K_1)$ and $\varphi \in V_{p'}\cap \cD_{\omega}(\R^N)$. Accordingly, for each $\phi\in \cD_\omega(K_1)$ and $\varphi \in \cD_{\omega}(\R^N)$
	\begin{equation}\label{eq24}
	|\langle T\star \check{\varphi},\phi\rangle|\leq C p_{K_1,m}(\phi)\|\exp(-\mu\omega)\varphi\|_{p'}.
	\end{equation}
	We now take $K_2:=[-1,1]^N$ and we show that $T\star \phi \in L^{p}(\R^N,\exp(\mu\omega(x))dx)$ for every $\phi\in \cE_{\omega,2m}(K_2)\cap \cD(K_2)$. Let $\eta\in\cD_\omega(K_2)$ be such that $\eta\geq0$, $\int_{K_1}\exp(\mu\omega(x))\eta(x)\,dx=1$ and consider $\eta_\epsilon(x):=\frac{\eta(\frac{x}{\epsilon})}{\epsilon}$, for $x\in\R^N$ and $\epsilon>0$. Then, for $\phi\in \cE_{\omega,2m}(K_2)\cap \cD(K_2)$, $\phi\star\eta_\epsilon\in \cD_\omega(K_1)$, $0<\epsilon<1$, and $\phi\star\eta_\epsilon\to\phi$ in $\cE_{\omega,m}(K_1)\cap \cD(K_1)$ , as $\epsilon\to0^+$. By assumption, $T\star(\phi\star\eta_\epsilon)\in L^{p}(\R^N,\exp(\mu\omega(x))dx)$, $0<\epsilon<1$. On the other hand,    from (\ref{eq24}) it follows for every $0<\epsilon<1$ that
	\begin{equation}\label{eq25}
	\|\exp(\mu\omega)( T\star(\phi\star\eta_\epsilon)) \|_p\leq C p_{K_1,m}(\phi\star\eta_\epsilon)\leq Cp_{K_1,m}(\phi).
	\end{equation}
Thanks to inequality (\ref{eq25}),  we  get  that $\{T\star(\phi\star\eta_\epsilon)\}_{0<\epsilon<1}$ is a Cauchy net in the space $L^{p}(\R^N,\exp(\mu\omega(x))dx)$, thereby a convergent net in $L^{p}(\R^N,\exp(\mu\omega(x))dx)$. Since $T\in\cD'_\omega(\R^N)$, there exist $l\in\N$ and  $C'>0$ such that
	\begin{equation*}
	|\langle T,\varphi\rangle|\leq C'p_{K_2,l}(\phi)
	\end{equation*}
	for each $\phi\in\cD_\omega(K_2)$. Then $T$ can be continuously extended to $\cE_{\omega,l}(K_2)\cap \cD(K_2)$. Hence, if $m$ is large enough, we can conclude  for every $x\in\R^N$ that
	\begin{equation*}
	|(T\star(\phi\star\eta_\epsilon)-T\star\phi)(x)|\leq C'p_{K_2,l}(\phi\star\eta_\epsilon-\phi)\leq C'p_{K_1,m}(\phi\star\eta_\epsilon-\phi).
	\end{equation*}
	So, $T\star(\phi\star\eta_\epsilon)\to T\star\phi$ in $C_b(\R^N)$  as $\epsilon\to 0^+$. From (\ref{eq25}) we obtain that $T\star \phi \in L^{p}(\R^N,\exp(\mu\omega(x))dx)$. Now applying \cite[Corollary 2.6]{GO},
 we can write $\delta=G(D)\Gamma+\chi$, where $G(D)$ is a strongly elliptic ultradifferential operator of $\omega$-class, $\chi\in\cD_\omega(K_2)$ and $\Gamma\in\cE_{\omega,2m}(K_2)\cap \cD(K_2)$. To get the claim, is sufficient to take $f:=T\star\Gamma$ and $g:=T\star\chi$.
	\end{proof}

The results above lead to the following result about the elements of $\cD'_{L^p_\mu,\omega}(\R^N)$. 
%and of $(\overline{\cD}_{L^\infty_n,\omega}(\mathbb{R}^N))'$.

\begin{thm}\label{cardlpdual}
	Let $\omega$ be a non-quasianalytic weight function with the additional condition $\log(1+t)=o(\omega(t))$ as $t\to\infty$ and  $T\in\cD'_\omega(\R^N)$. Let $1\leq p <\infty$ and $\mu\in\R$. Consider the following properties.
	\begin{enumerate}
	\item  $T\in \cD'_{L^p_\mu,\omega}(\R^N)$.
	\item There exists $\mu'\in\R$ with $\mu'\geq \mu$ such that $T\star\phi \in L^{p'}(\R^N,\exp(-\mu'\omega(x))dx)$ for every $\phi\in\cD_\omega(\R^N)$, $p'$ being the conjugate exponent of $p$.
	\item
	 There exist  $\mu'\in\R$ with $\mu'\geq \mu$, $G(D)$ a strongly elliptic ultradifferential operator of $\omega$-class and $f, g\in L^{p'}(\R^N,\exp(-\mu'\omega(x))dx)$ such that $T=G(D)f+g$, $p'$ being the conjugate exponent of $p$.
	 \end{enumerate}
 Then $(1)\Rightarrow (2)\Rightarrow (3)$. If, in addition, the weigth function $\omega$ is sub-additive, then all the assertions are equivalent.
\end{thm}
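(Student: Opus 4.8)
The plan is to establish the chain $(1)\Rightarrow(2)\Rightarrow(3)$ for an arbitrary weight and then to close the cycle with $(3)\Rightarrow(1)$ under the extra sub-additivity hypothesis, each arrow resting on one of the preparatory results already proved. First, $(1)\Rightarrow(2)$ is nothing more than Proposition \ref{primaimp}, which for a general weight produces some $\mu'\geq\mu$ with $T\star\phi\in L^{p'}(\R^N,\exp(-\mu'\omega(x))dx)$ for every $\phi\in\cD_\omega(\R^N)$. For $(2)\Rightarrow(3)$ I would invoke Proposition \ref{secondaimp}, but read with its exponent taken to be $p'$ and its weight parameter taken to be $-\mu'$: the hypothesis of that proposition is then exactly condition $(2)$, and its conclusion supplies a strongly elliptic ultradifferential operator $G(D)$ of $\omega$-class together with $f,g\in L^{p'}(\R^N,\exp(-\mu'\omega(x))dx)$ with $T=G(D)f+g$, which is condition $(3)$ with the same $\mu'$. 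The only point needing care is that Proposition \ref{secondaimp} is stated for a finite exponent, so this substitution covers $1<p<\infty$ (where $1\leq p'<\infty$); the borderline case $p=1$, for which $p'=\infty$, would instead require the corresponding $L^\infty$-statement, obtained by running the same argument in the $\cB_{L^\infty_\mu,\omega}(\R^N)$-scale.

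The remaining implication $(3)\Rightarrow(1)$ is the genuinely delicate one, and this is where sub-additivity enters decisively. The obstruction is a weight mismatch: the functions $f,g$ live in $L^{p'}(\R^N,\exp(-\mu'\omega(x))dx)$ with $\mu'\geq\mu$, whereas the membership of a function $h$ in $\cD'_{L^p_\mu,\omega}(\R^N)$ through the pairing $\langle h,\psi\rangle=\int h\psi$ is governed by Hölder's inequality,
\[
|\langle h,\psi\rangle|\leq \|\exp(-\mu'\omega)h\|_{p'}\,\|\exp(\mu'\omega)\psi\|_p,
\]
and the factor $\|\exp(\mu'\omega)\psi\|_p$ is dominated by $t_{m,\mu,p}(\psi)$ only when $\mu'\leq\mu$. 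Hence the embedding $L^{p'}(\R^N,\exp(-\mu'\omega(x))dx)\hookrightarrow\cD'_{L^p_\mu,\omega}(\R^N)$ really forces $\mu'=\mu$. Sub-additivity provides exactly this alignment: by Remark \ref{R.SUB} the sub-additive refinement of Proposition \ref{primaimp} gives $(1)\Rightarrow(2)$ with $\mu'=\mu$, and then running $(2)\Rightarrow(3)$ as above yields the decomposition $(3)$ with $\mu'=\mu$ as well. Thus in the sub-additive setting the three conditions may all be realized with the common weight $\mu$, and it is these aligned versions that I would prove mutually equivalent.

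With $\mu'=\mu$ in hand I would finish as follows. The Hölder estimate above, now using $\|\exp(\mu\omega)\psi\|_p\leq t_{m,\mu,p}(\psi)$ (the $\alpha=0$ term of the defining norm, since $\varphi^*_\omega(0)=0$), shows that every $h\in L^{p'}(\R^N,\exp(-\mu\omega(x))dx)$ defines a continuous functional on $\cD_{L^p_\mu,\omega}(\R^N)$, i.e. $L^{p'}(\R^N,\exp(-\mu\omega(x))dx)\hookrightarrow\cD'_{L^p_\mu,\omega}(\R^N)$; in particular $f,g\in\cD'_{L^p_\mu,\omega}(\R^N)$. Next, by Proposition \ref{dlpngd} the operator $G(D)$ maps $\cD'_{L^p_\mu,\omega}(\R^N)$ continuously into itself, so $G(D)f\in\cD'_{L^p_\mu,\omega}(\R^N)$, and adding gives $T=G(D)f+g\in\cD'_{L^p_\mu,\omega}(\R^N)$, which is $(1)$. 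The main obstacle throughout is precisely the weight-matching just isolated: without sub-additivity Proposition \ref{primaimp} furnishes only $\mu'\geq\mu$, and the embedding into $\cD'_{L^p_\mu,\omega}(\R^N)$ genuinely fails for $\mu'>\mu$, so sub-additivity is not a technical convenience but the hypothesis that actually makes the converse true.
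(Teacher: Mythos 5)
Your proposal is correct and follows essentially the same route as the paper: $(1)\Rightarrow(2)$ via Proposition \ref{primaimp}, $(2)\Rightarrow(3)$ via Proposition \ref{secondaimp} applied with exponent $p'$ and weight parameter $-\mu'$, and, under sub-additivity, closing the cycle by using Remark \ref{R.SUB} to align $\mu'=\mu$ and then combining the embedding $L^{p'}(\R^N,\exp(-\mu\omega(x))dx)\hookrightarrow\cD'_{L^p_\mu,\omega}(\R^N)$ with the continuity of $G(D)$ on $\cD'_{L^p_\mu,\omega}(\R^N)$ from Proposition \ref{dlpngd}. Your explicit flagging of the borderline case $p=1$ (where $p'=\infty$ falls outside the stated scope of Proposition \ref{secondaimp}) and of the fact that the equivalence must be read with the aligned weight $\mu'=\mu$ is, if anything, more careful than the paper's own wording.
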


\begin{proof}
	$(1)\Rightarrow (2)$ follows from Proposition \ref{primaimp}.
	
	$(2)\Rightarrow (3)$ follows  from Proposition \ref{secondaimp}. 
	
	In case the weight function $\omega$ is sub-additive, by Remark \ref{R.SUB} property (1) implies property (2) with $\mu'=\mu$ and so,  property (2) implies property (3) with $\mu'=\mu$. Therefore, we can assume that $\mu'=\mu$ in (3). Now, it is clear that
	$(3)\Rightarrow (1)$. Indeed, from the facts that $L^{p'}(\R^N,\exp(-\mu\omega(x)dx))\subset \cD'_{L^p_\mu,\omega}(\R^N)$ and $G(D)\in \cL(\cD'_{L^p_\mu,\omega}(\R^N))$ it follows that $T=G(D)f+g\in  \cD'_{L^p_\mu,\omega}(\R^N)$.	
	%Analogously for $(\overline{\cD}_{L^\infty_n,\omega}(\mathbb{R}^N))'$.
\end{proof}

Taking into account Remark \ref{R.Coincidenza} and applying Theorems \ref{cardlpdual} together with Proposition  \ref{ocgd}, we can obtain a second structure theorem for both the spaces $\cO'_{C,\omega}(\R^N)$ and $\cS'_\omega(\R^N)$. 

\begin{thm}[Second structure theorem for $\cO'_{C,\omega}(\R^N) $]
		Let $\omega$ be a non-quasianalytic weight function with the additional condition $\log(1+t)=o(\omega(t))$ as $t\to\infty$,  $T\in\cD'_\omega(\R^N)$ and $1\leq p<\infty$. Then $T\in \cO'_{C,\omega}(\R^N)$ if and only if for every $n\in\N$ there exist $n'(=[n/K])\in\N$ with $n'\leq n$, a strongly elliptic differential operator $G(D)$ of $\omega$-class and $f,g\in L^{p'}(\R^N, \exp(n'\omega(x))dx)$ such that $T=G(D)f+g$.

\end{thm}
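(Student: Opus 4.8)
The plan is to reduce the statement to the weighted spaces $\cD'_{L^p_{-n},\omega}(\R^N)$ and to reuse the analysis of $T\star\phi$ already carried out in Propositions \ref{primaimp}, \ref{secondaimp} and \ref{dlpngd}. The starting point is the topological identity $\cO_{C,\omega}(\R^N)=\bigcup_{n=1}^\infty \cD_{L^p_{-n},\omega}(\R^N)$ recorded in Remark \ref{R.Coincidenza} (via Proposition \ref{incoc}), which exhibits $\cO_{C,\omega}(\R^N)$ as the inductive limit of the increasing sequence $\cD_{L^p_{-1},\omega}(\R^N)\hookrightarrow\cD_{L^p_{-2},\omega}(\R^N)\hookrightarrow\cdots$ of Fr\'echet spaces. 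Since a linear functional on a locally convex inductive limit is continuous exactly when each of its restrictions is, and since each $\cD'_{L^p_{-n},\omega}(\R^N)$ embeds in $\cD'_\omega(\R^N)$ thanks to the dense inclusion $\cD_\omega(\R^N)\hookrightarrow\cD_{L^p_{-n},\omega}(\R^N)$ of Proposition \ref{propdlpn}(3), I would first establish the set-theoretic identity
\[
\cO'_{C,\omega}(\R^N)=\bigcap_{n=1}^\infty \cD'_{L^p_{-n},\omega}(\R^N),
\]
so that $T\in\cO'_{C,\omega}(\R^N)$ if and only if $T\in\cD'_{L^p_{-n},\omega}(\R^N)$ for every $n\in\N$.

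For the necessity I would fix $n\in\N$ and apply Proposition \ref{primaimp} to $T\in\cD'_{L^p_{-n},\omega}(\R^N)$ with $\mu=-n<0$: its proof yields $\mu'=\mu/K=-n/K$, hence $T\star\phi\in L^{p'}(\R^N,\exp((n/K)\omega(x))\,dx)$ for every $\phi\in\cD_\omega(\R^N)$. Feeding this into Proposition \ref{secondaimp} (applied with exponent $p'$ and weight parameter $n/K$) produces a strongly elliptic ultradifferential operator $G(D)$ of $\omega$-class and $f,g\in L^{p'}(\R^N,\exp((n/K)\omega(x))\,dx)$ with $T=G(D)f+g$. Since $n':=[n/K]\leq n/K$ and $\omega\geq 0$, one has $\exp(n'\omega)\leq\exp((n/K)\omega)$, so that $f,g\in L^{p'}(\R^N,\exp(n'\omega(x))\,dx)$ with $n'=[n/K]\leq n$, which is the asserted decomposition.

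For the sufficiency I would deliberately avoid the implication $(3)\Rightarrow(1)$ of Theorem \ref{cardlpdual}, which needs sub-additivity; instead the quantifier ``for every $n$'' in the hypothesis lets me absorb the factor $K$. Fix $m\in\N$ and choose $n\in\N$ so large that $[n/K]\geq m$. By assumption $T=G(D)f+g$ with $f,g\in L^{p'}(\R^N,\exp([n/K]\omega(x))\,dx)$; as $m\leq[n/K]$ and $\omega\geq0$, this space is contained in $L^{p'}(\R^N,\exp(m\omega(x))\,dx)$, which in turn is contained in $\cD'_{L^p_{-m},\omega}(\R^N)$ (the inclusion $L^{p'}(\R^N,\exp(-\mu\omega(x))\,dx)\subset\cD'_{L^p_\mu,\omega}(\R^N)$ used in the proof of Theorem \ref{cardlpdual}, with $\mu=-m$). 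Hence $f,g\in\cD'_{L^p_{-m},\omega}(\R^N)$, and since $G(D)\in\cL(\cD'_{L^p_{-m},\omega}(\R^N))$ by Proposition \ref{dlpngd}, it follows that $T=G(D)f+g\in\cD'_{L^p_{-m},\omega}(\R^N)$. As $m$ is arbitrary, $T\in\bigcap_{m}\cD'_{L^p_{-m},\omega}(\R^N)=\cO'_{C,\omega}(\R^N)$.

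The step I expect to be most delicate is the bookkeeping of the weight exponents: it is the constant $K$ from condition $(\alpha)$, entering through the value $\mu'=\mu/K$ in Proposition \ref{primaimp}, that forces the truncated exponent $n'=[n/K]$, and one must keep straight that $L^{p'}(\R^N,\exp(c\omega(x))\,dx)$ and $\cD'_{L^p_{-c},\omega}(\R^N)$ shrink as $c$ grows while the predual spaces $\cD_{L^p_{-c},\omega}(\R^N)$ expand. The conceptual point to highlight is that, unlike Theorem \ref{cardlpdual}, no sub-additivity of $\omega$ is required here, precisely because the ``for every $n$'' formulation of the decomposition allows one to swallow the $K$-gap by choosing $n$ large for each target index $m$.
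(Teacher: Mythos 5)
Your proposal is correct and is essentially the paper's own argument: the paper proves this theorem by exactly the combination you use, namely Remark \ref{R.Coincidenza} (yielding $\cO'_{C,\omega}(\R^N)=\bigcap_{n}\cD'_{L^p_{-n},\omega}(\R^N)$), Theorem \ref{cardlpdual} — whose implications $(1)\Rightarrow(2)\Rightarrow(3)$ are precisely Propositions \ref{primaimp} and \ref{secondaimp}, producing the exponent $n'=[n/K]$ from the case $\mu=-n<0$ — and the continuity of ultradifferential operators on the dual spaces for the converse. Your only deviation, invoking Proposition \ref{dlpngd} at the fixed step $\cD'_{L^p_{-m},\omega}(\R^N)$ with $n$ chosen so that $[n/K]\geq m$ rather than citing Proposition \ref{ocgd} wholesale, is just the careful way of implementing the paper's one-line pointer (since the $f,g$ attached to a single $n$ need not lie in $\cO'_{C,\omega}(\R^N)$ itself), and correctly exploits the ``for every $n$'' quantifier in place of sub-additivity.
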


\begin{thm}[Second structure theorem for $\cS'_{\omega}(\R^N) $]
	Let $\omega$ be a non-quasianalytic weight function with the additional condition $\log(1+t)=o(\omega(t))$ as $t\to\infty$,  $T\in\cD'_\omega(\R^N)$ and $1\leq p<\infty$. Then $T\in \cS'_{\omega}(\R^N)$ if and only if there exists $\mu>0$, a strongly elliptic differential operator $G(D)$ of $\omega$-class and $f,g\in L^{p'}(\R^N, \exp(-\mu\omega(x))dx)$ such that $T=G(D)f+g$.
	\end{thm}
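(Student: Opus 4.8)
The plan is to reduce the statement to the characterisation of $\cD'_{L^p_\mu,\omega}(\R^N)$ already obtained in Theorem \ref{cardlpdual}, by exploiting the representation $\cS_\omega(\R^N)=\bigcap_{\mu>0}\cD_{L^p_\mu,\omega}(\R^N)$ recorded in \eqref{eq.SD}. The preliminary step I would establish is the set-theoretic dual identity
\[
\cS'_\omega(\R^N)=\bigcup_{\mu>0}\cD'_{L^p_\mu,\omega}(\R^N).
\]
The inclusion $\supseteq$ is immediate: for each $\mu>0$ the inclusion $\cS_\omega(\R^N)\hookrightarrow\cD_{L^p_\mu,\omega}(\R^N)$ is continuous by \eqref{eq.SD}, so every $T\in\cD'_{L^p_\mu,\omega}(\R^N)$ restricts to an element of $\cS'_\omega(\R^N)$. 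For $\subseteq$, if $T\in\cS'_\omega(\R^N)$ then, since the Fr\'echet topology of $\cS_\omega(\R^N)$ is generated by the norms $\{\sigma_{m,m,p}\}_{m\in\N}$ (Proposition \ref{P.norme}), there exist $m\in\N$ and $C>0$ with $|\langle T,\phi\rangle|\leq C\sigma_{m,m,p}(\phi)=C\,t_{m,m,p}(\phi)$ for all $\phi\in\cS_\omega(\R^N)$, using $\sigma_{m,m,p}=t_{m,m,p}$ from Remark \ref{R.Coincidenza}. As $t_{m,m,p}$ is one of the norms defining the topology of $\cD_{L^p_m,\omega}(\R^N)$ and $\cS_\omega(\R^N)$ is dense there (because $\cD_\omega(\R^N)\su\cS_\omega(\R^N)$ is dense in $\cD_{L^p_m,\omega}(\R^N)$ by Proposition \ref{propdlpn}(3)), the functional $T$ extends uniquely and continuously to $\cD_{L^p_m,\omega}(\R^N)$; that is, $T\in\cD'_{L^p_m,\omega}(\R^N)$ with $m>0$.

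Granting this identity, the ``only if'' direction is quick. If $T\in\cS'_\omega(\R^N)$, the above yields some $\mu_0>0$ with $T\in\cD'_{L^p_{\mu_0},\omega}(\R^N)$, i.e.\ property $(1)$ of Theorem \ref{cardlpdual} holds for $\mu_0$. Applying the implication $(1)\Rightarrow(3)$ of that theorem---valid for arbitrary $\omega$, with no sub-additivity required---produces some $\mu'\geq\mu_0$, a strongly elliptic ultradifferential operator $G(D)$ of $\omega$-class and $f,g\in L^{p'}(\R^N,\exp(-\mu'\omega(x))dx)$ with $T=G(D)f+g$. Since $\mu'\geq\mu_0>0$, taking $\mu:=\mu'$ gives the asserted representation. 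For the ``if'' direction I would argue directly rather than through the full equivalence in Theorem \ref{cardlpdual}: suppose $T=G(D)f+g$ with $f,g\in L^{p'}(\R^N,\exp(-\mu\omega(x))dx)$ for some $\mu>0$. A H\"older estimate $|\int h\phi\,dx|\leq\|\exp(-\mu\omega)h\|_{p'}\|\exp(\mu\omega)\phi\|_p\leq\|\exp(-\mu\omega)h\|_{p'}\,t_{m,\mu,p}(\phi)$ (bounding $\|\exp(\mu\omega)\phi\|_p$ by the $\alpha=0$ term of $t_{m,\mu,p}$) shows $L^{p'}(\R^N,\exp(-\mu\omega(x))dx)\su\cD'_{L^p_\mu,\omega}(\R^N)$, so $f,g\in\cD'_{L^p_\mu,\omega}(\R^N)$. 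By Proposition \ref{dlpngd}, $G(D)$ maps $\cD'_{L^p_\mu,\omega}(\R^N)$ continuously into itself, whence $T=G(D)f+g\in\cD'_{L^p_\mu,\omega}(\R^N)\su\cS'_\omega(\R^N)$ by the dual identity.

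The one point that must be handled with care is that Theorem \ref{cardlpdual} only asserts $(1)\Leftrightarrow(3)$ for sub-additive $\omega$, whereas the present statement claims an equivalence for all admissible $\omega$. The resolution, and the reason no sub-additivity is needed, is that $\cS'_\omega(\R^N)$ is the \emph{union} over all $\mu>0$: in the ``only if'' direction I am free to enlarge the exponent to the value $\mu'$ delivered by $(1)\Rightarrow(3)$, and in the ``if'' direction the hypothesis already fixes a single $\mu>0$ for which $T\in\cD'_{L^p_\mu,\omega}(\R^N)$ is checked by hand via the embedding $L^{p'}(\R^N,\exp(-\mu\omega(x))dx)\su\cD'_{L^p_\mu,\omega}(\R^N)$ and the mapping property of $G(D)$. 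Thus the loss of control on the precise weight exponent in Theorem \ref{cardlpdual}(3) is harmless, and the equivalence holds in full generality.
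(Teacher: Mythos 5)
Your proof is correct and takes essentially the same route the paper intends: the paper states this theorem without a written proof, indicating only that it follows from Remark \ref{R.Coincidenza}, Theorem \ref{cardlpdual} and Proposition \ref{ocgd}, and your argument supplies exactly those steps --- the dual identity $\cS'_\omega(\R^N)=\bigcup_{\mu>0}\cD'_{L^p_\mu,\omega}(\R^N)$, then the implication $(1)\Rightarrow(3)$ of Theorem \ref{cardlpdual} (valid without sub-additivity, with the loss $\mu'\geq\mu_0$ absorbed by the existential quantifier on $\mu$) for necessity, and the embedding $L^{p'}(\R^N,\exp(-\mu\omega(x))dx)\su\cD'_{L^p_\mu,\omega}(\R^N)$ together with the continuity of $G(D)$ for sufficiency. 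The only cosmetic difference is that you invoke Proposition \ref{dlpngd} on $\cD'_{L^p_\mu,\omega}(\R^N)$ for the converse where the paper points to Proposition \ref{ocgd} on $\cS'_\omega(\R^N)$, which is the same fact transported by your dual identity.
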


\section{$\cO'_{C,\omega}(\R^N)$ is the space of convolutors of the spaces $\cS_\omega(\R^N)$ and $\cS'_\omega(\R^N)$ 
}
In this section we show that  $\mathcal{O}'_{C,\omega}(\mathbb{R}^N)$ is the space of convolutors of the spaces $\cS_\omega(\R^N)$ and $\cS'_\omega(\R^N)$. 
%In order to do theiWe show this using the Structure Theorem \ref{struoc} for $\mathcal{O}'_{C,\omega}(\mathbb{R}^N)$ and the Theorem \ref{cardlpdual} on the representation of the elements of $(\cD_{L^p_n,\omega}(\R^N))'$. First of all, we prove
So, we begin by proving that the convolution beetween elements of  $\mathcal{S}'_{\omega}(\mathbb{R}^N)$ with the ones of  $\mathcal{S}_{\omega}(\mathbb{R}^N)$ belongs to $\mathcal{O}_{C,\omega}(\mathbb{R}^N)$. To this end, we observe the following facts.

\begin{lem}\label{cinf}
	Let $\omega$ be a non-quasianalytic weight function. If $T \in \mathcal{S}'_\omega(\mathbb{R}^N)$ and $f \in \mathcal{S}_\omega(\mathbb{R}^N)$, then the map
	\begin{equation*}
	\mathbb{R}^N \ni x \mapsto \langle T_y, \tau_xf\rangle
	\end{equation*}
	is a $C^\infty$ function in $x\in \mathbb{R}^N$. In particular, for every $\alpha \in \mathbb{N}^N_0$, we have 
	\begin{equation}\label{dp}
	\partial^\alpha_x \langle T_y,\tau_x f\rangle= \langle T_y, \partial^\alpha_x\tau_x f\rangle 
	\end{equation}
\end{lem}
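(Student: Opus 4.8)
The plan is to reduce the statement to a smoothness property of the vector-valued map $\Phi\colon\R^N\to\cS_\omega(\R^N)$, $\Phi(x):=\tau_x f$, and then to transport it to the scalar function $x\mapsto\langle T_y,\tau_x f\rangle=\langle T,\Phi(x)\rangle$ using the continuity of $T$. Since $\cS_\omega(\R^N)$ is stable under translations (Remark \ref{in s}(2)), $\Phi$ is well defined; moreover $\partial^\alpha f\in\cS_\omega(\R^N)$ and, as functions of $y$, one has $\partial^\alpha_x\tau_x f=\tau_x\partial^\alpha f$. Because $T$ is a continuous linear functional and continuous linear maps commute with Fr\'echet differentiation, once $\Phi$ is known to be of class $C^\infty$ as an $\cS_\omega(\R^N)$-valued map with $\partial^\alpha_x\Phi(x)=\tau_x\partial^\alpha f$, it follows at once that $x\mapsto\langle T,\Phi(x)\rangle$ is $C^\infty$ and that $\partial^\alpha_x\langle T_y,\tau_x f\rangle=\langle T,\tau_x\partial^\alpha f\rangle=\langle T_y,\partial^\alpha_x\tau_x f\rangle$, which is precisely \eqref{dp}.

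First I would prove that $\Phi$ is $C^1$ with $\partial_{x_j}\Phi(x)=\tau_x\partial_j f$; the full statement then follows by induction on $|\alpha|$, since $\tau_x\partial_j f$ is again the translate of a fixed element of $\cS_\omega(\R^N)$ and the argument iterates verbatim. Fix $x\in\R^N$, $1\le j\le N$, and for $0<|h|\le1$ set
$$
g_h:=\frac{\tau_{x+he_j}f-\tau_x f}{h}-\tau_x\partial_j f\in\cS_\omega(\R^N).
$$
Applying the second-order Taylor formula in the $j$-th variable to $\partial^\alpha f$ gives, for every $\alpha\in\N_0^N$ and $y\in\R^N$,
$$
\partial^\alpha g_h(y)=\frac1h\int_0^h\!\!\int_0^s(\partial^{\alpha+2e_j}f)(y+x+te_j)\,dt\,ds,
$$
whence $|\partial^\alpha g_h(y)|\le\frac{|h|}{2}\sup_{|t|\le|h|}|(\partial^{\alpha+2e_j}f)(y+x+te_j)|$. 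To pass to the weighted norms \eqref{eq.usualnorms}, I would use, for $|t|\le1$, the estimate $\omega(y)\le K\bigl(1+\omega(y+x+te_j)+\omega(|x|+1)\bigr)$, which is \eqref{sub}, to get
$$
\exp(\mu\omega(y))\le \exp\bigl(K\mu(1+\omega(|x|+1))\bigr)\,\exp\bigl(K\mu\,\omega(y+x+te_j)\bigr);
$$
thus the translation only costs a constant $C_{x,\mu}:=\exp\bigl(K\mu(1+\omega(|x|+1))\bigr)$ and an increase of the weight parameter from $\mu$ to $K\mu$.

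Combining the two estimates and bounding $\partial^{\alpha+2e_j}f$ by the defining inequality of the seminorm $q_{2\lambda,K\mu}(f)$, I obtain
$$
\|\exp(\mu\omega)\partial^\alpha g_h\|_\infty\le C_{x,\mu}\,\frac{|h|}{2}\,q_{2\lambda,K\mu}(f)\,\exp\!\left(2\lambda\varphi^*_\omega\!\left(\frac{|\alpha|+2}{2\lambda}\right)\right).
$$
The crucial point is to absorb the last exponential against the factor $\exp\bigl(-\lambda\varphi^*_\omega(|\alpha|/\lambda)\bigr)$ that enters $q_{\lambda,\mu}(g_h)$, uniformly in $\alpha$. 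Here I would invoke the first inequality in \eqref{secondprop} with $s=|\alpha|$, $t=2$, which yields
$$
2\lambda\varphi^*_\omega\!\left(\frac{|\alpha|+2}{2\lambda}\right)-\lambda\varphi^*_\omega\!\left(\frac{|\alpha|}{\lambda}\right)\le\lambda\varphi^*_\omega\!\left(\frac{2}{\lambda}\right),
$$
a constant independent of $\alpha$. Taking the supremum over $\alpha$ gives $q_{\lambda,\mu}(g_h)\le C_{x,\mu}\,\exp\bigl(\lambda\varphi^*_\omega(2/\lambda)\bigr)\,q_{2\lambda,K\mu}(f)\,|h|/2\to0$ as $h\to0$, for every $\lambda,\mu>0$, so the difference quotients converge to $\tau_x\partial_j f$ in $\cS_\omega(\R^N)$. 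An entirely analogous (and simpler) computation, replacing the second difference quotient by a single translation increment, shows that $x\mapsto\tau_x\partial_j f$ is continuous into $\cS_\omega(\R^N)$; since the domain is finite-dimensional, continuity of all partial derivatives makes $\Phi$ of class $C^1$. Iterating produces $\Phi\in C^\infty$ with $\partial^\alpha_x\Phi(x)=\tau_x\partial^\alpha f$, and composing with $T$ finishes the proof.

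The main obstacle is exactly this last absorption step: the Taylor expansion unavoidably produces the two extra derivatives $\partial^{\alpha+2e_j}f$, and the resulting growth factor $\exp\bigl(2\lambda\varphi^*_\omega(\tfrac{|\alpha|+2}{2\lambda})\bigr)$ could not be dominated by $\exp\bigl(\lambda\varphi^*_\omega(|\alpha|/\lambda)\bigr)$ with the \emph{same} parameter $\lambda$, because the convex increments of $\varphi^*_\omega$ are unbounded in $|\alpha|$. Comparing at the doubled scale $2\lambda$ on the side of $f$ and using the convexity inequality \eqref{secondprop} is precisely what makes the two exponentials cancel up to a constant; the remaining passages (the Taylor remainder and the weight shift via \eqref{sub}) are routine.
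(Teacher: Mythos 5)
Your proof is correct and follows essentially the same route as the paper: the paper also reduces the statement to the smoothness of the $\cS_\omega(\R^N)$-valued map $x\mapsto\tau_x f$, but it simply cites this as well-known and invokes \cite[Theorem 27.1]{Treves} to pass from vector-valued smoothness to the scalar function $x\mapsto\langle T_y,\tau_x f\rangle$. The only difference is that you prove the vector-valued smoothness in full (Taylor remainder, weight shift via \eqref{sub}, and absorption of the extra derivatives via the convexity inequality \eqref{secondprop}), which is a sound and complete substitute for the paper's citation.
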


We recall that the notation $T_y$ means that the distribution $T$ acts on a function $\psi(x-y)$, when the latter is regarded as a function of the variable $y$. 

\begin{proof}
	%By Lemma \ref{cs} 
	%
	Since the map $x\to \tau_x f$ is a $C^\infty$ function of $x\in \mathbb{R}^N$ with values in $\mathcal{S}_\omega(\mathbb{R}^N_y)$  as it is well-known, and  $T \in \mathcal{S}'_\omega(\mathbb{R}^N)$, we can apply \cite[Theorem 27.1]{Treves}  to conclude that the map $x \mapsto \langle T_y, \tau_xf\rangle$ is a $C^\infty$ function of $x\in \mathbb{R}^N$ that satisfies $(\ref{dp})$.
\end{proof}

%Lemma \ref{cinf} above means that if $T \in \mathcal{S}'_\omega(\mathbb{R}^N)$ and $f \in \mathcal{S}_\omega(\mathbb{R}^N)$, then $T\star f \in C^\infty(\mathbb{R}^N)$. Actually,  $T\star f \in \mathcal{O}_{C,\omega}(\mathbb{R}^N)$.

\begin{prop}\label{conv}
	Let $\omega$ be a non-quasianalytic weight function. If $T \in \mathcal{S}'_\omega(\mathbb{R}^N)$ and $f \in \mathcal{S}_\omega(\mathbb{R}^N)$, then $T\star f \in \mathcal{O}_{C,\omega}(\mathbb{R}^N)$. Moreover the map $f\mapsto T \star f$ is a continuous linear operator from $\mathcal{S}_\omega(\mathbb{R}^N)$ into $\mathcal{O}_{C,\omega}(\mathbb{R}^N)$.
\end{prop}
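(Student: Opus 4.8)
The plan is to differentiate the convolution under the action of $T$ and then estimate the resulting $\cO_{C,\omega}$-seminorms. First I would note that $\check f\in\cS_\omega(\R^N)$ whenever $f\in\cS_\omega(\R^N)$, so applying Lemma \ref{cinf} to $\check f$ shows that $x\mapsto(T\star f)(x)=\langle T_y,\tau_x\check f\rangle$ is $C^\infty$ on $\R^N$ and, for every $\alpha\in\N_0^N$,
\[
\partial^\alpha(T\star f)(x)=\langle T_y,\partial^\alpha_x\tau_x\check f\rangle=(-1)^{|\alpha|}\langle T_y,(\partial^\alpha f)(-\,\cdot\,-x)\rangle .
\]
Since $\cS_\omega(\R^N)$ is a Fr\'echet space with fundamental system of norms $\{q_{m,m}\}_{m}$, the continuity of $T$ yields $k\in\N$ and $C_T>0$ with $|\langle T,g\rangle|\le C_T\,q_{k,k}(g)$ for all $g\in\cS_\omega(\R^N)$. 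Combining these gives $|\partial^\alpha(T\star f)(x)|\le C_T\,q_{k,k}(h_{\alpha,x})$, where $h_{\alpha,x}:=\partial^\alpha_x\tau_x\check f$, and the whole proof reduces to estimating $q_{k,k}(h_{\alpha,x})$.

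The core is that estimate. Writing $\partial^\beta_y h_{\alpha,x}(y)=\pm(\partial^{\alpha+\beta}f)(-y-x)$ and substituting $z=-y-x$, the weight $\exp(k\omega(y))$ turns into $\exp(k\omega(|z+x|))$. Here I would use that $\omega$ is increasing together with \eqref{sub} to obtain $\omega(|z+x|)\le K(1+\omega(z)+\omega(x))$, which factors out $\exp(kK)\exp(kK\omega(x))$ and leaves $\|\exp(kK\omega)\partial^{\alpha+\beta}f\|_\infty$. This is the decisive point: the weight exponent $n:=kK$ is governed solely by the order $k$ of $T$ and by the constant $K$, and is entirely independent of the order $m$ occurring in the target seminorm. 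It is precisely this uniformity that lands $T\star f$ in $\cO_{C,\omega}(\R^N)$ (the $\bigcup_n\bigcap_m$ space) rather than merely in $\cO_{M,\omega}(\R^N)$, and I regard it as the main obstacle to keep under control.

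It then remains to split the index $\alpha+\beta$. Bounding $\|\exp(kK\omega)\partial^{\alpha+\beta}f\|_\infty\le q_{2M,kK}(f)\exp\!\left(2M\varphi^*_\omega\!\left(\tfrac{|\alpha+\beta|}{2M}\right)\right)$ and invoking the left inequality of \eqref{secondprop} with $s=|\alpha|$, $t=|\beta|$ gives
\[
2M\varphi^*_\omega\!\left(\tfrac{|\alpha+\beta|}{2M}\right)\le M\varphi^*_\omega\!\left(\tfrac{|\alpha|}{M}\right)+M\varphi^*_\omega\!\left(\tfrac{|\beta|}{M}\right).
\]
Choosing $M\ge k$ and using that $\lambda\mapsto\lambda\varphi^*_\omega(t/\lambda)$ is decreasing (a consequence of Lemma \ref{L.Pfistar}(1)), the surplus $M\varphi^*_\omega\!\left(\tfrac{|\beta|}{M}\right)-k\varphi^*_\omega\!\left(\tfrac{|\beta|}{k}\right)\le0$, so the supremum over $\beta$ in $q_{k,k}(h_{\alpha,x})$ is absorbed and only $\exp\!\left(M\varphi^*_\omega\!\left(\tfrac{|\alpha|}{M}\right)\right)$ survives. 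Altogether I would arrive at
\[
|\partial^\alpha(T\star f)(x)|\le C_T\,e^{kK}\,q_{2M,kK}(f)\,\exp\!\left(kK\omega(x)+M\varphi^*_\omega\!\left(\tfrac{|\alpha|}{M}\right)\right).
\]

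Finally, given any target order $m\in\N$, I would set $M:=\max\{m,k\}$; then $M\ge m$ gives $M\varphi^*_\omega\!\left(\tfrac{|\alpha|}{M}\right)\le m\varphi^*_\omega\!\left(\tfrac{|\alpha|}{m}\right)$ by the same monotonicity, whence $r_{m,kK}(T\star f)\le C_T e^{kK}\,q_{2\max\{m,k\},kK}(f)$. With $n=kK$ fixed, this simultaneously shows $T\star f\in\bigcap_m\cO^m_{kK,\omega}(\R^N)\subset\cO_{C,\omega}(\R^N)$ and that each defining seminorm $r_{m,kK}$ of the Fr\'echet step $\proj_m\cO^m_{kK,\omega}(\R^N)$ is dominated by the continuous seminorm $q_{2\max\{m,k\},kK}$ of $\cS_\omega(\R^N)$. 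Hence $f\mapsto T\star f$ is continuous from $\cS_\omega(\R^N)$ into this Fr\'echet step, and since that step includes continuously into the $(LF)$-space $\cO_{C,\omega}(\R^N)$, the asserted continuity of $f\mapsto T\star f$ into $\cO_{C,\omega}(\R^N)$ follows. The index-splitting via \eqref{secondprop} and the monotonicity of $\lambda\varphi^*_\omega(t/\lambda)$ are the technical heart, but become routine once the uniform exponent $n=kK$ has been isolated.
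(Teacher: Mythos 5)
Your proof is correct, but it follows a genuinely different route from the paper's. The paper proves this proposition by invoking its first structure theorem for $\mathcal{S}'_\omega(\mathbb{R}^N)$ (Theorem \ref{stru} with $p=1$): it writes $T=\sum_{\alpha}\partial^\alpha f_\alpha$ with $\{f_\alpha\}_{\alpha\in\N_0^N}$ in the weighted sequence space $\left(\oplus L^\infty(\mathbb{R}^N,\exp(-m\omega(x))\,dx)\right)_{\omega,m,\infty}$, differentiates the convolution term by term, and estimates the resulting integrals; the uniform exponent there is $n=[mK]+1$, where $m$ is the order of the representation of $T$, and the final bound is $r_{k,n}(T\star f)\leq C\sigma_{2k,n,1}(f)$, i.e., against weighted $L^1$-seminorms of $f$. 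You instead bypass the structure theorem entirely: you use only the Fr\'echet continuity estimate $|\langle T,g\rangle|\leq C_T\,q_{k,k}(g)$, Lemma \ref{cinf} applied to $\check f$, condition $(\alpha)$ via \eqref{sub}, and the monotonicity of $\lambda\mapsto\lambda\varphi^*_\omega(t/\lambda)$ (which indeed follows from Lemma \ref{L.Pfistar}(1)), arriving at $r_{m,n}(T\star f)\leq C_T e^{kK} q_{2\max\{m,k\},kK}(f)$ with $n$ determined by the order $k$ of $T$ as a functional. Both arguments hinge on exactly the same key point, which you correctly isolate: the weight exponent $n$ depends only on $T$ and not on the derivative-order index $m$, and this is what places $T\star f$ in the $\bigcup_n\bigcap_m$ space $\cO_{C,\omega}(\R^N)$ rather than merely in $\cO_{M,\omega}(\R^N)$; your concluding continuity argument (continuity into the Fr\'echet step $\proj_{\stackrel{\leftarrow}{m}}\cO^m_{n,\omega}(\R^N)$ followed by the continuous inclusion of the step into the (LF)-space) coincides with the paper's. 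What your approach buys is self-containedness: it needs none of the Section 3 machinery (weighted sequence spaces, Hahn--Banach) behind Theorem \ref{stru}, only elementary properties of $\varphi^*_\omega$; what the paper's approach buys is consistency with its $L^p$-representation theme and reuse of a theorem needed elsewhere anyway. One cosmetic point: $kK$ need not be an integer, so you should take $n:=[kK]+1$ (as the paper does with $[mK]+1$); since $r_{m,n'}\leq r_{m,n}$ for $n'\geq n$, this changes nothing in your estimates.
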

\begin{proof}
	By Theorem \ref{stru} with $p=1$, there exist $m\in \mathbb{N}$ and a sequence $\{f_\alpha\}_{\alpha\in\N_0^N} \subset \left(\oplus L^\infty(\mathbb{R}^N,\exp(-m\omega(x))\,dx)\right)_{\omega,m,\infty}$ such that $T=\sum_{\alpha\in\mathbb{N}^N_0} \partial^\alpha f_\alpha$. Moreover, by Lemma \ref{cinf} the function $T \star f \in C^\infty(\mathbb{R}^N)$. In particular, we have for every $x\in\mathbb{R}^N$ and $\beta \in \mathbb{N}^N_0$ that 
	\begin{align*}
	\partial^\beta (T\star f)(x) & =(T\star \partial^\beta f)(x)=\sum_{\alpha\in\mathbb{N}^N_0} (\partial^\alpha f_\alpha \star \partial^\beta f)(x)\\
	&= \sum_{\alpha\in\mathbb{N}^N_0} \int_{\mathbb{R}^N} (-1)^{|\beta|} f_\alpha(y)\partial^{\alpha+\beta} f(x-y)\, dy
	\end{align*}
	and so
	\begin{align*}
	|\partial^\beta (T\star f)(x)| &\leq \sum_{\alpha\in\mathbb{N}^N_0} \int_{\mathbb{R}^N} |f_\alpha(y)\partial^{\alpha+\beta} f(x-y)|\, dy  \\ & \leq \sum_{\alpha\in\mathbb{N}^N_0} \|\exp(-m\omega)f_\alpha\|_\infty \|\exp(m\omega)\partial^{\alpha+\beta} \tau_x f\|_1.
	\end{align*}
	On the other hand, we have for every $x\in\R^N$ and $\alpha,\beta\in\N_0^N$ that
	\begin{align*}
&	\|\exp(m\omega)\partial^{\alpha+\beta} \tau_x f\|_1   =\int_{\mathbb{R}^N} \exp(m\omega(y))|\partial^{\alpha+\beta} f(x-y)|\, dy\\
&=\int_{\mathbb{R}^N} \exp(m\omega(x-z))|\partial^{\alpha+\beta} f(z)|\, dz \leq \int_{\mathbb{R}^N} \exp(mK(1+\omega(x)+\omega(z)))|\partial^{\alpha+\beta} f(z)|\, dz\\&=\exp(mK(1+\omega(x)))\int_{\mathbb{R}^N} \exp(mK\omega(z))|\partial^{\alpha+\beta} f(z)|\, dz\\&=\exp(mK(1+\omega(x)))\|\exp(mK\omega)\partial^{\alpha+\beta} f\|_1 \leq \exp(n(1+\omega(x)))\|\exp(n\omega)\partial^{\alpha+\beta} f\|_1,
	\end{align*}
	where $n:=[mK]+1$. Accordingly, we get for every $x\in\mathbb{R}^N$ and $\beta \in \mathbb{N}^N_0$ that 
	\begin{align*}
	\exp(-n\omega(x))|\partial^\beta (T\star f)(x)|&\leq \exp(n)\sum_{\alpha\in\mathbb{N}^N_0} \|\exp(-m\omega)f_\alpha\|_\infty \|\exp(n\omega)\partial^{\alpha+\beta} f\|_1\\&\leq \exp(n)\underset{\alpha\in\mathbb{N}^N_0}{\sup}\,\|\exp(-m\omega)f_\alpha\|_\infty\exp\left(m\varphi^*_\omega\left(\frac{|\alpha|}{m}\right)\right)\times\\&\quad\times\sum_{\alpha\in\mathbb{N}^N_0} \|\exp(n\omega)\partial^{\alpha+\beta} f\|_1\exp\left(-m\varphi^*_\omega\left(\frac{|\alpha|}{m}\right)\right).
	\end{align*}
Let $C:=\exp(n)\underset{\alpha\in\mathbb{N}^N_0}{\sup}\,\|\exp(-m\omega)f_\alpha\|_\infty\exp\left(m\varphi^*_\omega\left(\frac{|\alpha|}{m}\right)\right)<\infty$. Then it follows, 	thanks to \eqref{secondprop}, for every $k\geq m$, $x\in\mathbb{R}^N$ and $\beta \in \mathbb{N}^N_0$ that 
	\begin{align*}
	&\exp(-n\omega(x))|\partial^\beta (T\star f)(x)|\exp\left(-k\varphi^*_\omega\left(\frac{|\beta|}{k}\right)\right)\leq\\&\quad\leq C\sum_{\alpha\in\mathbb{N}^N_0} \|\exp(n\omega)\partial^{\alpha+\beta} f\|_1\exp\left(-m\varphi^*_\omega\left(\frac{|\alpha|}{m}\right)\right)\exp\left(-k\varphi^*_\omega\left(\frac{|\beta|}{k}\right)\right)\\&\quad\leq C\sum_{\alpha\in\mathbb{N}^N_0} \|\exp(n\omega)\partial^{\alpha+\beta} f\|_1\exp\left(-k\varphi^*_\omega\left(\frac{|\alpha|}{k}\right)\right)\exp\left(-k\varphi^*_\omega\left(\frac{|\beta|}{k}\right)\right)\\&\quad\leq C\sum_{\alpha\in\mathbb{N}^N_0} \|\exp(n\omega)\partial^{\alpha+\beta} f\|_1\exp\left(-2k\varphi^*_\omega\left(\frac{|\alpha+\beta|}{2k}\right)\right)=C\sigma_{2k,n,1}(f).
	\end{align*}
 So,
	\begin{equation*}
	r_{k,n}(T\star f)\leq C\sigma_{2k,n,1}(f).
	\end{equation*}
	If $k\leq m$, with a similar argument we obtain  for every $x\in\mathbb{R}^N$ and $\beta \in \mathbb{N}^N_0$ that
	\begin{align*}
	&\exp(-n\omega(x))|\partial^\beta (T\star f)(x)|\exp\left(-k\varphi^*_\omega\left(\frac{|\beta|}{k}\right)\right)\leq C\sigma_{2m,n,1}(f).
	\end{align*}
	This shows that $T\star f\in\bigcap_{k=1}^\infty \mathcal{O}^k_{n,\omega}(\mathbb{R}^N)\subset \mathcal{O}_{C,\omega}(\mathbb{R}^N)$ and that the map $f\mapsto T \star f$ is a continuous linear operator from $\mathcal{S}_\omega(\mathbb{R}^N)$ in $\bigcap_{k=1}^\infty \mathcal{O}^k_{n,\omega}(\mathbb{R}^N)$, and so in $\mathcal{O}_{C,\omega}(\mathbb{R}^N)$.
\end{proof}

We can now prove that $\mathcal{O}'_{C,\omega}(\mathbb{R}^N)$ is the space of convolutors of $\mathcal{S}_{\omega}(\mathbb{R}^N)$.

\begin{thm}\label{T.conv}
	Let $\omega$ be a non-quasianalytic weight function and $T\in \cS'_\omega(\R^N)$. 
	Consider the following properties.
	\begin{enumerate} 
		\item $T\in \cO'_{C,\omega}(\mathbb{R}^N)$. 
		\item For every $f\in \mathcal{S}_\omega(\mathbb{R}^N)$ we have $T\star f \in \mathcal{S}_\omega(\mathbb{R}^N)$.  \end{enumerate}
	Then $(1)\Rightarrow (2)$. If, in addition, the weight function $\omega$ satisfies the additional condition $\log (1+t)=\omega(t)$ as $t\to\infty$, then $(2)\Rightarrow (1)$.
	
	Moreover, if $T\in \cO'_{C,\omega}(\mathbb{R}^N)$, then the linear operator  $C_T\colon \mathcal{S}_\omega(\mathbb{R}^N)\to  \mathcal{S}_\omega(\mathbb{R}^N)$ defined by $C_T(f):=T\star f$, for $f\in \mathcal{S}_\omega(\mathbb{R}^N)$, is continuous.
\end{thm}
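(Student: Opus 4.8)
The plan is to establish the two implications separately and to read off the continuity of $C_T$ from the quantitative estimates produced along the way. For $(1)\Rightarrow(2)$ I would mimic the proof of Proposition~\ref{conv}, but exploit the sharper information carried by membership in $\cO'_{C,\omega}(\R^N)$. Fix $T\in\cO'_{C,\omega}(\R^N)$ and $f\in\cS_\omega(\R^N)$. By Lemma~\ref{cinf} the function $T\star f$ is $C^\infty$ and $\partial^\beta(T\star f)=T\star\partial^\beta f$. Applying the structure theorem for $\cO'_{C,\omega}(\R^N)$ (Theorem~\ref{struoc}) with $p=1$, for each $n\in\N$ I obtain a representation $T=\sum_\alpha\partial^\alpha f_\alpha$ with $\sup_\alpha\|\exp(n\omega)f_\alpha\|_\infty\exp(m\varphi^*_\omega(|\alpha|/m))<\infty$ for some $m=m(n)$; the crucial point, absent in Proposition~\ref{conv}, is that here the decay weight $n$ is \emph{arbitrary}. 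Writing $\partial^\beta(T\star f)=\sum_\alpha(-1)^{|\alpha|}f_\alpha\star\partial^{\alpha+\beta}f$ and using $\exp(\mu\omega(x))\le e^{\mu K}\exp(\mu K\omega(y))\exp(\mu K\omega(x-y))$ via \eqref{sub}, I would estimate, for $n\ge\mu K$,
\[
\exp(\mu\omega(x))|\partial^\beta(T\star f)(x)|\le e^{\mu K}\sum_{\alpha\in\N_0^N}\|\exp(n\omega)f_\alpha\|_\infty\,\|\exp(\mu K\omega)\partial^{\alpha+\beta}f\|_1 ,
\]
the right-hand side being independent of $x$.

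Inserting the decay of the $f_\alpha$ and the seminorm bounds of $f\in\cS_\omega(\R^N)$, and summing over $\alpha$ exactly as in Proposition~\ref{conv} by means of \eqref{secondprop}, I would arrive at an estimate of the form $q_{k,\mu}(T\star f)\le C\,\sigma_{2k,n,1}(f)$ valid for all $k,\mu$. By Proposition~\ref{P.norme} this shows at once that $T\star f\in\cS_\omega(\R^N)$, and, since each generating seminorm of $T\star f$ is dominated by a seminorm of $f$, that $C_T\colon\cS_\omega(\R^N)\to\cS_\omega(\R^N)$ is continuous; this also settles the final assertion of the theorem.

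For the converse $(2)\Rightarrow(1)$, which is the substantial part and where the strengthened hypothesis $\log(1+t)=o(\omega(t))$ is needed, I would route the argument through the convolution/structure machinery of Section~4. Assume $T\star g\in\cS_\omega(\R^N)$ for every $g\in\cS_\omega(\R^N)$. Since $\cD_\omega(\R^N)\su\cS_\omega(\R^N)$ and every element of $\cS_\omega(\R^N)$ lies in $L^{p'}(\R^N,\exp(\mu\omega(x))\,dx)$ for every $\mu>0$ (Proposition~\ref{P.norme}), it follows that $T\star\phi\in L^{p'}(\R^N,\exp(\mu\omega(x))\,dx)$ for every $\phi\in\cD_\omega(\R^N)$ and every $\mu>0$. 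Fixing any $p\in(1,\infty)$ (the space $\cO'_{C,\omega}(\R^N)$ being independent of $p$ by Corollary~\ref{C.Duali}) and fixing $\mu>0$, I would then apply Proposition~\ref{secondaimp} with exponent $p'$ to produce a strongly elliptic ultradifferential operator $G(D)$ of $\omega$-class and functions $f,g\in L^{p'}(\R^N,\exp(\mu\omega(x))\,dx)$ with $T=G(D)f+g$; this is precisely the step that requires $\log(1+t)=o(\omega(t))$.

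Finally I would convert this parametrix-type decomposition into membership in $\cO'_{C,\omega}(\R^N)$. Using the inclusion $L^{p'}(\R^N,\exp(\mu\omega(x))\,dx)\subset\cD'_{L^p_{-\mu},\omega}(\R^N)$ (Hölder duality against $\cD_{L^p_{-\mu},\omega}(\R^N)$, as in the proof of Theorem~\ref{cardlpdual}) together with $G(D)\in\cL(\cD'_{L^p_{-\mu},\omega}(\R^N))$ (Proposition~\ref{dlpngd}), the identity $T=G(D)f+g$ yields $T\in\cD'_{L^p_{-\mu},\omega}(\R^N)$; as $\mu>0$ is arbitrary, $T\in\bigcap_{n\in\N}\cD'_{L^p_{-n},\omega}(\R^N)$. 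By Remark~\ref{R.Coincidenza} one has $\cO_{C,\omega}(\R^N)=\bigcup_{n}\cD_{L^p_{-n},\omega}(\R^N)$ as an (LF)-space, so a functional belongs to $\cO'_{C,\omega}(\R^N)$ exactly when it lies in $\cD'_{L^p_{-n},\omega}(\R^N)$ for every $n$; hence $T\in\cO'_{C,\omega}(\R^N)$. The main obstacle is this converse direction: the delicate point is the passage from the mere decay of the regularizations $T\star\phi$ to the global decomposition $T=G(D)f+g$ with rapidly decreasing $f,g$, which rests entirely on Proposition~\ref{secondaimp} and is the reason the strengthened non-quasianalyticity condition is indispensable there.
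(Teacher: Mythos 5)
Your proposal is correct and follows essentially the same route as the paper's own proof: for $(1)\Rightarrow(2)$ you combine the structure theorem (Theorem \ref{struoc}), whose crucial feature is the arbitrary decay weight $n$, with the convolution estimates of Proposition \ref{conv} (you take the $p=1$/$L^\infty$ form of the representation where the paper takes $p\in(1,\infty)$ and H\"older on the sum, an immaterial difference), obtaining the bound $q_{k,\mu}(T\star f)\leq C\,\sigma_{2k,n,1}(f)$ that also gives the continuity of $C_T$. For $(2)\Rightarrow(1)$ your chain --- regularization against $\cD_\omega(\R^N)$, weighted $L^{p'}$ membership, Proposition \ref{secondaimp} to get $T=G(D)f+g$, continuity of $G(D)$ on $\cD'_{L^p_{-\mu},\omega}(\R^N)$ (Proposition \ref{dlpngd}), and the identification of $\cO'_{C,\omega}(\R^N)$ with $\bigcap_{n}\cD'_{L^p_{-n},\omega}(\R^N)$ via Remark \ref{R.Coincidenza} --- is exactly the paper's argument, which phrases the middle step as Theorem \ref{cardlpdual}$(2)\Rightarrow(3)$ together with Proposition \ref{ocgd}.
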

\begin{proof}
	$(1)\Rightarrow(2)$. Fix $f\in \cS_\omega(\R^N)$. Since $\mathcal{O}'_{C,\omega}(\mathbb{R}^N) \subset \mathcal{S}'_{\omega}(\mathbb{R}^N)$, the function $T\star f\in \mathcal{O}_{C,\omega}(\mathbb{R}^N)$ by Proposition \ref{conv}. We show that $T\star f\in \mathcal{S}_{\omega}(\mathbb{R}^N)$.

Fix any $p\in (1,\infty)$.	By Theorem \ref{struoc}, for every $n\in\mathbb{N}$ there exists $m\in\mathbb{N}$ such that
	\begin{equation}
	T=\sum_{\alpha\in\mathbb{N}^N_0} \partial^\alpha f_\alpha
	\end{equation}
	with $\{f_\alpha\}_{\alpha\in\N_0^N} \subset \left(\oplus L^{p}(\mathbb{R}^N,\exp(n\omega(x))\,dx)\right)_{\omega,m,p}$. So, for fixed $\beta\in\mathbb{N}^N_0$ and $x\in\mathbb{R}^N$, we have that
	\begin{align*}
	|\partial^\beta (T\star f)(x)| &\leq\sum_{\alpha\in\mathbb{N}^N_0} \int_{\mathbb{R}^N} |f_\alpha(y)\partial^{\alpha+\beta} f(x-y)|\, dy\\
	&\leq \sum_{\alpha\in\mathbb{N}^N_0} \|\exp(n\omega)f_\alpha\|_p \|\exp(-n\omega)\partial^{\alpha+\beta} \tau_x f\|_{p'},
	\end{align*}
where
	\begin{align*}
	\|\exp(-n\omega)\partial^{\alpha+\beta} \tau_x f\|_{p'}^{p'}&=\int_{\mathbb{R}^N} \exp(-np'\omega(y))|\partial^{\alpha+\beta} f(x-y)|\, dy\\&=\int_{\mathbb{R}^N} \exp(-np'\omega(x-z))|\partial^{\alpha+\beta} f(z)|\, dz \\&\leq \int_{\mathbb{R}^N} \exp\left(np'\left(1+\omega(z)-\frac{\omega(x)}{K}\right)\right)|\partial^{\alpha+\beta} f(z)|\, dz\\&\leq C^{p'}\exp\left(-\frac{np'\omega(x)}{K}\right)\|\exp(n\omega)\partial^{\alpha+\beta} f\|_{p'}^{p'},
	\end{align*}
	where $C:=\exp(n)$. Therefore, we get for every $x\in\mathbb{R}^N$ and $\beta \in \mathbb{N}^N_0$ that
	\begin{align*}
	\exp\left(\frac{n\omega(x)}{K}\right)|\partial^\beta (T\star f)(x)|&\leq C\sum_{\alpha\in\mathbb{N}^N_0} \|\exp(n\omega)f_\alpha\|_p \|\exp(n\omega)\partial^{\alpha+\beta} f\|_{p'}\\&\leq C\left(\sum_{\alpha\in\mathbb{N}^N_0} \|\exp(n\omega)f_\alpha\|_p^p\exp\left(pm\varphi^*_\omega\left(\frac{|\alpha|}{m}\right)\right)\right)^{\frac{1}{p}}\times\\
&\quad	\times \left(\sum_{\alpha\in\mathbb{N}^N_0} \|\exp(n\omega)\partial^{\alpha+\beta} f\|_{p'}^{p'}\exp\left(-p'm\varphi^*_\omega\left(\frac{|\alpha|}{m}\right)\right)\right)^{\frac{1}{p'}}\\
&=C'\left(\sum_{\alpha\in\mathbb{N}^N_0} \|\exp(n\omega)\partial^{\alpha+\beta} f\|_{p'}^{p'}\exp\left(-p'm\varphi^*_\omega\left(\frac{|\alpha|}{m}\right)\right)\right)^{\frac{1}{p'}},
	\end{align*}
	where $C':=C\left(\sum_{\alpha\in\mathbb{N}^N_0} \|\exp(n\omega)f_\alpha\|_p^p\exp\left(pm\varphi^*_\omega\left(\frac{|\alpha|}{m}\right)\right)\right)^{\frac{1}{p}}<\infty$. If $n>m$, it follows  for each $x\in\mathbb{R}^N$ and $\beta \in \mathbb{N}^N_0$ that
	\begin{align*}
	\exp\left(\frac{n\omega(x)}{K}\right)|\partial^\beta (T\star f)(x)|&\leq C'\left(\sum_{\alpha\in\mathbb{N}^N_0} \|\exp(n\omega)\partial^{\alpha+\beta} f\|_{p'}^{p'}\exp\left(-p'm\varphi^*_\omega\left(\frac{|\alpha|}{m}\right)\right)\right)^{\frac{1}{p'}} \\&\leq C'\left(\sum_{\alpha\in\mathbb{N}^N_0} \|\exp(n\omega)\partial^{\alpha+\beta} f\|_{p'}^{p'}\exp\left(-p'n\varphi^*_\omega\left(\frac{|\alpha|}{n}\right)\right)\right)^{\frac{1}{p'}}
	\end{align*}
	and so
	\begin{align*}
	&q_{\frac{n}{K},n}(T\star f)=\underset{\beta \in \mathbb{N}^N_0}{\sup}\,\underset{x \in \mathbb{R}^N}{\sup}\, \exp\left(-n\varphi^*_\omega\left(\frac{|\beta|}{n}\right)\right)\exp\left(\frac{n\omega(x)}{K}\right) |\partial^\beta (T\star f)(x)|\\&\quad\leq C'\left(\sum_{\alpha\in\mathbb{N}^N_0} \|\exp(n\omega)\partial^{\alpha+\beta} f\|_{p'}^{p'}\exp\left(-p'n\varphi^*_\omega\left(\frac{|\alpha|}{n}\right)\right)\exp\left(-p'n\varphi^*_\omega\left(\frac{|\beta|}{n}\right)\right)\right)^{\frac{1}{p'}}\\&\quad\leq C'\left(\sum_{\alpha\in\mathbb{N}^N_0} \|\exp(n\omega)\partial^{\alpha+\beta} f\|_{p'}^{p'}\exp\left(-2np'\varphi^*_\omega\left(\frac{|\alpha+\beta|}{2n}\right)\right)\right)^{\frac{1}{p'}}=\sigma_{2n,n,p'}(f).
	\end{align*}
	If $n\leq m$, procedding in a similar way  we obtain for every $x\in\mathbb{R}^N$ and $\beta \in \mathbb{N}^N_0$ that 
	\begin{align*}
	&q_{\frac{n}{K},n}(T\star f)\leq\sigma_{2m,n,p'}(f).
	\end{align*}
	From the arbitrarity of $n$, we conclude that $T\star f\in \mathcal{S}_{\omega}(\mathbb{R}^N)$. This shows also the continuity of the operator $C_T$.
	
	$(2)\Rightarrow(1)$. The assumption implies that   $T\star\phi \in \cS_\omega(\R^N)$ for every $\phi\in\cD_\omega(\R^N)$. For a fixed $p\in [1,\infty)$, we have by Remark \ref{R.Coincidenza} that  $\cS_{\omega}(\mathbb{R}^N)= \bigcap_{n=1}^\infty \cD_{L^p_n,\omega}(\mathbb{R}^N)$ and so,   $T\star\phi \in \cD_{L^p_n,\omega}(\mathbb{R}^N)$ for every $\phi\in\cD_\omega(\R^N)$ and $n\in\N$.  This yields by Remark \ref{inlp} that $T\star\phi \in L^p(\mathbb{R}^N,\exp(n\omega(x))dx)$ for every $\phi\in\cD_\omega(\R^N)$ and $n\in\N$. Therefore, by Theorem \ref{cardlpdual}$(2)\Rightarrow (3)$ and Proposition \ref{ocgd}, we get for every $n\in\N$  that there exist  an elliptic ultradifferentiable operator $G(D)$ of $\omega$-class and $f,g\in L^p(\mathbb{R}^N,\exp(n\omega(x))dx) $ such that $T=G(D)f+g$. So,  $T\in \cD'_{L^{p'}_{-n},\omega}(\R^N)$ for every $n\in\N$, i.e., $T\in \cO'_{C,\omega}(\R^N)$.
%	
	%Finally, since $\cD_\omega(\R^N)$ is continuously and densely contained in $\cS_\omega(\R^N)$, it is easy to see that $T\star\phi \in \cS_\omega(\R^N)$ for every $\phi\in\cD_\omega(\R^N)$ if and only if $T\star\phi \in \cS_\omega(\R^N)$ for every $\phi\in\cS_\omega(\R^N)$.
	 This completes the proof.
\end{proof}
 
Finally, we show that $\cO'_{C,\omega}(\R^N)$ is the space of convolutors of $\cS'_\omega(\R^N)$. To this end, we observe what follows.

\begin{defn}
	Let $\omega$ be a non-quasianalytic weight function. If  $T \in \cO'_{C,\omega}(\R^N)$ and $S\in \mathcal{S}'_\omega(\mathbb{R}^N)$, we define the convolution    $T\star S$ by
	\begin{equation}
	\langle  T\star S, f\rangle:= \langle S, \check{T}\star f\rangle
	\end{equation}
	for every $ f\in \mathcal{S}_\omega(\mathbb{R}^N)$ (recall that $\check{T}$ is the distribution defined by $\varphi\mapsto \langle \check{T},\varphi\rangle=\langle T,\check{\varphi}\rangle$).
\end{defn}
\begin{rem}\label{ocsw}
	The definition is well-placed. Indeed,  by Theorem \ref{T.conv} the convolution $\check{T}\star f$ belongs to $\cS_\omega(\mathbb{R}^N)$ whenever $f\in\cS_\omega(\R^N)$.   Furthermore,  the linear operator  $f\in\cS_\omega(\R^N)\mapsto\check{T}\star f\in\cS_\omega(\R^N)$ is continuous. So, as  $S$ is a continuous linear form on $\cS_\omega(\mathbb{R}^N)$, the map $f\mapsto \langle S, \check{T}\star f\rangle$ is continuous on $\mathcal{S}_\omega(\mathbb{R}^N)$, i.e., $T\star S \in \mathcal{S}'_\omega(\mathbb{R}^N)$.
\end{rem}

\begin{prop}Let $\omega$ be a non-quasianalytic weight function and  $T\in \cO'_{C,\omega}(\mathbb{R}^N)$.  Then 
		 $T\star S \in \mathcal{S}'_\omega(\mathbb{R}^N)$ for every $S\in \mathcal{S}'_\omega(\mathbb{R}^N)$.
	
	Moreover, the linear operator  $\cC_T\colon \mathcal{S}'_\omega(\mathbb{R}^N)\to  \mathcal{S}'_\omega(\mathbb{R}^N)$ defined by $\cC_T(S):=T\star S$, for $S\in \mathcal{S}'_\omega(\mathbb{R}^N)$, is continuous.
\end{prop}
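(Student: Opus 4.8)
The plan is to reduce everything to the transpose of the continuous convolution operator already produced in Theorem \ref{T.conv}. The fact that $T\star S\in\cS'_\omega(\R^N)$ for every $S\in\cS'_\omega(\R^N)$ is exactly the content of Remark \ref{ocsw}, so the genuine task is the continuity of $\cC_T$ for the strong topologies. First I would record that the reflection $\varphi\mapsto\check\varphi$ is a topological isomorphism of $\cO_{C,\omega}(\R^N)$ onto itself: it preserves each defining seminorm $r_{m,n}$, since $\omega(|-x|)=\omega(|x|)$ and differentiation commutes with the reflection up to a sign. Hence $\check T\in\cO'_{C,\omega}(\R^N)$ whenever $T\in\cO'_{C,\omega}(\R^N)$, and the ``Moreover'' part of Theorem \ref{T.conv}, applied to $\check T$, guarantees that the operator $C_{\check T}\colon\cS_\omega(\R^N)\to\cS_\omega(\R^N)$, $f\mapsto\check T\star f$, is a continuous linear operator.

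The key observation is then that $\cC_T$ is nothing but the transpose $(C_{\check T})'$ of $C_{\check T}$. Indeed, by the very definition of the convolution,
\begin{equation*}
\langle\cC_T(S),f\rangle=\langle T\star S,f\rangle=\langle S,\check T\star f\rangle=\langle S,C_{\check T}(f)\rangle,\qquad f\in\cS_\omega(\R^N),
\end{equation*}
which is precisely $\langle(C_{\check T})'(S),f\rangle$.

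To obtain the continuity I would argue directly with the strong-topology seminorms $p_B(S):=\sup_{f\in B}|\langle S,f\rangle|$, where $B$ ranges over the bounded subsets of $\cS_\omega(\R^N)$. Fixing such a $B$ and using the identification above,
\begin{equation*}
p_B(\cC_T(S))=\sup_{f\in B}|\langle S,\check T\star f\rangle|=\sup_{g\in C_{\check T}(B)}|\langle S,g\rangle|=p_{C_{\check T}(B)}(S).
\end{equation*}
Since $C_{\check T}$ is continuous and $B$ is bounded, the image $C_{\check T}(B)$ is a bounded subset of $\cS_\omega(\R^N)$, so $p_{C_{\check T}(B)}$ is one of the seminorms defining the strong topology of $\cS'_\omega(\R^N)$. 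Thus $p_B\circ\cC_T=p_{C_{\check T}(B)}$ is continuous for every bounded $B$, whence $\cC_T$ is continuous.

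I do not expect a real obstacle here: the whole weight of the argument sits in Theorem \ref{T.conv}, which already supplies the continuity of $f\mapsto\check T\star f$ on $\cS_\omega(\R^N)$. The only points requiring a word of care are the reflection invariance of $\cO_{C,\omega}(\R^N)$, so that $\check T$ again lies in $\cO'_{C,\omega}(\R^N)$, and the elementary remark that a continuous linear map carries bounded sets to bounded sets, which is exactly what makes the transpose continuous for the strong topologies and not merely for the weak-$*$ ones.
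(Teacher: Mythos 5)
Your proof is correct and follows essentially the same route as the paper: both identify $\cC_T$ as the transpose of the continuous convolution operator $f\mapsto\check{T}\star f$ on $\cS_\omega(\R^N)$ furnished by Theorem \ref{T.conv}, and conclude by the continuity of transposes for the strong dual topologies. You are in fact slightly more careful than the paper's own two-line argument, which leaves implicit both the reflection invariance ensuring $\check{T}\in\cO'_{C,\omega}(\R^N)$ and the bounded-set computation $p_B\circ\cC_T=p_{C_{\check{T}}(B)}$ that you spell out.
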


\begin{proof} The proof of the first statement follows from Remark \ref{ocsw}.

We now observe that   for every $S\in \mathcal{S}'_\omega(\mathbb{R}^N)$ and $f\in \cS_\omega(\R^N)$ 
\begin{equation*}
		\langle T\star S, f\rangle =\langle S, \check{T}\star f\rangle.
\end{equation*}
This means that the linear operator $\cC_T\colon \cS'_\omega(\R^N)\to \cS'_\omega(\R^N)$ is the transpose of the continuous linear operator $C_T\colon \cS_\omega(\R^N)\to \cS_\omega(\R^N)$. Therefore, the linear operator  $\cC_T\colon \cS'_\omega(\R^N)\to \cS'_\omega(\R^N)$ is continuous too.
\end{proof}

\section{The action of the Fourier Transform in the multiplier and convolutor spaces}

In this final section we study the action of the Fourier transform on the spaces $\mathcal{O}_{M,\omega}(\mathbb{R}^N)$ and $\mathcal{O}'_{C,\omega}(\mathbb{R}^N)$ following the approach in \cite{Ki}. To this end, we recall that  the spaces $\cO_{M,\omega}(\R^N)$ and $\cO'_{C,\omega}(\R^N)$ are both continuously included in $\cS'_\omega(\R^N)$ (see 
 Theorem \ref{T.incl} and Proposition \ref{P.duali-Inc}).
We also recall that 
the gaussian function
$
f(x):=\exp\left(-\frac{|x|^2}{2}\right)
$, for $x\in\R^N$,
belongs to $\mathcal{S}_\omega(\mathbb{R}^N)$. 

\begin{thm}\label{omoc} Let $\omega$ be a non-quasianalytic weight function with the additional condition $\log(1+t)=o(\omega(t))$ as $t\to \infty$. Then 
	the Fourier transform $\mathcal{F}$ maps the space $\mathcal{O}'_{C,\omega}(\mathbb{R}^N)$ isomorphically onto the space $\mathcal{O}_{M,\omega}(\mathbb{R}^N)$. Furthermore, for $T\in \mathcal{O}'_{C,\omega}(\mathbb{R}^N)$ and $S\in \mathcal{S}'_{\omega}(\mathbb{R}^N)$, we have
	\begin{equation}\label{cm1}
	\mathcal{F}(T\star S)= \mathcal{F}(T)\mathcal{F}(S),
	\end{equation}
	and if $f \in \mathcal{O}_{M,\omega}(\mathbb{R}^N)$ and $T\in \mathcal{S}'_{\omega}(\mathbb{R}^N)$ we have
	\begin{equation}\label{cm2}
	\mathcal{F}(fT)= (2\pi)^{-N}\hat{f}\star\mathcal{F}(T).
	\end{equation}	
\end{thm}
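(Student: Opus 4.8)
The plan is to reduce the whole statement to the exchange formula \eqref{trf}, $\mathcal{F}(S\star f)=\hat f\,\mathcal{F}(S)$ (valid for $S\in\cS'_\omega(\R^N)$, $f\in\cS_\omega(\R^N)$), combined with the two characterisations already available: $\cO_{M,\omega}(\R^N)$ as the multiplier space (Theorem \ref{T.Multiplier}) and $\cO'_{C,\omega}(\R^N)$ as the convolutor space (Theorem \ref{T.conv}). The first and main step is the bijection. For any $T\in\cS'_\omega(\R^N)$, formula \eqref{trf} reads $\mathcal{F}(T\star f)=\mathcal{F}(T)\,\hat f$ for every $f\in\cS_\omega(\R^N)$; since $\mathcal{F}$ is an automorphism of $\cS_\omega(\R^N)$ (Remark \ref{in s}(4)), letting $g=\hat f$ range over $\cS_\omega(\R^N)$ yields the operator identity $\mathcal{F}\circ C_T\circ\mathcal{F}^{-1}=M_{\mathcal{F}(T)}$ (a priori as maps $\cS_\omega(\R^N)\to\cS'_\omega(\R^N)$). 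Hence the convolution operator $C_T$ maps $\cS_\omega(\R^N)$ into itself if and only if multiplication by $\mathcal{F}(T)$ does, and by Theorems \ref{T.Multiplier} and \ref{T.conv} (the direction $(2)\Rightarrow(1)$ of the latter being where the extra hypothesis $\log(1+t)=o(\omega(t))$ is used) this says exactly that $T\in\cO'_{C,\omega}(\R^N)$ if and only if $\mathcal{F}(T)\in\cO_{M,\omega}(\R^N)$.

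There is one delicate point: before invoking the multiplier characterisation one must know that $\mathcal{F}(T)$ is a genuine $C^\infty$ function, not merely an element of $\cS'_\omega(\R^N)$. I would obtain this by localisation. For $T\in\cO'_{C,\omega}(\R^N)$ the identity above gives $g\,\mathcal{F}(T)=\mathcal{F}(T\star\mathcal{F}^{-1}g)\in\cS_\omega(\R^N)$ for every $g\in\cS_\omega(\R^N)$; choosing $g=\phi\in\cD_\omega(\R^N)$ with $\phi\equiv1$ near an arbitrary point shows that $\mathcal{F}(T)$ agrees locally with a function in $\cS_\omega(\R^N)$, so $\mathcal{F}(T)\in C^\infty(\R^N)$, and then Theorem \ref{T.Multiplier}, $(3)\Rightarrow(1)$, places $\mathcal{F}(T)$ in $\cO_{M,\omega}(\R^N)$. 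The reverse inclusion is direct: given $f\in\cO_{M,\omega}(\R^N)\su\cS'_\omega(\R^N)$ (Proposition \ref{P.duali-Inc}), set $T=\mathcal{F}^{-1}(f)$; then $\mathcal{F}(T\star h)=\hat h\,f\in\cS_\omega(\R^N)$ for all $h\in\cS_\omega(\R^N)$ because $f$ is a multiplier, whence $T\star h\in\cS_\omega(\R^N)$ and $T\in\cO'_{C,\omega}(\R^N)$ by Theorem \ref{T.conv}. Thus $\mathcal{F}$ is a linear bijection of $\cO'_{C,\omega}(\R^N)$ onto $\cO_{M,\omega}(\R^N)$.

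It remains to prove the two formulas. For \eqref{cm1} I fix $f\in\cS_\omega(\R^N)$ and unwind both sides: using the definition $\langle T\star S,\hat f\rangle=\langle S,\check T\star\hat f\rangle$, the left side is $\langle S,\check T\star\hat f\rangle$, while $\langle\mathcal{F}(T)\mathcal{F}(S),f\rangle=\langle\mathcal{F}(S),\mathcal{F}(T)f\rangle=\langle S,\mathcal{F}(\mathcal{F}(T)f)\rangle$ (legitimate since $\mathcal{F}(T)\in\cO_{M,\omega}(\R^N)$ is a multiplier, so $\mathcal{F}(T)f\in\cS_\omega(\R^N)$). Hence \eqref{cm1} reduces to the functional identity $\check T\star\hat f=\mathcal{F}(\mathcal{F}(T)f)$ in $\cS_\omega(\R^N)$, which I verify by applying $\mathcal{F}$ to both sides: \eqref{trf} turns the left side into $\widehat{\hat f}\,\mathcal{F}(\check T)$ and, using $\mathcal{F}(\check T)=\check{\mathcal{F}(T)}$ together with the inversion relation $\mathcal{F}^2 h=(2\pi)^N\check h$, both sides become $(2\pi)^N(\mathcal{F}(T)f)\check{\ }$, so injectivity of $\mathcal{F}$ closes the argument. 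Formula \eqref{cm2} is handled identically by testing against $g\in\cS_\omega(\R^N)$: note first that $\hat f\in\cO'_{C,\omega}(\R^N)$ (because $f\in\cO_{M,\omega}(\R^N)=\mathcal{F}(\cO'_{C,\omega}(\R^N))$, and $\cO'_{C,\omega}(\R^N)$ is reflection invariant), so the convolution $\hat f\star\mathcal{F}(T)$ of Section~5 is well defined; then $\langle\mathcal{F}(fT),g\rangle=\langle T,f\hat g\rangle$ and $(2\pi)^{-N}\langle\hat f\star\mathcal{F}(T),g\rangle=(2\pi)^{-N}\langle T,\mathcal{F}(\check{\hat f}\star g)\rangle$, and \eqref{trf} together with $\mathcal{F}^2=(2\pi)^N\,\check{\ }$ gives $\mathcal{F}(\check{\hat f}\star g)=(2\pi)^N f\hat g$, yielding the equality.

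Finally, continuity of $\mathcal{F}$ and of $\mathcal{F}^{-1}$ between the two spaces follows from the continuity of $\mathcal{F},\mathcal{F}^{-1}$ on $\cS'_\omega(\R^N)$ (Remark \ref{in s}(4)) and of the inclusions $\cO_{M,\omega}(\R^N),\cO'_{C,\omega}(\R^N)\hookrightarrow\cS'_\omega(\R^N)$ (Theorem \ref{T.incl}, Proposition \ref{P.duali-Inc}); the sharper assertion that this is a topological isomorphism for the strong operator topology is the refinement to be pursued afterwards. I expect the localisation step guaranteeing $\mathcal{F}(T)\in C^\infty(\R^N)$ to be the only genuinely non-formal obstacle, the rest being a careful but routine exchange-formula computation in which the sole bookkeeping is the tracking of the reflections $\check{\ }$ and the constant $(2\pi)^{-N}$.
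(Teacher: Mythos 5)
Your proposal is correct, and its overall architecture coincides with the paper's: everything is reduced to the exchange formula \eqref{trf} together with the characterisation of $\cO_{M,\omega}(\R^N)$ as multipliers (Theorem \ref{T.Multiplier}) and of $\cO'_{C,\omega}(\R^N)$ as convolutors (Theorem \ref{T.conv}), with the extra hypothesis $\log(1+t)=o(\omega(t))$ entering exactly where you place it, namely in the implication $(2)\Rightarrow(1)$ of Theorem \ref{T.conv}. The one step where you genuinely diverge is the ``delicate point'' you yourself single out: proving that $\cF(T)$ is an actual $C^\infty$ function before the multiplier criterion can be invoked. You localise, choosing cutoffs $\phi\in\cD_\omega(\R^N)$ with $\phi\equiv 1$ near an arbitrary point, so that $\phi\,\cF(T)=\cF(T\star\cF^{-1}\phi)\in\cS_\omega(\R^N)$; this works, but it implicitly uses the sheaf property of $\cD'_\omega$ (an ultradistribution that is locally a smooth function is globally a smooth function), which in turn rests on the existence of $\omega$-partitions of unity, available precisely because $\omega$ is non-quasianalytic. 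The paper avoids localisation and gluing altogether by multiplying with the Gaussian: since $e^{-|x|^2/2}\in\cS_\omega(\R^N)$ is nowhere vanishing, \eqref{trf} and Theorem \ref{T.conv} give the single global identity $\cF(T)=e^{|x|^2/2}\,(2\pi)^{N/2}\,\cF\bigl(T\star e^{-|\cdot|^2/2}\bigr)$ with the last factor in $\cS_\omega(\R^N)$, so smoothness is immediate. For the rest, your treatment of \eqref{cm1} (reduction to the identity $\check T\star\hat f=\cF(\cF(T)f)$, verified by applying $\cF$ and using $\cF^2h=(2\pi)^N\check h$) is exactly the computation the paper compresses into ``thanks to formula \eqref{trf}'', your reverse inclusion via $T=\cF^{-1}(f)$ is a slightly more direct version of the paper's argument with $\hat f$, and you actually supply the duality proof of \eqref{cm2} that the paper omits as analogous.

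One caveat: your closing claim that continuity of $\cF$ and $\cF^{-1}$ ``between the two spaces follows from'' their continuity on $\cS'_\omega(\R^N)$ and the continuity of the inclusions is a non-sequitur; continuity into $\cS'_\omega(\R^N)$ says nothing about continuity into $\cO_{M,\omega}(\R^N)$ equipped with its finer natural topology $t$, nor about continuity from the strong dual $\cO'_{C,\omega}(\R^N)$. This does not damage your proof of the statement, because Theorem \ref{omoc}, as the paper proves it, is an algebraic isomorphism; the topological assertion (with the strong operator topology $\tau_b$ on the convolutor side) is the separate final theorem of the paper, which you correctly defer to a later step.
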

\begin{proof}
	Fix $T\in \mathcal{O}'_{C,\omega}(\mathbb{R}^N)$. Since $\mathcal{O}'_{C,\omega}(\mathbb{R}^N)\subset \mathcal{S}'_{\omega}(\mathbb{R}^N)$ and $\mathcal{F}$ is an automorphism into $\mathcal{S}'_{\omega}(\mathbb{R}^N)$, we have that $\mathcal{F}(T)\in\mathcal{S}'_{\omega}(\mathbb{R}^N)$. So, we have to prove that $\mathcal{F}(T)$ is an ultradistribution represented by a function belonging to $\mathcal{O}_{M,\omega}(\mathbb{R}^N)$. To do this, we observe that  by (\ref{trf})   the ultradistribution $\exp\left(-\frac{|x|^2}{2}\right)\mathcal{F}(T)\in \mathcal{S}'_{\omega}(\mathbb{R}^N)$ as the gaussian function belongs to $\mathcal{S}_{\omega}(\mathbb{R}^N)$, and it is equal to the ultradistribution $(2\pi)^{\frac{N}{2}}\mathcal{F}\left(T\star\exp\left(-\frac{|x|^2}{2}\right)\right)$. On the other hand,  from Theorem \ref{T.conv} it follows that $T\star \exp\left(-\frac{|x|^2}{2}\right) \in \mathcal{S}_{\omega}(\mathbb{R}^N)$. Since $\mathcal{F}$ is an automorphism into $\mathcal{S}_{\omega}(\mathbb{R}^N)$, then $(2\pi)^{\frac{N}{2}}\mathcal{F}\left(T\star\exp\left(-\frac{|x|^2}{2}\right)\right)\in\mathcal{S}_{\omega}(\mathbb{R}^N)$.Thus, the ultradistribution $\mathcal{F}(T)$ is represented by the function $\psi:=\exp\left(\frac{|x|^2}{2}\right)(2\pi)^{\frac{N}{2}}\mathcal{F}\left(T\star\exp\left(-\frac{|x|^2}{2}\right)\right)\in C^\infty(\mathbb{R}^N)$. But,  $\psi \in \mathcal{E}_{\omega}(\mathbb{R}^N)$ as it is easy to see. So, it remains to prove that the function $\psi$ belongs to $\mathcal{O}_{M,\omega}(\mathbb{R}^N)$, i.e.,  thanks to Theorem \ref{T.Multiplier} it suffices to show   that $f\psi \in \mathcal{S}_{\omega}(\mathbb{R}^N)$ for every $f\in\mathcal{S}_{\omega}(\mathbb{R}^N)$. To see this, we fix $f\in\mathcal{S}_{\omega}(\mathbb{R}^N)$ and notice that by formula (\ref{trf})
	\begin{equation*}
	f\psi=f\mathcal{F}(T)=\mathcal{F}(T\star \mathcal{F}^{-1}(f))\in\mathcal{S}_{\omega}(\mathbb{R}^N).
	\end{equation*}
	This shows that $\mathcal{F}(T)\in\cO_{M,\omega}(\mathbb{R}^N)$.

	In order to get that $\mathcal{F}$ is an isomorphism between $\mathcal{O}'_{C,\omega}(\mathbb{R}^N)$ and $\mathcal{O}_{M,\omega}(\mathbb{R}^N)$, it remains to prove that if $f\in\mathcal{O}_{M,\omega}(\mathbb{R}^N)$, then $\hat{f}\in\mathcal{O}'_{C,\omega}(\mathbb{R}^N)$. So, fixed $f\in\mathcal{O}_{M,\omega}(\mathbb{R}^N)$, we observe that $\mathcal{F}\hat{f} \in (\mathcal{F} \circ \mathcal{F}) (\mathcal{O}_{M,\omega}(\mathbb{R}^N))= (2\pi)^{N}(\mathcal{O}_{M,\omega}(\mathbb{R}^N))^\vee=\mathcal{O}_{M,\omega}(\mathbb{R}^N)$. By  Theorem \ref{T.Multiplier} it follows  for every $g\in \mathcal{S}_{\omega}(\mathbb{R}^N)$ that  $(\mathcal{F}\hat{f})\hat{g}\in \mathcal{S}_{\omega}(\mathbb{R}^N)$. But, taking into account that  $\hat{f}\in \cS'_\omega(\R^N)$ and that 
	\begin{align*}
	(\mathcal{F}\hat{f})\hat{g}\in \mathcal{S}_{\omega}(\mathbb{R}^N)\quad \forall g\in \mathcal{S}_{\omega}(\mathbb{R}^N) &\iff \mathcal{F}(\hat{f}\star g) \in \mathcal{S}_{\omega}(\mathbb{R}^N)\quad \forall g\in \mathcal{S}_{\omega}(\mathbb{R}^N)\\& \iff \hat{f}\star g \in \mathcal{S}_{\omega}(\mathbb{R}^N)\quad \forall g\in \mathcal{S}_{\omega}(\mathbb{R}^N),
	\end{align*}
  we obtain via  Theorem \ref{T.conv} that $\hat{f}\in \cO'_{C,\omega}(\R^N)$.

	We now prove formula $(\ref{cm1})$. Fixed $T\in \mathcal{O}'_{C,\omega}(\mathbb{R}^N)$, $S\in \mathcal{S}'_{\omega}(\mathbb{R}^N)$, $f\in \mathcal{S}_\omega(\mathbb{R}^N)$, we have
	\begin{align*}
		\langle \mathcal{F}(T\star S), f\rangle= \langle T\star S, \hat{f} \rangle=\langle S, \check{T}\star \hat{f}  \rangle, 
	\end{align*}
where $\hat{f}\in \mathcal{S}_\omega(\mathbb{R}^N)$ and  $\check{T}\star \hat{f}\in\cS_\omega(\R^N)$  by Theorem  \ref{T.conv}, and hence,  $\check{T}\star \hat{f}\in\mathcal{S}'_{\omega}(\mathbb{R}^N)$. Thanks to formula $(\ref{trf})$, we  obtain $\cF(\cF(T)f)=\check{T}\star\hat{f}$. So, we conclude
	\begin{align*}
		\langle \mathcal{F}(T\star S), f\rangle= \langle S, \cF(\cF(T)f)  \rangle= \langle \cF (S), \cF (T)f  \rangle=\langle \mathcal{F}(T)\cF (S), f  \rangle,
	\end{align*}
	where in the last equality we used the property that $\cF (T)\in \cO_{M,\omega}(\R^N)$.
	This shows that $(\ref{cm1})$ holds. The proof of formula $(\ref{cm2})$ is analogous and so it is omitted. 
\end{proof}
%\begin{rem}
%	We know that
%	\begin{equation*}
%	\mathcal{E}'_\omega(\mathbb{R}^N)\subset \mathcal{O}'_{M,\omega}(\mathbb{R}^N).
%	\end{equation*}
%	Then it holds that
%	\begin{equation*}
%	\mathcal{F}(\mathcal{E}'_\omega(\mathbb{R}^N))\subset \mathcal{F}(\mathcal{O}'_{M,\omega}(\mathbb{R}^N))=\mathcal{O}_{C,\omega}(\mathbb{R}^N)\subset \mathcal{O}_{M,\omega}(\mathbb{R}^N),
%	\end{equation*}
%	as in the classical case.
%\end{rem}

Let $X$ be a Hausdorff lc-space, $\Gamma_X$  a system of continuous seminorms generating the topology of $X$ and $\cB(X)$ denote the  collection of all bounded subsets of $X$. %Then the strong operator topology $\tau_s$ in the space $\cL(X)$ of all continuous linear operators from $X$ into itself is determined by the family of seminorms $q_x(S):=q(Sx)$ ($S\in \cL(X)$) for each $x\in X$ and $q\in \Gamma_X$. In such a case we write $\cL_s(X)$. Denoted by $\cB(X)$ the collection of all bounded subsets of $X$,
We recall that the topology $\tau_b$ of uniform convergence on bounded sets in the space $\cL(X)$ of all continuous linear operators from $X$ into itself is defined  by the seminorms $q_B(S):=\sup_{x\in B}q(Sx)$ ($S\in \cL(X)$) for each $B\in \cB(X)$ and $q\in \Gamma_X$. In such a case we write $\cL_b(X)$.

By Theorem \ref{T.Multiplier} the space $\cO_{M,\omega}(\R^N)$ can be identified with the space $\cM(\cS_{\omega}(\R^N))$ of all multipliers on $\cS_\omega(\R^N)$ via the map $M\colon \cO_{M,\omega}(\R^N)\to \cM(\cS_{\omega}(\R^N))$ defined by $M(f):=M_f$ for each $f\in \cO_{M,\omega}(\R^N)$. Since  $\cM(\cS_{\omega}(\R^N))$ is a subspace of $\cL(\cS_{\omega}(\R^N))$, the space $\cO_{M,\omega}(\R^N)$ (via the map $M$) can be then endowed with the topology $\tau_b$ induced by $\cL_b(\cS_{\omega}(\R^N))$. In view of \cite[Theorem 5.2]{De} we have that $t=\tau_b|_{\cO_{M,\omega}(\R^N)}$. Therefore, a net $\{f_i\}_{i\in I}\subset \cO_{M,\omega}(\R^N)$ $t$-converges to $f\in \cO_{M,\omega}(\R^N)$ if and only if $\sup_{g\in B}q_{m,n}((f_i-f)g)\to 0$ for every $m,n\in\N$ and $B\in \cB(\cS_\omega(\R^N))$. 

On the other hand, by Theorem \ref{T.conv} the space $\cO'_{C,\omega}(\R^N)$ can be  identified with the space $\cC(\cS_{\omega}(\R^N))$ of all convolutors on $\cS_\omega(\R^N)$ via the map $C\colon \cO'_{C,\omega}(\R^N)\to \cC(\cS_{\omega}(\R^N))$ defined by $C(T):=C_T$ for each $T\in \cO'_{C,\omega}(\R^N)$. Since  $\cC(\cS_{\omega}(\R^N))$ is a subspace of $\cL(\cS_{\omega}(\R^N))$, the space $\cO_{C,\omega}(\R^N)$ (via the map $C$) can be then endowed with the topology $\tau_b$ induced by $\cL_b(\cS_{\omega}(\R^N))$. Therefore, a net $\{T_i\}_{i\in I}\subset \cO'_{C,\omega}(\R^N)$ $\tau_b$-converges to $T\in \cO'_{C,\omega}(\R^N)$ if and only if $\sup_{g\in B}q_{m,n}((T_i-T)\star g )\to 0$ for every $m,n\in\N$ and $B\in \cB(\cS_\omega(\R^N))$.  Now, we can prove this continuity result.

\begin{thm}
Let $\omega$ be a non-quasianalytic weight function with the additional condition $\log(1+t)=o(\omega(t))$ as $t\to\infty$. Then the Fourier transform $\cF$ is a topological isomorphism from $(\cO'_{C,\omega}(\R^N), \tau_b)$ onto $\cO_{M,\omega}(\R^N)$.
\end{thm}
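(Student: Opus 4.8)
The plan is to note that the algebraic part of the statement---that $\cF$ is a linear bijection of $\cO'_{C,\omega}(\R^N)$ onto $\cO_{M,\omega}(\R^N)$---has already been proved in Theorem \ref{omoc}, so only the topological assertion remains. The starting observation is that both topologies in play are, by construction, induced from $\cL_b(\cS_\omega(\R^N))$: the topology $\tau_b$ on $\cO'_{C,\omega}(\R^N)$ is by definition the one making $C\colon T\mapsto C_T$ a topological isomorphism onto the convolutor space $\cC(\cS_\omega(\R^N))$, while on $\cO_{M,\omega}(\R^N)$ the natural topology $t$ coincides with the topology induced through $M\colon f\mapsto M_f$, since $t=\tau_b|_{\cO_{M,\omega}(\R^N)}$ by \cite[Theorem 5.2]{De}. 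Thus it suffices to transport one subspace topology to the other inside $\cL_b(\cS_\omega(\R^N))$.

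The bridge is the intertwining identity \eqref{trf}. For $T\in\cO'_{C,\omega}(\R^N)\su\cS'_\omega(\R^N)$ and $g\in\cS_\omega(\R^N)$ one has $\cF(T\star g)=\hat g\,\cF(T)=M_{\cF(T)}(\cF g)$, that is, $\cF\circ C_T=M_{\cF(T)}\circ\cF$ as operators on $\cS_\omega(\R^N)$. Introducing the conjugation
\[
\Phi\colon \cL_b(\cS_\omega(\R^N))\to\cL_b(\cS_\omega(\R^N)),\qquad \Phi(S):=\cF\circ S\circ\cF^{-1},
\]
this reads $\Phi(C_T)=M_{\cF(T)}$, i.e.\ $\Phi\circ C=M\circ\cF$. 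Since $\cF$ maps $\cO'_{C,\omega}(\R^N)$ onto $\cO_{M,\omega}(\R^N)$ by Theorem \ref{omoc}, I would record that $\Phi$ carries the range $\cC(\cS_\omega(\R^N))$ of $C$ exactly onto the range $\cM(\cS_\omega(\R^N))$ of $M$.

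The crux---and the step I expect to require the most care---is to check that $\Phi$ is a topological automorphism of $\cL_b(\cS_\omega(\R^N))$. This rests on Remark \ref{in s}(4): $\cF\colon\cS_\omega(\R^N)\to\cS_\omega(\R^N)$ is a topological isomorphism, so $\cF$ and $\cF^{-1}$ send bounded sets to bounded sets and, for each continuous seminorm $q$ on $\cS_\omega(\R^N)$, there are a seminorm $p$ and a constant $C>0$ with $q(\cF\psi)\le C\,p(\psi)$ for all $\psi$. Hence, for a bounded $B\su\cS_\omega(\R^N)$,
\[
q_B(\Phi(S))=\sup_{g\in B}q\bigl(\cF S\cF^{-1}g\bigr)\le C\sup_{g\in B}p\bigl(S\cF^{-1}g\bigr)=C\,p_{\cF^{-1}(B)}(S),
\]
with $\cF^{-1}(B)$ bounded, which gives continuity of $\Phi$; the same argument applied to $\cF^{-1}$ gives continuity of $\Phi^{-1}$. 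Therefore $\Phi$ restricts to a topological isomorphism of $\cC(\cS_\omega(\R^N))$ onto $\cM(\cS_\omega(\R^N))$ for the subspace topologies inherited from $\cL_b(\cS_\omega(\R^N))$.

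Finally I would conclude by composition: from $\Phi\circ C=M\circ\cF$ one gets $\cF=M^{-1}\circ\Phi\circ C$, where $C\colon(\cO'_{C,\omega}(\R^N),\tau_b)\to\cC(\cS_\omega(\R^N))$, the restriction $\Phi\colon\cC(\cS_\omega(\R^N))\to\cM(\cS_\omega(\R^N))$, and $M^{-1}\colon\cM(\cS_\omega(\R^N))\to(\cO_{M,\omega}(\R^N),t)$ are all topological isomorphisms; hence so is $\cF$, as claimed. As an alternative, more computational route, one could instead verify the two seminorm estimates directly on the net characterizations recalled before the statement, using $q_{m,n}\bigl((\cF T_i-\cF T)\hat g\bigr)=q_{m,n}\bigl(\cF((T_i-T)\star g)\bigr)$ together with the continuity of $\cF$ on $\cS_\omega(\R^N)$; this yields the same conclusion but concentrates the boundedness and seminorm bookkeeping at exactly the same point.
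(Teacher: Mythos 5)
Your proof is correct, and it runs on exactly the same fuel as the paper's: the exchange formula \eqref{trf}, the fact that $\cF|_{\cS_\omega(\R^N)}$ is a topological isomorphism (so $\cF^{\pm1}$ preserve bounded sets and pull continuous seminorms back to continuous seminorms), the definition of $\tau_b$ via the map $C$, and the identity $t=\tau_b|_{\cO_{M,\omega}(\R^N)}$ from \cite[Theorem 5.2]{De}. The difference is organizational, and in fact the paper's proof is precisely your ``alternative, more computational route'': it takes a $\tau_b$-convergent net $T_i\to T$, rewrites $\cF(T_i\star g)=\hat g\,\cF(T_i)$ by \eqref{trf}, observes that $B\su\cS_\omega(\R^N)$ is bounded if and only if $\{\hat g\colon g\in B\}$ is bounded, concludes that $\cF(T_i)\to\cF(T)$ in $\cO_{M,\omega}(\R^N)$, and asserts the converse direction ``in a similar way''. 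Your primary route --- factoring $\cF=M^{-1}\circ\Phi\circ C$ through the conjugation $\Phi(S)=\cF\circ S\circ\cF^{-1}$, proving that $\Phi$ is a topological automorphism of $\cL_b(\cS_\omega(\R^N))$ by a single seminorm estimate, and invoking the algebraic bijectivity of Theorem \ref{omoc} to get $\Phi(\cC(\cS_\omega(\R^N)))=\cM(\cS_\omega(\R^N))$ --- is a cleaner packaging of the same content: both the continuity of $\cF$ and of $\cF^{-1}$ come out at once (no separate ``similar'' argument), and the net bookkeeping collapses into the one estimate $q_B(\Phi(S))\le C\,p_{\cF^{-1}(B)}(S)$. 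The only point left implicit (in your write-up and in the paper alike) is the injectivity of the identification maps $C$ and $M$, which is needed for $M^{-1}$ to exist and for $\tau_b$ to be a Hausdorff topology on $\cO'_{C,\omega}(\R^N)$; this is immediate, since $(T\star f)(0)=\langle T,\check f\rangle$ shows $C$ is injective, and multiplication by the nowhere-vanishing Gaussian shows $M$ is.
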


\begin{proof} Let $\{T_i\}_{i\in I}\subset \cO'_{C,\omega}(\R^N)$ be a  $\tau_b$-convergent net in  $\cO'_{C,\omega}(\R^N)$ with limit equal to $T\in  \cO'_{C,\omega}(\R^N)$.  Then  $\cF(T_i\star g)\to \cF(T\star g)$ in $\cS_\omega(\R^N)$ uniformly on bounded subsets of $\cS_\omega(\R^N)$. On the other hand,  by Theorem \ref{omoc} the functions $f_i:=\cF(T_i)$, for $i\in I$, and $f:=\cF(T)$, belong to  $\cO_{M,\omega}(\R^N)$.  Moreover, for every $i\in I$, by \eqref{trf} we have for every $g\in \cS_\omega(\R^N)$ that 
	\begin{align}\label{limit}
	&\cF|_{\cS_\omega(\R^N)}(T_i\star g)=\cF(T_i\star g)=\hat{g}\cF(T_i), \nonumber \\ &\cF|_{\cS_\omega(\R^N)}(T\star g)=\cF(T\star g)=\hat{g}\cF(T),
	\end{align}
	where the restriction $\cF|_{\cS_\omega(\R^N)}$ of $\cF\colon \cS'_\omega(\R^N)\to \cS'_\omega(\R^N)$ coincides with the Fourier transfom on $\cS_\omega(\R^N)$, which is a topological isomorphism on $\cS_\omega(\R^N)$. So, $\cF(T_i\star g)\to \cF(T\star g)$ in $\cS_\omega(\R^N)$ uniformly on bounded subsets of $\cS_\omega(\R^N)$ if and only if  $\cF|_{\cS_\omega(\R^N)}(T_i\star g)\to \cF|_{\cS_\omega(\R^N)}(T\star g)$ in $\cS_\omega(\R^N)$. We now observe that  for a subset $B$ of $\cS_\omega(\R^N)$, the set  $\{\hat{g}\colon g\in B\}\in \cB(\cS_\omega(\R^N))$ if and only if $B\in \cB(\cS_\omega(\R^N))$. In view of the equalities in \eqref{limit}, it follows that $f_i\to f$ in $\cO_{M,\omega}(\R^N)$. In a similar way one shows that if $f_i\to f$ in $\cO_{M,\omega}(\R^N)$, then $\tau_b$-$\lim T_i=T$.
	\end{proof}

\end{document}